\documentclass[a4paper,10pt,leqno]{amsart}
        \title{Splitting the relative assembly map, Nil-terms and  involutions}

\author{Wolfgang L\"uck}
\email{wolfgang.lueck@him.uni-bonn.de}
       \urladdr{http://www.math.uni-muenster.de/u/lueck}
\address{Mathematisches Institut der Universit\"at Bonn\\
                Endenicher Allee 60\\
                53115 Bonn, Germany}
\author{Wolfgang Steimle}
\email{steimle@math.uni-leipzig.de}
	  \urladdr{http://www.math.uni-leipzig.de/~steimle}  
\address{Mathematisches Institut\\
				Universit\"at Leipzig\\
				Postfach 10 09 20,
				D-04009 Leipzig, Germany}

       \date{September, 2015}
  
       \keywords{splitting relative $K$-theoretic assemby maps, 
       rational vanishing and Tate cohomology of the relative Nil-term}       
       \subjclass[2000]{18F25,19A31,19B28,19D35}

\usepackage[active]{srcltx}
\usepackage{pdfsync}
\usepackage{color}
\usepackage{hyperref}
\usepackage{calc}
\usepackage{enumerate,amssymb}
\usepackage{graphicx}
\usepackage[arrow,curve,matrix,tips,2cell]{xy}
  \SelectTips{eu}{10} \UseTips
  \UseAllTwocells
\usepackage{tikz}
\usetikzlibrary{decorations.pathmorphing,snakes}

\DeclareMathAlphabet{\matheurm}{U}{eur}{m}{n}

\newcommand{\addcat}{\matheurm{Add\text{-}Cat}}

\newcommand{\Zcat}{\matheurm{\IZ\text{-}Cat}}

\newcommand{\Groupoids}{\matheurm{Groupoids}}

\newcommand{\Or}{\matheurm{Or}}
\newcommand{\Spaces}{\matheurm{Spaces}}
\newcommand{\Spectra}{\matheurm{Spectra}}
\newcommand{\Sets}{\matheurm{Sets}}

\DeclareMathOperator{\asmb}{asmb}

\DeclareMathOperator{\cent}{cent}

\DeclareMathOperator{\cyl}{cyl}

\DeclareMathOperator{\ev}{ev}

\DeclareMathOperator{\gen}{gen}

\DeclareMathOperator{\id}{id}

\DeclareMathOperator{\ind}{ind}

\DeclareMathOperator{\map}{map}
\DeclareMathOperator{\mor}{mor}

\DeclareMathOperator{\op}{op}

\DeclareMathOperator{\pr}{pr}

\DeclareMathOperator{\tors}{tors}

\newcommand{\Fin}{{\mathcal{F}\text{in}}}

\newcommand{\VCyc}{{\mathcal{VC}}}

  \newcommand{\IQ}{\mathbb{Q}}
  \newcommand{\IR}{\mathbb{R}}

  \newcommand{\IZ}{\mathbb{Z}}

  \newcommand{\cala}{\mathcal{A}}
  \newcommand{\calb}{\mathcal{B}}
  \newcommand{\calc}{\mathcal{C}}
  \newcommand{\cald}{\mathcal{D}}
  
  \newcommand{\calf}{\mathcal{F}}
  \newcommand{\calg}{\mathcal{G}}

  \newcommand{\calm}{\mathcal{M}}

  \newcommand{\cals}{\mathcal{S}}

  \newcommand{\bfa}{{\mathbf a}}
  
  \newcommand{\bfb}{{\mathbf b}}

  \newcommand{\bfE}{{\mathbf E}}

  \newcommand{\bfi}{{\mathbf i}}
  
  \newcommand{\bfj}{{\mathbf j}}
  \newcommand{\bfK}{{\mathbf K}}

  \newcommand{\bfl}{{\mathbf l}}

  \newcommand{\bfu}{{\mathbf u}}

\newcommand{\bfEunderbar}{{\underline{\mathbf E}}}
\newcommand{\bfKunderbar}{{\underline{\mathbf K}}}

\newcommand{\NK}{N\!K}

\newcommand{\bfNK}{{\mathbf N}{\mathbf K}}

\newcommand{\inv}{^{-1}}

\newcounter{commentcounter}


\theoremstyle{plain}
\newtheorem{theorem}{Theorem}[section]

\newtheorem{lemma}[theorem]{Lemma}

\newtheorem{definition}[theorem]{Definition}
\theoremstyle{definition}

\newtheorem{remark}[theorem]{Remark}
\newtheorem{notation}[theorem]{Notation}

\theoremstyle{remark}

\newtheorem*{remark*}{Remark}

\makeatletter\let\c@equation=\c@theorem\makeatother

\hyphenation{equi-variant}

\newcommand{\version}[1]                       
{\begin{center} last edited on #1\\
last compiled on \today\\
name of texfile: \jobname
\end{center}
}


\newcommand{\EGF}[2]{E_{#2}(#1)}

\newcommand{\FGF}[1]{#1\text{-}\matheurm{FGF}}

\newcommand{\intgf}[2]{\int_{#1} #2}

\newcommand{\OrG}[1]{\matheurm{Or}(#1)}
\newcommand{\OrGF}[2]{\matheurm{Or}_{#2}(#1)}

\newcommand{\xycomsquare}[8]                
{$$\xymatrix{#1 \ar[r]^-{#2} \ar[d]^{#4} &
    #3 \ar[d]^{#5}  \\
    #6\ar[r]^-{#7} & #8 }$$}


\begin{document}

\begin{abstract}
  We show that the relative Farrell-Jones assembly map from the family of finite
  subgroups to the family of virtually cyclic subgroups for algebraic $K$-theory
  is split injective in the setting where the coefficients are additive
  categories with group action.  This generalizes a result of Bartels for rings
  as coefficients. We give an explicit description of the relative term.  This
  enables us to show that it vanishes rationally if we take coefficients in a
  regular ring.  Moreover, it is, considered as a $\IZ[\IZ/2|$-module by the
  involution coming from taking dual modules, an extended module and in
  particular all its Tate cohomology groups vanish, provided that  the infinite virtually
  cyclic subgroups of type~I of G are orientable.  The latter condition is for
  instance satisfied for torsionfree hyperbolic groups.
\end{abstract}

\maketitle


\section*{Introduction}


\subsection{Motivation}
\label{subsec:Motivation}

The \emph{$K$-theoretic Farrell-Jones Conjecture} for a group $G$ and a ring $R$
predicts that the \emph{assembly map}
\[
\asmb_n \colon H_n^G(\underline{\underline{E}}G;\bfK_R) \to H_n^G(G/G;\bfK_R) =
K_n(RG)
\]
is an isomorphism for all $n \in \IZ$. Here $\underline{\underline{E}}G =
\EGF{G}{\VCyc}$ is the \emph{classifying space for the family $\VCyc$ of
  virtually cyclic subgroups} and $H_n^G(-;\bfK_R^G)$ is the 
\emph{$G$-homology theory} associated to a specific covariant functor $\bfK^G_R$ 
from the orbit category $\Or(G)$ to the category of spectra $\Spectra$.  It satisfies
$H_n^G(G/H;\bfK_R^G) = \pi_n(\bfK^G(G/H)) = K_n(RH)$ for any subgroup 
$H \subseteq G$ and $n \in \IZ$.  The assembly map is induced by the projection
$\underline{\underline{E}}G \to G/G$.  The original source for the Farrell-Jones
Conjecture is the paper by 
Farrell-Jones~\cite[1.6 on page~257 and~1.7 on page~262]{Farrell-Jones(1993a)}. 
More information about the Farrell-Jones
Conjecture and the classifying spaces for families can be found for instance in
the survey articles~\cite{Lueck(2005s)} and~\cite{Lueck-Reich(2005)}.

Let $\underline{E}G = \EGF{G}{\Fin}$ be the \emph{classifying space for the
  family $\Fin$ of finite subgroups}, sometimes also called the
\emph{classifying space for proper $G$-actions}. The 
$G$-map $\underline{E}G \to \underline{\underline{E}}G$, which is unique up to $G$-homotopy, induces a so-called
\emph{relative assembly map}
\[
\overline{\asmb}_n \colon H_n^G(\underline{E}G;\bfK_R)  
\to H_n^G(\underline{\underline{E}}G;\bfK_R).
\]
The main result of a paper by Bartels~\cite[Theorem~1.3]{Bartels(2003b)} says that
$\overline{\asmb}_n$ is split injective for all $n \in \IZ$.

In this paper we improve on this
result in two different directions: First we generalize from the context of rings $R$ to
the context of additive categories $\cala$ with $G$-action. This improvement allows to
consider twisted group rings and involutions twisted by an orientation homomorphism $G \to
\{\pm 1\}$; moreover one obtains better inheritance properties and gets fibered versions
for free.

Secondly, we give an explicit description of the relative term in terms of so-called
NK-spectra. This becomes relevant for instance in the study of the involution on the
cokernel of the relative assembly map induced by an involution of $\cala$. In more detail,
we prove:


\subsection{Splitting the relative assembly map}
\label{subsec:Splitting_the_relative_assembly_map}

Our main splitting result is

\begin{theorem}[Splitting the $K$-theoretic assembly map from $\Fin$ to $\VCyc$]
\label{the:Splitting_the_K-theoretic_assembly_map_from_Fin_to_VCyc}
Let $G$ be a group and let $\cala$ be an additive $G$-category. Let $n$ be any integer.

Then the  relative $K$-theoretic assembly map
\[
\overline{\asmb}_n \colon H_n^G\bigl(\underline{E}G;\bfK_{\cala}^G\bigr) 
\to  H_n^G\bigl(\underline{\underline{E}}G;\bfK_{\cala}^G\bigr)
\]
is split injective. In particular we obtain a natural splitting
\[
H_n^G\bigl(\underline{\underline{E}}G;\bfK_{\cala}^G\bigr)  
\xrightarrow{\cong}  
H_n^G\bigl(\underline{E}G;\bfK_{\cala}^G\bigr) 
\oplus H_n^G\bigl(\underline{E}G \to \underline{\underline{E}}G;\bfK^G_{\cala}\bigr).
\]
Moreover, there exists an $\Or(G)$-spectrum $\bfNK^G_\cala$ and a natural isomorphism
\[H_n^G\bigl(\underline{E}G \to \EGF{G}{\VCyc_I};\bfNK^G_{\cala}\bigr) 
\xrightarrow{\cong} 
H_n^G\bigl(\underline{E}G \to \underline{\underline{E}}G;\bfK_{\cala}^G\bigr).
\]
\end{theorem}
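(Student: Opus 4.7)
The plan is to construct the splitting map and the spectrum $\bfNK^G_{\cala}$ simultaneously at the level of covariant functors on the orbit category $\Or(G)$, extending Bartels' strategy for rings to the additive-coefficient setting. The organizational device is the intermediate family $\VCyc_I$ of virtually cyclic subgroups that are finite or surject onto $\IZ$. The chain $\Fin \subseteq \VCyc_I \subseteq \VCyc$ factors the relative assembly map as
\[
H_n^G\bigl(\underline{E}G; \bfK^G_\cala\bigr) \to H_n^G\bigl(\EGF{G}{\VCyc_I}; \bfK^G_\cala\bigr) \to H_n^G\bigl(\underline{\underline{E}}G; \bfK^G_\cala\bigr),
\]
and I would treat the two stages separately.

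For the passage $\VCyc_I \to \VCyc$, a cellular reduction in the orbit variable allows one to restrict attention to the stabilizers, i.e.\ to each virtually cyclic subgroup $V$ of type II, written as an amalgamated product $V = G_1 *_F G_2$ with $[G_i : F] = 2$. A version of Waldhausen's fundamental theorem for generalized free products, adapted to additive categories with group action, shows that $\bfK^G_\cala(G/V)$ agrees with the $\VCyc_I \cap V$-assembly of $V$; equivalently, every Waldhausen Nil-summand contributing to $K_n(\cala \ast V)$ is already accounted for by a type~I subgroup of $V$. Consequently $\EGF{G}{\VCyc_I} \to \underline{\underline{E}}G$ is a $\bfK^G_\cala$-equivalence.

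For the passage $\Fin \to \VCyc_I$, the input is the Bass--Heller--Swan fundamental theorem applied, for each $V = F \rtimes_\phi \IZ \in \VCyc_I$, to the additive category underlying $\cala$ together with the $\phi$-twisted auto-equivalence. This yields a natural spectrum-level decomposition of $\bfK^G_\cala(G/V)$ into the image of the assembly over $\underline{E}V$ and a Nil-type complement. I would define $\bfNK^G_\cala$ on $\Or(G)$ to be this Nil complement on orbits $G/V$ with $V$ of type~I, and a contractible spectrum on orbits with finite stabilizer. Assembling these spectrum-level splittings over the orbit category through standard homotopy colimit manipulations produces a splitting of the relative assembly from $\underline{E}G$ to $\EGF{G}{\VCyc_I}$; combined with the equivalence of the previous step, this yields split injectivity of $\overline{\asmb}_n$ together with the desired identification of the cofibers via the long exact sequence of the pair.

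The hard part, and where the technical bulk of the argument lies, is establishing coherent naturality in the orbit variable: the Bass--Heller--Swan splitting has to be upgraded from a statement about individual virtually cyclic groups $V$ to one of covariant $\Or(G)$-spectra. Morphisms in $\Or(G)$ mix inclusions, conjugations, and projections, and one must check that the Nil summand is functorial under all of them, independent of the particular presentation $V = F \rtimes \IZ$ chosen, and compatible with the $G$-action on $\cala$. Once this coherence has been secured, the remaining manipulations are routine operations with equivariant homology theories built from $\Or(G)$-spectra.
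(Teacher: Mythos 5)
Your proposal follows essentially the same architecture as the paper: factor the relative assembly map through the intermediate family $\VCyc_I$, handle the passage $\Fin \to \VCyc_I$ by a coherent, orbit-category version of the twisted Bass--Heller--Swan theorem, and dispose of the passage $\VCyc_I \to \VCyc$ by a theorem to the effect that type~II virtually cyclic groups contribute nothing beyond what is already seen in $\VCyc_I$. Two remarks on the details.

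For the step $\VCyc_I \to \VCyc$ you appeal to a Waldhausen-style Mayer--Vietoris/Nil analysis for the amalgamated decomposition $V = G_1 \ast_F G_2$. The paper instead quotes the theorem of Davis--Quinn--Reich (Theorem~\ref{the:Passage_from_Vcyc_I_to_Vcyc_K-theory} here, citing their Remark~1.6) as a black box. Your formulation --- that the Waldhausen Nil summands in $K$ of a type~II group are already accounted for by type~I subgroups --- is precisely what that theorem asserts; but it is not a direct corollary of Waldhausen's splitting, since Waldhausen Nils and the Bass Nils of the type~I subgroups are a~priori different objects. You would need to cite (or reprove) the Davis--Quinn--Reich comparison; simply invoking Waldhausen's generalized free product theorem does not yield the required isomorphism. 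In particular you should be aware that the coefficients are additive $G$-categories rather than rings, so the cited result must hold in that generality, as it does in the Davis--Quinn--Reich controlled-topology framework.

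You are right that the genuine technical content is the coherence of the Bass--Heller--Swan splitting in the orbit variable: the paper spends Sections~\ref{sec:Some_categories_attached_to_homogeneous_spaces}--\ref{sec:Some_K-theory-spectra_over_the_orbit_category} setting up transport groupoids, homotopy colimits of additive categories, and the $\OrGF{G}{\VCyc_I}$-spectrum $\bfNK^G_\cala$ precisely to make the local splittings (Lemma~\ref{bfa_wedge_bfb_weak_equiv}) natural under all three kinds of morphisms in $\Or(G)$; the functoriality in the generator, via the bijection $\gen(f)$, is exactly what resolves the dependence on a chosen presentation $V \cong K_V \rtimes \IZ$. One small point: you declare $\bfNK^G_\cala$ ``on $\Or(G)$'' by specifying values on finite and type~I stabilizers. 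This actually defines it only on $\OrGF{G}{\VCyc_I}$; the paper extends it to all of $\Or(G)$ via the coinduction functor along $\OrGF{G}{\VCyc_I} \hookrightarrow \Or(G)$. This is a cosmetic rather than a substantive gap, but worth fixing to match the stated conclusion.
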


Here $\EGF{G}{\VCyc_I}$ denotes the classifying space for the family of virtually cyclic subgroups of type I, see section \ref{sec:virtually_cyclic_groups}. The proof will appear in
Section~\ref{sec:Splitting_the_relative_assembly_map_and_identifying_the_relative_term}.
The point is that, instead of considering $\bfK^G_R$ for a ring $R$, we can treat the more
general setup $\bfK^G_{\cala}$ for an additive $G$-category $\cala$, as
explained in~\cite{Bartels-Lueck(2009coeff)}
and~\cite{Bartels-Reich(2007coeff)}.  (One obtains the case of a ring $R$ back
if one considers for $\cala$ the category $\FGF{R}$ of finitely generated free
$R$-modules with the trivial $G$-action. 
Notice that we tacitly always apply  idempotent completion to the additive categories before taking $K$-theory.)  
Whereas in~\cite[Theorem~1.3]{Bartels(2003b)} just a splitting is constructed,
we construct explicit $\Or(G)$-spectra $\bfNK^G_{\cala}$ and identify the
relative terms. This is crucial for the following results.


\subsection{Involutions and vanishing of Tate cohomology}
\label{subsec:Involutions_and_vanishing_of_Tate_cohomology}

We will prove in Subsection~\ref{subsec:Proof_of_Theorem_ref(the:induced_Nil_term_on_vcyc_I)}

\begin{theorem}[The relative term is
  induced] \label{the:induced_Nil_term_on_vcyc_I} Let $G$ be a group and let
  $\cala$ be an additive $G$-category with involution. Suppose that the
  virtually cyclic subgroups of type~I of $G$ are orientable (see
  Definition~\ref{def:orientable_cyclic_subgroups}).
  
  Then the $\IZ/2$-module $H_n\bigl(\underline{E}G \to
  \underline{\underline{E}}G;\bfK^G_{\cala}\bigr)$ is isomorphic to $\IZ[\IZ/2]
  \otimes_{\IZ} A$ for some $\IZ$-module $A$.
\end{theorem}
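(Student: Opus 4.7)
The plan is to invoke the identification provided by Theorem~\ref{the:Splitting_the_K-theoretic_assembly_map_from_Fin_to_VCyc}, so that it suffices to analyze the $\IZ/2$-module structure on $H_n^G(\underline{E}G \to \EGF{G}{\VCyc_I}; \bfNK^G_{\cala})$. A cellular pushout description of $\EGF{G}{\VCyc_I}$ relative to $\underline{E}G$, indexed by conjugacy classes of maximal infinite virtually cyclic subgroups of type~I, then decomposes the relative homology as a direct sum of contributions, one for each such subgroup $V \subseteq G$, equivariantly induced from its normalizer $N_G V$. Because this decomposition is natural, the involution respects the splitting and it suffices to prove the statement for each individual $V$-summand.

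Next, fix such a $V$ and use the given surjection $V \twoheadrightarrow \IZ$ to write $V = F \rtimes_\phi \IZ$ with $F$ finite. By the construction of $\bfNK^G_{\cala}$ carried out earlier, the $V$-summand is modelled, at the spectrum level, by the classical Nil-spectrum associated to the additive $F$-category twisted by $\phi$. A Bass-Heller-Swan/Waldhausen splitting produces a canonical wedge decomposition $\bfNK^+_V \vee \bfNK^-_V$, where the two factors correspond to the two ends of $\IZ$; taking $\pi_n$ gives an abelian-group splitting $A^+_V \oplus A^-_V$ of the $V$-contribution.

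The key step is the analysis of the involution on this sum. The involution on $\cala$, combined with an inversion element of $V$ lying above $-1\in\IZ$, produces the $\IZ/2$-action; inversion of $\IZ$ interchanges the two ends and therefore \emph{wants} to exchange $A^+_V$ and $A^-_V$. The orientability hypothesis (Definition~\ref{def:orientable_cyclic_subgroups}) is precisely what ensures that an inversion element $v_V \in V$ can be chosen so that the conjugation twist it induces on $F$ combines with the categorical involution on $\cala$ to give an honest isomorphism $A^+_V \xrightarrow{\cong} A^-_V$ realizing the swap. Hence each $V$-summand takes the form $\IZ[\IZ/2] \otimes_{\IZ} A^+_V$, and summing over all conjugacy classes $[V]$ yields the theorem.

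The main obstacle I expect is step three: controlling the involution at the spectrum level, passing through both the explicit $\bfNK^G_{\cala}$-model of Theorem~\ref{the:Splitting_the_K-theoretic_assembly_map_from_Fin_to_VCyc} and the BHS-type splitting, so that the action of $v_V$ becomes a bona fide interchange of wedge factors rather than merely a self-equivalence of the pair (which in general might differ from the interchange by an inner automorphism of $F$ or by a duality twist on $\cala$). Pinning down the orientation condition so that these potential discrepancies cancel is precisely the substance of the argument; once this is achieved, the remaining combinatorics of reassembling the $V$-summands into the full relative term is formal.
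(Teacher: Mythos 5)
There is a genuine gap. Your step two—decomposing the relative homology via a pushout indexed by conjugacy classes of maximal infinite virtually cyclic subgroups of type~I—invokes the L\"uck--Weiermann construction (Corollary~2.8 of~\cite{Lueck-Weiermann(2012)}), which requires the condition $(M_{\Fin\subseteq\VCyc_I})$: that every infinite virtually cyclic subgroup of type~I is contained in a \emph{unique} maximal one. This is an additional hypothesis on $G$ that is not part of Theorem~\ref{the:induced_Nil_term_on_vcyc_I}; orientability does not imply it. Section~\ref{sec:On_the_computation_of_the_relative_term} of the paper does discuss exactly this pushout decomposition for further computations, but explicitly under the extra assumption $(M_{\Fin\subseteq\VCyc_I})$. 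So your argument, as written, proves only a special case of the theorem. A secondary issue in the same step is that naturality of the decomposition with respect to the $\IZ/2$-action needs justification: the pushout depends on a choice of representatives, and one must check that the involution on $K$-theory genuinely respects the resulting direct-sum splitting before one can argue summand-by-summand.

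The more significant point is that the whole decomposition step is unnecessary, and the paper avoids it by working one level earlier, at the coefficient $\Or(G)$-spectrum. The functor $\bfNK^G_\cala$ is by construction a wedge over $\gen(Q_V)$, and the $\IZ/2$-action introduced in Section~\ref{sec:Involutions_and_vanishing_of_Tate_cohomology} permutes the two wedge summands via $\sigma\mapsto\sigma^{-1}$. An orientation is exactly a consistent choice of one generator $\sigma_V$ per $V$, compatible with all morphisms in $\OrGF{G}{\VCyc_I}$; this allows one to define a one-summand functor $\bfNK^{G'}_\cala$ and to write a $\IZ/2$-equivariant weak equivalence $\bfNK^{G'}_\cala\wedge(\IZ/2)_+\xrightarrow{\simeq}\bfNK^G_\cala$ of $\OrGF{G}{\VCyc_I}$-spectra. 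Since smashing with $(\IZ/2)_+$ commutes with $O^G(-)_+\wedge_{\OrGF{G}{\VCyc_I}}(-)$ and $\IZ/2$ acts freely on $(\IZ/2)_+$, passing to homotopy groups yields immediately that $H_n^G(\underline{E}G\to\EGF{G}{\VCyc_I};\bfNK^G_\cala)$ is extended from $\IZ$, with no case analysis on the subgroup lattice. Your ``key step'' correctly identifies that orientability is what makes the swap an honest interchange rather than a twisted self-map; this is the same idea as in the paper, but realizing it at the coefficient level both shortens the argument and removes the unwarranted hypothesis. Finally, note you should also cite Lemma~\ref{lem:bfb_compatible_with_involutions} when reducing from $\bfK^G_\cala$ to $\bfNK^G_\cala$-homology, since you need the splitting isomorphism of Theorem~\ref{the:Splitting_the_K-theoretic_assembly_map_from_Fin_to_VCyc} to be $\IZ/2$-equivariant.
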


In~\cite{Farrell-Lueck-Steimle(2015)} we will be interested in the conclusion of Theorem~\ref{the:induced_Nil_term_on_vcyc_I}
that the Tate cohomology groups  
$\widehat{H}^i\left(\IZ/2,H_n\bigl(\underline{E}G \to \underline{\underline{E}}G;\bfK^G_{\cala}\bigr)\right)$ 
vanish for all $i,n \in \IZ$  if the virtually cyclic
subgroups of type of I  of $G$ are orientable. In general the Tate spectrum of the involution on the 
Whitehead spectrum plays a role in the study of automorphisms of manifolds (see e.g.~\cite[section 4]{Weiss-Williams(2000)}).


\subsection{Rational vanishing of the relative term}
\label{subsec:Rational_vanishing_of_the_relative_term}

\begin{theorem}\label{the:relativ_assembly_Fin_to_VCyc_rational_bijective_K-theory}
Let  $G$ be a group and let $R$ be a regular ring. 

Then the relative assembly map
\[
\overline{\asmb}_n\colon  H_n^G(\underline{E}G;\bfK^G_R) 
\to H_n^G(\underline{\underline{E}}G;\bfK^G_R) 
\]
is  rationally bijective for all $n \in \IZ$.
\end{theorem}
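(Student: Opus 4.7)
The plan is to combine the identification of the relative term provided by Theorem~\ref{the:Splitting_the_K-theoretic_assembly_map_from_Fin_to_VCyc} with the classical rational vanishing of Nil-groups of finite group rings over regular rings.

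Applying Theorem~\ref{the:Splitting_the_K-theoretic_assembly_map_from_Fin_to_VCyc} to $\cala = \FGF{R}$ with trivial $G$-action yields an integral splitting of $\overline{\asmb}_n$ and identifies its cokernel naturally with the relative equivariant homology group $H_n^G\bigl(\underline{E}G \to \EGF{G}{\VCyc_I};\bfNK^G_R\bigr)$. Rational bijectivity of $\overline{\asmb}_n$ is thus equivalent to the rational vanishing of this relative term.

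To establish the latter I would use the equivariant Atiyah--Hirzebruch spectral sequence to reduce the question to single orbits. The pair $(\EGF{G}{\VCyc_I},\underline{E}G)$ is a relative $G$-CW-complex whose relative equivariant cells all have isotropy equal to an infinite virtually cyclic subgroup of type~I. The $E^2$-page of the spectral sequence converging to the above relative homology is accordingly built out of the homotopy groups $\pi_q\bigl(\bfNK^G_R(G/V)\bigr)$ with $V$ ranging over such subgroups, and so it suffices to show that each of these groups vanishes rationally. Writing $V \cong F \rtimes_\phi \IZ$ with $F$ finite and $\phi$ an automorphism of $F$, the construction of the $\Or(G)$-spectrum $\bfNK^G_R$ in the proof of Theorem~\ref{the:Splitting_the_K-theoretic_assembly_map_from_Fin_to_VCyc} identifies $\pi_q\bigl(\bfNK^G_R(G/V)\bigr)$ with a direct sum of twisted Bass Nil-groups $NK_q(RF;\phi)$ appearing in the Bass--Heller--Swan type decomposition of $K_q(RV)$.

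The final and main step is the rational vanishing $\IQ \otimes_{\IZ} NK_q(RF;\phi) = 0$ for $R$ regular and $F$ finite. This is a standard result in the algebraic $K$-theory of group rings: combining Weibel's module structure on the Nil-groups over the big Witt vectors $W(R)$ with a transfer argument shows that multiplication by $|F|$ is invertible on the rationalized Nil-groups, so that one may pass to the regular ring $R[\tfrac{1}{|F|}][F]$ and invoke Bass--Heller--Swan. This is the principal external input and therefore the main obstacle, but it is by now well-established in the literature; up to it, the proof is a formal combination of Theorem~\ref{the:Splitting_the_K-theoretic_assembly_map_from_Fin_to_VCyc} with the equivariant Atiyah--Hirzebruch spectral sequence.
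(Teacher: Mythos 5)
Your overall strategy matches the paper's proof through the reduction steps: apply Theorem~\ref{the:Splitting_the_K-theoretic_assembly_map_from_Fin_to_VCyc} to trade the cokernel of $\overline{\asmb}_n$ for $H_n^G\bigl(\underline{E}G \to \EGF{G}{\VCyc_I};\bfNK^G_R\bigr)$, run the equivariant (Bredon/Atiyah--Hirzebruch) spectral sequence plus flatness of $\IQ$ to reduce to single orbits, and identify $\pi_q\bigl(\bfNK^G_R(G/V)\bigr)$ with a sum of twisted Bass Nil-groups $\NK_q(RK_V,R\phi)\oplus \NK_q(RK_V,R\phi^{-1})$. Up to this point your argument is the paper's argument.

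The gap is in your final step, which is also where the paper has to do real work. You want $\IQ\otimes_\IZ \NK_q(RF;\phi)=0$ for $F$ finite, $\phi$ an automorphism of $F$, and $R$ regular, and you propose to get it from Weibel's Witt-vector module structure, a transfer, and a passage to $R[1/|F|][F]$. Two problems. First, the Nil-groups here are \emph{twisted}; the statements you are implicitly citing (Weibel's module structure argument, and the Hambleton--L\"uck-style result that $\NK_n(RF)[1/|F|]=0$ for $R$ regular) are for the \emph{untwisted} $\NK_n(RF)$, and you have not explained how to remove the twist. Second, ``pass to the regular ring $R[1/|F|][F]$'' quietly assumes that inverting $|F|$ in the coefficient ring is compatible with forming (twisted) Nil-groups, which is not automatic and is not what the transfer argument gives you.

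The paper closes exactly this gap with a separate result, Theorem~\ref{the:vanishing_of_widetilde(NK)(RK,Rphi)_for_finite_K_rationally}, proved by an untwisting trick: letting $s$ be the order of $\phi$, embed $F$ in the finite group $F\rtimes_\phi\IZ/s$, where $\phi$ becomes conjugation by $t$; then by Lemma~\ref{lem_NIL_for_inner_automorphism} the twisted Nil of $F\rtimes_\phi\IZ/s$ by this inner automorphism is isomorphic to the \emph{untwisted} $\NK_n\bigl(R[F\rtimes_\phi\IZ/s]\bigr)$, and by the induction/restriction Lemma~\ref{lem:induction_and_restriction_on_widetilde(NK)} one has $i^*i_* = s\cdot\id$ on $\NK_n(RF,R\phi)$, so $\NK_n(RF,R\phi)[1/s]$ is a direct summand of $\NK_n\bigl(R[F\rtimes_\phi\IZ/s]\bigr)[1/s]$. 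Only then is the untwisted vanishing theorem of Hambleton--L\"uck \cite[Theorem~A]{Hambleton-Lueck(2012)} applied to the larger finite group. So your proposal needs to be supplemented by this untwisting-by-inner-automorphism step; without it, the reduction to the untwisted, $|F|$-inverted case is not justified.
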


If $R = \IZ$ and $n \le -1$, then
the relative assembly map $H_n^G(\underline{E}G;\bfK^G_{\IZ})
\xrightarrow{\cong} H_n^G(\underline{\underline{E}}G;\bfK^G_{\IZ})$ is an
isomorphism by the results of~\cite{Farrell-Jones(1995)}.

Further computations of the relative term 
$H_n^G\bigl(\underline{E}G \to \EGF{G}{\VCyc_I};\bfNK^G_{\cala}\bigr) \cong
H_n^G\bigl(\underline{E}G \to \underline{\underline{E}}G;\bfK_{\cala}^G\bigr)$ 
are briefly discussed in
Section~\ref{sec:On_the_computation_of_the_relative_term}.


\subsection{A fibered case}
\label{subsec:a_fibered_case}
In Section~\ref{sec:fibered_version} we discuss a fibered situation which will be
relevant for the forthcoming paper~\cite{Farrell-Lueck-Steimle(2015)} and can
be handled by our general treatment for additive $G$-categories.


\subsection{Acknowledgement}
\label{subsec:acknowledgement}
This paper has been financially supported  by the Leibniz-Award of the first author
and the Center for Symmetry and Deformation at the University of Copenhagen.
We thank the referee for her/his careful reading and useful comments.


\section{Virtually cyclic groups}
\label{sec:virtually_cyclic_groups}

A virtually cyclic group $V$ is called \emph{of type~I} if it admits an
epimorphism to the infinite cyclic group, and \emph{of type~II} if it admits an
epimorphism onto the infinite dihedral group. The statements appearing in the
next lemma are well-known, we insert a proof for the reader's convenience.

\begin{lemma} \label{lem:types_of_virtually_cyclic_groups} Let $V$ be an
  infinite virtually cyclic group.
  \begin{enumerate}

  \item \label{lem:types_of_virtually_cyclic_groups:either_:type_I_or_II} 
     $V$ is either of type~I or of type~II;

  \item \label{lem:types_of_virtually_cyclic_groups:H_1_center_type_I} The
    following assertions are equivalent:

    \begin{enumerate}

    \item \label{lem:types_of_virtually_cyclic_groups:H_1_center_type_I:a} $V$
      is of type~I;

    \item \label{lem:types_of_virtually_cyclic_groups:H_1_center_type_I:b}
      $H_1(V)$ is infinite;

    \item \label{lem:types_of_virtually_cyclic_groups:H_1_center_type_I:c}
      $H_1(V)/\tors(V)$ is infinite cyclic;

    \item \label{lem:types_of_virtually_cyclic_groups:H_1_center_type_I:d} The
      center of $V$ is infinite;

    \end{enumerate}

  \item \label{lem:types_of_virtually_cyclic_groups:max_normal_subgroup:K_V}
    There exists a unique maximal normal finite subgroup $K_V\subseteq V$, i.e.,
    $K_V$ is a finite normal subgroup and every normal finite subgroup of $V$ is
    contained in $K_V$;

  \item \label{lem:types_of_virtually_cyclic_groups:exact_sequence} Let $Q_V :=
    V/K_V$. Then we obtain a canonical exact sequence
    \[
    1 \to K_V \xrightarrow{i_V} V \xrightarrow{p_V} Q_V \to 1.
    \]
    Moreover, $Q_V$ is infinite cyclic if and only if $V$ is of type~I and $Q_V$
    is isomorphic to the infinite dihedral group if and only if $V$ is of
    type~II;

  \item \label{lem:types_of_virtually_cyclic_groups:epimorphisms} Let $f \colon
    V \to Q$ be any epimorphism onto the infinite cyclic group or onto the
    infinite dihedral group.  Then the kernel of $f$ agrees with $K_V$;

  \item \label{lem:types_of_virtually_cyclic_groups:characteristic_exact_sequence}
    Let $\phi \colon V \to W$ be a homomorphism of infinite virtually cyclic
    groups with infinite image.  Then $\phi$ maps $K_V$ to $K_W$ and we obtain
    the following canonical commutative diagram with exact rows
    \[
    \xymatrix{1 \ar[r] & K_V \ar[r]^{i_V} \ar[d]^{\phi_K} & V \ar[r]^{p_V}
      \ar[d]^{\phi} & Q_V \ar[r] \ar[d]^{\phi_Q} & 1
      \\
      1 \ar[r] &  K_W \ar[r]^{i_W}  & W \ar[r]^{p_W}  &  Q_W \ar[r]  &  1 }
\]
with injective $\phi_Q$.
\end{enumerate}
\end{lemma}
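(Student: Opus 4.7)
The plan is to establish first the existence and uniqueness of the maximal finite normal subgroup $K_V$ asserted in (iii), since all other parts of the lemma follow by analyzing the quotient $Q_V=V/K_V$. I would fix an infinite cyclic normal subgroup $C\subseteq V$ of finite index, obtained as the normal core of any cyclic finite-index subgroup. Since $C$ is torsion-free, any finite subgroup $F\subseteq V$ intersects $C$ trivially and therefore injects into $V/C$, yielding the uniform bound $|F|\le [V:C]$. The product $F_1F_2$ of two finite normal subgroups is again finite and normal, so the finite normal subgroups form a system directed by inclusion and bounded in order; it therefore has a unique maximal element $K_V$.

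To prove (iv), and hence also (i), I would show that $Q_V$ is isomorphic to either $\IZ$ or the infinite dihedral group $D_\infty$. By maximality, $Q_V$ contains no nontrivial finite normal subgroup, yet inherits from $V$ an infinite cyclic normal subgroup $C'$ of finite index. Since $\aut(C')=\{\pm 1\}$, the centralizer $Z_{Q_V}(C')$ has index at most two in $Q_V$ and realizes $C'$ as a central subgroup of finite index; its torsion subgroup is characteristic in $Z_{Q_V}(C')$, hence normal in $Q_V$, and so must vanish. This forces $Z_{Q_V}(C')=C'$ infinite cyclic. If $Q_V=C'$ we are in type~I; otherwise $[Q_V:C']=2$, and a direct cocycle computation shows that any lift $t$ of the nontrivial element satisfies $t^2=1$ (since $t$ commutes with $t^2$ while inverting it, and $C'$ is torsion-free), giving $Q_V\cong C'\rtimes \IZ/2\cong D_\infty$. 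Part (v) then follows as a uniqueness statement: for any epimorphism $f\colon V\twoheadrightarrow Q$ onto $\IZ$ or $D_\infty$, the kernel is finite and normal, hence contained in $K_V$, while $f(K_V)$ is a finite normal subgroup of $\IZ$ or $D_\infty$ and must vanish.

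Part (ii) then reduces to inspecting the two possibilities: for type~I, the splitting $V\cong K_V\rtimes\IZ$ (any extension of $\IZ$ splits) visibly yields $H_1(V)\twoheadrightarrow\IZ$, so $H_1(V)$ is infinite with free part infinite cyclic, and an infinite central subgroup is obtained as a sufficiently high power of the $\IZ$-generator, on which the action on $K_V$ becomes trivial; for type~II, the five-term exact sequence of $1\to K_V\to V\to D_\infty\to 1$ together with finiteness of $H_1(D_\infty)$, $H_2(D_\infty)$ and $H_1(K_V)$ shows that $H_1(V)$ is finite, and any central element projects to the trivial center of $D_\infty$ and therefore lies in the finite group $K_V$. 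Finally, for (vi), I would first reduce to the surjective case: the kernel $V_0$ of $\phi$ is finite because $V/V_0\cong\phi(V)$ is infinite virtually cyclic, so $V_0\subseteq K_V$. The cornerstone is the identity $K_{W'}=W'\cap K_W$ for any infinite virtually cyclic subgroup $W'\subseteq W$ of finite index, which I would prove by restricting the epimorphism $p_W$ furnished by (v) to $W'$ (its image is an infinite finite-index subgroup of $\IZ$ or $D_\infty$, hence again $\IZ$ or $D_\infty$) and re-applying (v) to $W'$. This supplies $\phi(K_V)\subseteq K_W$ and hence the commutative diagram; injectivity of $\phi_Q$ follows because $\phi(v)\in K_W\cap\phi(V)=\phi(K_V)$ forces $v\in K_V\cdot V_0=K_V$. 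The main obstacle is precisely this bookkeeping of finite normal subgroups across $V$, $W'$ and $W$ in (vi); once $K_{W'}=W'\cap K_W$ is in place, the remaining assertions are formal.
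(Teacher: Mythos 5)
Your proposal is correct in substance but follows a genuinely different route from the paper's. The paper first proves the equivalences in~\ref{lem:types_of_virtually_cyclic_groups:H_1_center_type_I} directly (using the Lyndon--Hochschild--Serre spectral sequence twice), then~\ref{lem:types_of_virtually_cyclic_groups:max_normal_subgroup:K_V} by showing that passage to a finite quotient is injective on finite normal subgroups, and only then deduces the dichotomy~\ref{lem:types_of_virtually_cyclic_groups:either_:type_I_or_II}/\ref{lem:types_of_virtually_cyclic_groups:exact_sequence} by applying~\ref{lem:types_of_virtually_cyclic_groups:H_1_center_type_I} to an index-two subgroup of $Q_V$. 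You reverse this: you prove~\ref{lem:types_of_virtually_cyclic_groups:max_normal_subgroup:K_V} first by the cleaner observation that torsion subgroups inject into $V/C$ for a torsion-free normal core $C$ (bounding their order uniformly), then classify $Q_V$ directly via centralizers, and only afterwards read off~\ref{lem:types_of_virtually_cyclic_groups:H_1_center_type_I} from the split extensions $K_V\rtimes\IZ$ and $1\to K_V\to V\to D_\infty\to 1$. Your argument for~\ref{lem:types_of_virtually_cyclic_groups:characteristic_exact_sequence} via the auxiliary identity $K_{W'}=W'\cap K_W$ is also correct but more circuitous than the paper's, which simply applies~\ref{lem:types_of_virtually_cyclic_groups:epimorphisms} to $p_W\circ\phi\colon V\to Q_W$ and reads off both $\phi(K_V)\subseteq K_W$ and injectivity of $\phi_Q$ from $\ker(p_W\circ\phi)=K_V$.

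One step in your argument is under-justified: in the analysis of $Q_V$ you assert that the torsion elements of $Z:=Z_{Q_V}(C')$ form a (characteristic) subgroup. This is true, but it is not immediate; you need something like Schur's theorem (center-by-finite implies finite commutator subgroup), after which $[Z,Z]$ is a finite characteristic subgroup of $Z$, hence normal in $Q_V$ and therefore trivial, so $Z$ is abelian and hence $Z\cong\IZ\times F$ with $F$ finite characteristic. As written, the claim that the torsion elements form a subgroup is not self-evident and should be argued. (A cosmetic point: you conclude $Z_{Q_V}(C')=C'$, but in fact one only gets $Z_{Q_V}(C')\cong\IZ$, possibly properly containing $C'$; this is harmless since one may simply replace $C'$ by $Z_{Q_V}(C')$.) Everything else — the uniform bound giving~\ref{lem:types_of_virtually_cyclic_groups:max_normal_subgroup:K_V}, the cocycle computation showing $t^2=1$, the use of the five-term sequence for type~II, and the verification that $\phi_Q$ is injective — is sound.
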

\begin{proof}~\ref{lem:types_of_virtually_cyclic_groups:H_1_center_type_I}
If $V$ is of type~I, then we obtain epimorphisms 
\[
V \to H_1(V) \to H_1(V)/\tors(H_1(V)) \to \IZ.
\]
The kernel of $V \to\IZ$ is finite, since for an exact sequence 
$1 \to \IZ \xrightarrow{i} V \xrightarrow{q} H \to 1$ with finite $H$ the composite of 
$V \to \IZ$ with $i$ is injective and hence the restriction of $q$ to the kernel of
$V \to \IZ$ is injective. This implies that $H_1(V)$ is infinite and
$H_1(V)/\tors(H_1(V))$ is infinite cyclic. If $H_1(V)/\tors(H_1(V))$ is infinite
cyclic or if $H_1(V)$ is infinite, then $H_1(V)$ surjects onto $\IZ$ hence so does $V$. This shows
$\ref{lem:types_of_virtually_cyclic_groups:H_1_center_type_I:a}~\Longleftrightarrow~%
\ref{lem:types_of_virtually_cyclic_groups:H_1_center_type_I:b}~\Longleftrightarrow~%
\ref{lem:types_of_virtually_cyclic_groups:H_1_center_type_I:c}$.

Consider the exact sequence $1 \to \cent(V) \to V \to V/\cent(V) \to 1$, where
$\cent(V)$ is the center of $V$.  Suppose that $\cent(V)$ is infinite. Then $V/\cent(V)$
is finite and the Lyndon-Serre spectral yields
an isomorphism $\cent(V) \otimes_{\IZ} \IQ \to H_1(V;\IQ)$.
Hence $H_1(V)$ is infinite. This shows
$\ref{lem:types_of_virtually_cyclic_groups:H_1_center_type_I:d}~\Longrightarrow~%
\ref{lem:types_of_virtually_cyclic_groups:H_1_center_type_I:b}$.

Suppose that $V$ is of type~I. Choose an exact sequence $1 \to K \to V \to \IZ
\to 1$ with finite $K$.  Let $v \in V$ be an element which is mapped to a generator of $\IZ$.
Then conjugation with $v$ induces an automorphism of $K$. Since $K$ is finite,
we can find a natural number $k$ such that conjugation with $v^k$ induces the
identity on $K$. One easily checks that $v^k$ belongs to the center of $V$ and
$v$ has infinite order. This shows
$\ref{lem:types_of_virtually_cyclic_groups:H_1_center_type_I:a}~\Longrightarrow~%
\ref{lem:types_of_virtually_cyclic_groups:H_1_center_type_I:d}$ and finishes the
proof of assertion~\ref{lem:types_of_virtually_cyclic_groups:H_1_center_type_I}.
\\[2mm]~\ref{lem:types_of_virtually_cyclic_groups:max_normal_subgroup:K_V} If
$K_1$ and $K_2$ are two finite normal subgroups of $V$, then
\[
K_1 \cdot K_2 := \{v \in V \mid \exists k_1 \in K_1 \;\text{and} \;  k_2 \in K_2
\;\text{with}\;  v = k_1k_2\}
\] 
is a finite normal subgroup of $V$. Hence we are left to show that 
$V$ has only finitely many different finite normal subgroups. 

To see this, choose an exact sequence $1 \to \IZ
\xrightarrow{i} V \xrightarrow{f} H \to 1$ for some finite group $H$. The map $f$ induces 
a map from the finite normal subgroups of $V$ to the normal subgroups of $H$; 
we will show that it is an injection. Let $t
\in V$ be the image under $i$ of some generator of $\IZ$ and consider two finite
normal subgroups $K_1$ and $K_2$ of $V$ with $f(K_1) = f(K_2)$. Consider $k_1\in
K_1$. We can find $k_2 \in K_2$ and $n \in \IZ$ with $k_2 = k_1 \cdot t^n$.
Then $t^n$ belongs to the finite normal subgroup $K_1 \cdot K_2$. This implies
$n = 0$ and hence $k_1 = k_2$.  This shows $K_1 \subseteq K_2$. By symmetry we
get $K_1 = K_2$.  Since $H$ contains only finitely many subgroups, we conclude
that there are only finitely many different finite normal subgroups in $V$.  Now
assertion~\ref{lem:types_of_virtually_cyclic_groups:max_normal_subgroup:K_V}
follows.
\\[1mm]~\ref{lem:types_of_virtually_cyclic_groups:either_:type_I_or_II}
and~\ref{lem:types_of_virtually_cyclic_groups:exact_sequence} Let $V$ be an
infinite virtually cyclic group. Then $Q_V$ is an infinite virtually cyclic
subgroup which does not contain a non-trivial finite normal subgroup. There exists an
exact sequence $1 \to \IZ \xrightarrow{i} Q_V \xrightarrow{f} H \to 1$ for some
finite group $H$.  There exists a subgroup of index at most two $H' \subseteq H$ such
that the conjugation action of $H$ on $\IZ$ restricted to $H'$ is trivial. Put
$Q_V' = f^{-1}(H')$.  Then the center of $Q_V'$ contains $i(\IZ)$ and hence is
infinite. By
assertion~\ref{lem:types_of_virtually_cyclic_groups:H_1_center_type_I} we can
find an exact sequence $1 \to K \to Q_V'\xrightarrow{f} \IZ \to 1$ with finite $K$.  
The group $Q_V'$ contains a unique maximal normal finite subgroup $K'$ by
assertion~\ref{lem:types_of_virtually_cyclic_groups:max_normal_subgroup:K_V}.
This implies that $K' \subseteq Q_V'$ is characteristic. Since $Q_V'$ is a
normal subgroup of $Q_V$, $K' \subseteq Q_V$ is a normal subgroup and therefore
$K'$ is trivial. Hence $Q_V'$ contains no non-trivial finite normal
subgroup. This implies that $Q_V'$ is infinite cyclic.  Since $Q_V'$ is a normal
subgroup of index $2$ in $Q_V$ and $Q_V$ contains no non-trivial finite normal
 subgroup, $Q_V$ is infinite cyclic or $D_{\infty}$.

In particular we see that every infinite virtual cyclic group is of type~I or of
type~{II}.  It remains to show that  an infinite virtually cyclic group $V$
which is of type~II cannot be of type~I.  If $1 \to K \to V \to D_{\infty} \to
1$ is an extension with finite $K$, then we obtain from the Lyndon-Serre
spectral sequence an exact sequence $H_1(K) \otimes_{\IZ Q} \IZ \to H_1(V) \to
H_1(D_{\infty})$.  Hence $H_1(V)$ is finite, since both $H_1(D_{\infty})$ and
$H_1(K)$ are finite. We conclude from
assertion~\ref{lem:types_of_virtually_cyclic_groups:H_1_center_type_I} that $V$
is not of type~I. This finishes the proof of
assertions~\ref{lem:types_of_virtually_cyclic_groups:either_:type_I_or_II}
and~\ref{lem:types_of_virtually_cyclic_groups:exact_sequence}.
\\[1mm]~\ref{lem:types_of_virtually_cyclic_groups:epimorphisms} Since $V$ is
virtually cyclic, the kernel of $f$ is finite.  Since $Q$ does not contain a
non-trivial finite normal subgroup, every normal finite subgroup of $V$ is
contained in the kernel of $f$.  Hence $\ker(f)$ is the unique maximal finite
normal subgroup of $V$.
\\[1mm]~\ref{lem:types_of_virtually_cyclic_groups:characteristic_exact_sequence}
Since $K_W$ is finite and the image of $\phi$ is by assumption infinite, the
composite $p_W \circ \phi \colon V \to Q_W$ has infinite image.  Since $Q_W$ is
isomorphic to $\IZ$ or $D_{\infty}$, the same is true for the image of $p_W
\circ \phi \colon V \to Q_W$. By
assertion~\ref{lem:types_of_virtually_cyclic_groups:epimorphisms} the kernel of
$p_W \circ \phi \colon V \to Q_W$ is $K_V$. Hence $\phi(K_V) \subseteq K_W$ and
$\phi$ induces maps $\phi_K$ and $\phi_Q$ making the diagram appearing in
assertion~\ref{lem:types_of_virtually_cyclic_groups:characteristic_exact_sequence}
commutative.  Since the image of $p_W \circ \phi \colon V \to Q_W$ is infinite,
$\phi_Q(Q_V)$ is infinite. This implies that $\phi_Q$ is injective since both
$Q_V$ and $Q_W$ are isomorphic to $D_{\infty}$ or $\IZ$.  This finishes the
proof of Lemma~\ref{lem:types_of_virtually_cyclic_groups}.
\end{proof}


\section{Some categories attached to homogeneous spaces}
\label{sec:Some_categories_attached_to_homogeneous_spaces}

Let $G$ be a group and let  $S$ be a $G$-set, for instance a homogeneous space $G/H$.
Let $\calg^G(S)$ be the associated \emph{transport groupoid}.  Objects are the
elements in $S$.  The set of morphisms from $s_1$ to $s_2$ consists of those
elements $g \in G$ for which $gs_1 = s_2$.  Composition is given by the group
multiplication in $G$.  Obviously $\calg^G(S)$ is a connected groupoid
if $G$ acts transitively on $S$. A $G$-map $f \colon S \to T$ induces a functor
$\calg^G(f) \colon\calg^G(S) \to \calg^G(T)$ by sending an object $s \in S$ to
$f(s)$ and a morphism $g \colon s_1 \to s_2$ to the morphism $g \colon f(s_1)
\to f(s_2)$. We mention that for two objects $s_1$ and $s_2$ in $\calg^G(S)$ the
induced map $\mor_{\calg^G(S)}(s_1,s_2) \to \mor_{\calg^G(T)}(f(s_1),f(s_2))$ is
injective.

A functor $F \colon \calc_0 \to \calc_1$ of categories is called an
\emph{equivalence} if there exists a functor $F' \colon \calc_1 \to
\calc_0$ with the property that $F' \circ F$ is naturally equivalent
to the identity functor $\id_{\calc_0}$ and $F \circ F'$ is naturally
equivalent to the identity functor $\id_{\calc_1}$.  A functor $F$ is
a natural equivalence if and only if it is \emph{essentially
  surjective} (i.e., it induces a bijection on the isomorphism classes
of objects) and it is \emph{full} and \emph{faithful}, (i.e., for any
two objects $c,d$ in $\calc_0$ the induced map $\mor_{\calc_0}(c,d)
\to \mor_{\calc_1}(F(c),F(d))$ is bijective). 

Given a monoid $M$, let $\widehat{M}$ be the category with precisely one object
and $M$ as the monoid of endomorphisms of this object. For any subgroup $H$ of $G$, the inclusion
\[e(G/H)\colon \widehat{H}\to \calg^G(G/H), \quad g\mapsto (eH\xrightarrow{g} eH)\]
(where $e\in G$ is the unit element) is an equivalence of categories, 
whose inverse sends $g\colon g_1H \to g_2H$ to $g_2\inv g g_1\in G$.

Now fix an infinite virtually cyclic subgroup $V \subseteq G$ of type~I.  Then
$Q_V$ is an infinite cyclic group. Let $\gen(Q_V)$ be the set of generators.
Given a generator $\sigma \in \gen(Q_V)$, define $Q_V[\sigma]$ to be the
submonoid of $Q_V$ consisting of elements of the form $\sigma^n$ for $n
\in \IZ, n \ge 0$.  Let $V[\sigma] \subseteq V$ be the submonoid given by
$p_V^{-1}(Q_V[\sigma])$.  Let $\calg^G(G/V)[\sigma]$ be the subcategory of
$\calg^G(G/V)$ whose objects are the objects in $\calg^G(G/V)$
and whose morphisms $g \colon g_1V \to g_2V$ satisfy 
$g_2^{-1}gg_1 \in V[\sigma]$.  Notice that 
$\calg^G(G/V)[\sigma]$ is not a groupoid anymore,
but any two objects are isomorphic. Let $\calg^G(G/V)_K$ be the 
subcategory of $\calg^G(G/V)$ whose objects are the objects 
in $\calg^G(G/V)$ and whose morphisms
$g \colon g_1V \to g_2V$ satisfy $g_2^{-1}gg_1 \in K_V$.  Obviously
$\calg^G(G/V)_K$ is a connected groupoid and a  subcategory of $\calg^G(G/V)[\sigma]$.

We obtain the following
commutative diagram of categories
\begin{eqnarray}
\label{transport_evaluated_at_e}
&
\xymatrix@!C=8em{
\calg^G(G/V)[\sigma] \ar[r]_{j(G/V)[\sigma]}& 
\calg^G(G/V) 
\\
\widehat{V[\sigma]} \ar[r]_{\widehat{j_V[\sigma]}} \ar[u]^{e(G/V)[\sigma]}_{\simeq}
&
\widehat{V} \ar[u]_{e(G/V)}^{\simeq}
}
&
\end{eqnarray}
whose horizontal arrows are induced by the the obvious
inclusions and whose left vertical arrow is the restriction of $e(G/V)$ (and is also an equivalence of categories). 
The functor $e(G/V)$ also restricts to an equivalence of categories
\begin{eqnarray}
e(G/V)_K \colon \widehat{K_V} & \xrightarrow{\simeq} & \calg^G(G/V)_K.
\label{transport_K_evaluated_a_e}
\end{eqnarray}

\begin{remark*}
The relation of the categories $\widehat{K_V}$, $\widehat{V[\sigma]}$ and
$\widehat{V}$ to $\calg^G(G/V)_K$, $\calg^G(G/V)[\sigma]$ and $\calg^G(G/V)$ is
analogous to the relation of the fundamental group of a path connected space to
its fundamental groupoid.
\end{remark*}

Let $\overline{\sigma} \in V$ be any element which is mapped under the
projection $p_V \colon V \to Q_V$ to the fixed generator $\sigma$.  Right
multiplication with $\overline{\sigma}$ induces a $G$-map $R_{\sigma} \colon G/K_V
\to G/K_V, \; gK_V \mapsto g\overline{\sigma}K_V$.  One easily checks that
$R_{\sigma}$ is depends only on $\sigma$ and is independent of the choice of
$\overline{\sigma}$.  Let $\pr_V \colon G/K_V \to G/V$ be the projection. We obtain
the following commutative diagram
\begin{eqnarray}
\label{Z-action_on_calg(G/K)}
\xymatrix{
\calg^G(G/K_V) \ar[rr]^{R_{\sigma}} \ar[rd]_{\calg^G(\pr_V)} 
& &
\calg^G(G/K_V)\ar[ld]^{\calg^G(\pr_V)} 
\\
& 
\calg^G(G/V)
&
}
\end{eqnarray}


\section{Homotopy colimits of $\IZ$-linear and additive categories}
\label{sec:Homotopy_colimits_of_additive_categories}

Homotopy colimits of additive categories have been defined for instance
in~\cite[Section~5]{Bartels-Lueck(2009coeff)}. Here we review its definition and describe
some properties, first in the setting of $\IZ$-linear categories.

Recall that a $\IZ$-linear category is a category where all Hom-sets are provided with the
structure of abelian groups, such that composition is bilinear. Denote by $\Zcat$ the
category whose objects are $\IZ$-linear categories and whose morphisms are additive
functors between them. Given a collection of $\IZ$-linear categories $(\cala_i)_{i\in I}$,
their coproduct $\coprod_{i\in I} \calc_i$ in $\Zcat$ exists and has the following explicit description: Objects are
pairs $(i, X)$ where $i\in I$ and $X\in \cala_i$. The abelian group of morphisms $(i,
X)\to (j, Y)$ is non-zero only if $i=j$ in which case it is $\mor_{\cala_i}(X,Y)$.

Let $\calc$ be a small category.  Given a contravariant functor $F \colon \calc \to
\Zcat$, its \emph{homotopy colimit} (see for instance~\cite{Thomason(1979)}).
\begin{eqnarray}
  & \intgf{\calc}{F} &
  \label{int_calg_F}
\end{eqnarray}
is the $\IZ$-linear category obtained from the coproduct $\coprod_{c\in\calc} F(c)$ by adjoining morphisms
\[T_f\colon (d, f^*X) \to (c, X)\]
for each $(c, X)\in \coprod_{c\in \calc} F(c)$ and each morphism $f\colon d\to c$ in $\calc$. 
(Here we write $f^*X$ for $F(f)(X)$.) They are subject to the relations that $T_{\id}=\id$ and that all possible diagrams
\[\xymatrix{(e, g^*f^*X) \ar[r]^{T_g} \ar[rd]_{T_{f\circ g}} & (d,f^*X) \ar[d]^{T_f} && (d,f^*X) \ar[r]^{T_f}
  \ar[d]^{f^* u} & (c,X) \ar[d]^{u}
  \\
  & (c,X) && (d,f^*Y) \ar[r]^{T_f} & (c,Y) }\]
are to be commutative. 

Hence, a morphism in $\intgf{\calc}F$ from $(x,A)$ to $(y,B)$ can be uniquely written as a sum
\begin{equation}\label{eq:typical_morphisms_in_hocolim}
  \sum_{f \in \mor_{\calc}(x,y)} T_f \circ  \phi_f
\end{equation}
where $\phi_f \colon A \to f^*B$ is a morphism in $F(x)$ and all but finitely 
many of the morphisms $\phi_f$ are zero. The composition of two such
morphisms can be determined by the distributivity law and the rule
\[(T_f \circ \phi) \circ (T_g\circ \psi) = T_{f\circ g} \circ (g^*\phi\circ \psi)\] which just
follows the fact that both upper squares are commutative.

Using this description, it follows that the homotopy colimit has the following universal
property for additive functors $\intgf{\calc}{F}\to \cala$ into some other $\IZ$-linear
category $\cala$:

Suppose that we are given additive functors $j_c\colon F(c)\to \cala$, for each $c\in \calc$, 
and morphisms $S_f\colon j_d(f^*X) \to j_c(X)$ for each $X\in F(c)$ and each
$f\colon d\to c$ in $\calc$. If $S_{\id}=\id$ and all possible diagrams
\[\xymatrix{ j_c(g^*f^*X) \ar[r]^{S_g} \ar[rd]_{S_{f\circ g}} & j_d(f^*X) \ar[d]^{S_f} && j_d(f^*X) \ar[r]^{S_f}
  \ar[d]^{j_d(f^* u)} & j_c(X) \ar[d]^{j_c(u)}
  \\
  & j_c(X) && j_d(f^*Y) \ar[r]^{S_f} & j_c(Y) }\]
are commutative, then this data specifies an additive  functor $\intgf{\calc}{F}\to \cala$ by sending $T_f$ to $S_f$.

The homotopy colimit is functorial in $F$. Namely, if $S \colon F_0 \to F_1$ is a natural
transformation of contravariant functors $\calc \to \Zcat$, then it induces an additive functor
\begin{eqnarray}
  & \intgf{\calc}{S}  \colon \intgf{\calc}{F_0} \to \intgf{\calc}{F_1} &
  \label{intgf(calg)(S)}
\end{eqnarray}
of $\IZ$-linear categories. It is defined using the universal property by sending 
$F_0(c)$ to $F_1(c)\subset \intgf{\calc}{F_1}$ via $S$ and ``sending $T_f$ to $T_f$''. 
In more detail, the image of $T_f\colon (c, f^*(X))\to (d, X)$ (in $\intgf{\calc}{F_0}$) is given by
$T_f\colon (c, f^*(S(X)))\to (d, S(X))$ (in $\intgf{\calc}{F_1}$). 
Obviously we have for $S_1 \colon F_0 \to F_1$ and $S_2 \colon
F_1 \to F_2$
\begin{eqnarray}
  \left(\intgf{\calc}{S_2}\right) \circ \left(\intgf{\calc}{S_1}\right)
  & = &
  \intgf{\calc}{(S_2 \circ S_1)};
  \label{int_S_2_circint_S_1_is_int_S_2_circS_1}
  \\
  \intgf{\calc}{\id_F} & = & \id_{\intgf{\calc}{F}}.
  \label{int_id_is_id}
\end{eqnarray}

The construction is also functorial in $\calc$. Namely, let $W \colon \calc_1 \to \calc_2$
be a covariant functor.  Then we obtain a covariant functor
\begin{eqnarray}
  & W_* \colon \intgf{\calc_1}{F} \circ W  \to  \intgf{\calc_2}{F} &
  \label{W_ast}
\end{eqnarray}
of additive categories which is the identity on each $F(W(c))$ and sends ``$T_f$ to $T_{W(f)}$'', 
again interpreted appropriately. For covariant functors
$W_1 \colon \calc_1 \to \calc_2$, $W_2 \colon \calc_2 \to \calc_3$ and a contravariant
functor $F \colon \calc_3 \to \addcat$ we have
\begin{eqnarray}
(W_2)_* \circ (W_1)_* & = & (W_2 \circ W_1)_*;
\label{(W_2)_ast_circ_(W_1)_ast_is_(W_2_circ_W_1)_ast}
\\
(\id_{\calc})_* & = & \id_{\intgf{\calc}{F}}.
\label{(id_calg)_ast_is_id_int_calg_F}
\end{eqnarray}
These two constructions are compatible. Namely, given a natural transformation
$S \colon F_1 \to F_2$ of contravariant functors $\calc_2 \to \Zcat$ and a covariant
functor $W \colon \calc_1 \to \calc_2$, we get
\begin{eqnarray}
\left(\intgf{\calc_2}{S} \right) \circ W_*
& = &
W_* \circ \left(\intgf{\calc_1}{(S \circ W)} \right).
\label{compatibility_of_W_ast_and_int_S}
\end{eqnarray}

\begin{lemma}
\label{lem:(F_1)_ast_and_int_calg_S_and_equivalences}
\begin{enumerate}
\item \label{lem:(F_1)_ast_and_int_calg_S_and_equivalences:(F_1)_ast} Let $W
  \colon \cald \to \calc$ be an equivalence of categories.  Let $F \colon \calc \to \Zcat$ be a contravariant
  functor.  Then
\[
W_* \colon \intgf{\cald}{F \circ W} \to \intgf{\calc}{F}
\]
is an equivalence of  categories;

\item \label{lem:(F_1)_ast_and_int_calg_S_and_equivalences:int_calg_S}
Let $\calc$ be a  category and let $S \colon F_1 \to F_2$ be a transformation
of contravariant functors $\calc \to \Zcat$ such that for every
object $c$ in $\calc$ the functor $S(c) \colon F_0(c) \to F_1(c)$ is
an equivalence of categories. Then
\[
\intgf{\calc}{S} \colon \intgf{\calc}{F_1} \to \intgf{\calc}{F_2}
\]
is an equivalence of categories.
\end{enumerate}
\end{lemma}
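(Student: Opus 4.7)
The plan is to verify in each case that the functor in question is essentially surjective, full, and faithful, using the explicit description \eqref{eq:typical_morphisms_in_hocolim} of morphisms in the homotopy colimit. I treat \ref{lem:(F_1)_ast_and_int_calg_S_and_equivalences:int_calg_S} first, since there the source and target have the same indexing by $\calc$. An object $(c, Y)$ of $\intgf{\calc}{F_2}$ is, by essential surjectivity of $S(c)$, isomorphic in $F_2(c)$ to $S(c)(X)$ for some $X \in F_1(c)$; any such isomorphism $\phi$ may be viewed as the morphism $T_{\id_c}\circ \phi$ in $\intgf{\calc}{F_2}$, where it remains invertible because $T_{\id_c}=\id$. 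As $(c, S(c)X)$ is the image of $(c,X)$ under $\intgf{\calc}{S}$, this yields essential surjectivity. For fully-faithfulness, naturality of $S$ implies that $\intgf{\calc}{S}$ sends a typical morphism $T_f\circ \phi_f$ to $T_f \circ S(c_1)(\phi_f)$. The induced map on morphism groups is therefore the direct sum, indexed over $f\in \mor_\calc(c_1, c_2)$, of the bijections $S(c_1)\colon \mor_{F_1(c_1)}(X_1, f^*X_2) \to \mor_{F_2(c_1)}(S(c_1)X_1, f^*S(c_2)X_2)$, and is itself a bijection.

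For \ref{lem:(F_1)_ast_and_int_calg_S_and_equivalences:(F_1)_ast}, choose a quasi-inverse $W'$ to $W$ together with a natural isomorphism $\alpha\colon W\circ W' \xrightarrow{\cong} \id_\calc$. An arbitrary object $(c, X)$ of $\intgf{\calc}{F}$ then fits into a morphism
\[T_{\alpha_c}\colon \bigl(W(W'c),\, \alpha_c^*X\bigr) \to (c, X)\]
in $\intgf{\calc}{F}$. The coherence relations $T_{fg} = T_f \circ T_g$ and $T_{\id}=\id$ built into the definition of the homotopy colimit force $T_{\alpha_c}$ to be invertible, with inverse $T_{\alpha_c^{-1}}$. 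Since $(W(W'c), \alpha_c^*X)$ lies in the image of $W_*$, this establishes essential surjectivity. For fully-faithfulness, $W_*$ induces on morphism groups the map
\[\bigoplus_{g\in \mor_\cald(d_1, d_2)} \mor_{F(Wd_1)}\bigl(Y_1,\, W(g)^*Y_2\bigr) \;\longrightarrow\; \bigoplus_{f\in \mor_\calc(Wd_1, Wd_2)} \mor_{F(Wd_1)}\bigl(Y_1,\, f^*Y_2\bigr),\]
which is reindexing along the bijection $W\colon \mor_\cald(d_1, d_2) \to \mor_\calc(Wd_1, Wd_2)$ (here fullness and faithfulness of $W$ enter) combined with the identity on each summand, and hence bijective.

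No serious obstacle is expected: once the explicit form \eqref{eq:typical_morphisms_in_hocolim} is in hand, both statements reduce to routine bookkeeping. The only point requiring care is the observation that an isomorphism $\alpha$ in $\calc$ automatically produces an invertible morphism $T_\alpha$ in $\intgf{\calc}{F}$; this is what allows one to transport objects along the essential-surjectivity data of $W$ in part \ref{lem:(F_1)_ast_and_int_calg_S_and_equivalences:(F_1)_ast} and across the identity morphisms in part \ref{lem:(F_1)_ast_and_int_calg_S_and_equivalences:int_calg_S}.
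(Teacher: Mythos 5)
Your proof is correct, and it is the standard argument that the paper leaves implicit: the paper states only that ``the proof is an easy exercise,'' so there is no written proof to compare against. Your verification of essential surjectivity, fullness, and faithfulness via the explicit form \eqref{eq:typical_morphisms_in_hocolim} of morphisms in the homotopy colimit is exactly what is intended, and the key observation you single out---that an isomorphism $\alpha$ in the indexing category yields an invertible structural morphism $T_\alpha$ by the coherence relation $T_{f\circ g}=T_f\circ T_g$ together with $T_{\id}=\id$---is the crux in both parts. One small thing worth noting explicitly in part \ref{lem:(F_1)_ast_and_int_calg_S_and_equivalences:int_calg_S}: the identification $S(c_1)(f^*X_2)=f^*\bigl(S(c_2)X_2\bigr)$ that you use to describe the map on morphism groups is precisely the naturality square of $S$ applied to $f$; you invoke naturality but it would not hurt to name this identity. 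The paper also appends a remark that an additive functor which is an equivalence of underlying categories automatically admits an additive quasi-inverse and additive natural isomorphisms; your argument produces an equivalence of underlying categories, and that remark is what upgrades it to an equivalence in $\Zcat$, which is the sense in which the lemma is used later.
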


The proof is an easy exercise. Note the general fact that if $F\colon \calc\to \cald$ is an additive functor between $\IZ$-linear categories such that $F$ is an equivalence between the underlying categories, then it follows
automatically that there exists an additive inverse equivalence $F'$ and two additive natural equivalences 
$F' \circ F \simeq \id_{\calc}$ and $F \circ F' \simeq \id_{\cald}$.

\begin{notation}
  If $W\colon \calc_1\to \calc$ is the inclusion of a subcategory, then the same is true
  for $W_*$. If no confusion is possible, we just write
  \[\intgf{\calc_1}F:=\intgf{\calc_1}{F\circ W}\subset \intgf{\calc}F.\]
\end{notation}

Denote by $\addcat$ the category whose objects are additive categories and whose morphisms
are given by additive functors between them. Notice that $\int_\calc F$ is not necessarily
an additive category even if all the $F(c)$ are -- the direct sum $(c, X) \oplus (d, Y)$
need not exist.  However any isomorphism $f\colon c\to d$ in $\calc$ induces an
isomorphism $T_f\colon (c, f^*Y) \to (d, Y)$ so that
\[(c, X) \oplus (d, Y) \cong (c, X) \oplus (c, f^*Y) \cong (c, X\oplus f^*Y).\] 
Hence, if
in the index category all objects are isomorphic and all the $F(c)$ are additive, then
$\int_\calc F$ is an additive category. As for additive categories $\cala, \calb$ we have
\[\mor_{\Zcat}(A,B) = \mor_{\addcat}(A,B)\]
(in both cases the morphisms are just additive functors), the universal property for
additive functors $\intgf{\calc}{F}\to \cala$ into $\IZ$-linear categories extends to a
universal property for additive functors into additive categories.

In the general case of an arbitrary indexing category, the homotopy colimit in the setting
of additive categories still exists. It is obtained by freely adjoining direct sums to the
homotopy colimit for $\IZ$-linear categories; the universal properties then holds in the
setting of ``additive categories with choice of direct sum''. We will not discuss this in
detail here since in all the cases we will consider, the indexing category has the
property that any two objects are isomorphic.
                                                                                                                                                                                                                                                                                                                                         
\begin{notation}
  If the indexing category $\calc$ has a single object and $F\colon \calc\to\Zcat$ is a
  contravariant functor, then we will write $X$ instead of $(*,X)$ for a typical element
  of the homotopy colimit. The structural morphisms in $\intgf{\calc}{F}$ thus take the
  simple form
  \[T_f\colon f^*X \to X\] for $f$ a morphism (from the single object to itself) in
  $\calc$.
\end{notation}


\section{The twisted Bass-Heller-Swan Theorem for additive categories}
\label{sec:The_twisted_Bass-Heller-Swan_Theorem_for_additive_categories}

Given an additive category $\cala$, we denote by $\bfK(\cala)$ the
non-connective $K$-theory spectrum associated to it (after idempotent completion),
see~\cite{Lueck-Steimle(2014delooping)},~\cite{Pedersen-Weibel(1989)}.
Thus we obtain a covariant functor
\begin{eqnarray}
\bfK \colon \addcat \to \Spectra.
\label{K-functor_addcat_to_spectra}
\end{eqnarray}

Let $\calb$ be an additive $\IZ$-category, i.e.~an additive category with a right
action of the infinite cyclic group. Fix a generator $\sigma$ of the
infinite cyclic group $\IZ$.  Let $\Phi \colon \calb \to \calb$ be the
automorphism of additive categories given by multiplication with $\sigma$.  Of
course one can recover the $\IZ$-action from $\Phi$.  Since $\widehat{\IZ}$ has
precisely one object, we can and will identify the set of objects of
$\int_{\widehat{\IZ}} \calb$ and $\calb$ in the sequel.  Let $i_{\calb} \colon
\calb \to \intgf{\widehat{\IZ}}{\calb}$ be the inclusion into the homotopy colimit.

The structural morphisms $T_\sigma\colon \Phi(B) \to B$ of $\intgf{\widehat\IZ}{\calb}$
assemble to a natural isomorphism $i_\calb\circ\Phi\to i_\calb$ in the following diagram:
\[
\xymatrix{\calb \ar[rr]^{\Phi} \ar[rd]_(.4){i_{\calb}}  
& &
\calb \ar[ld]^(.4){i_{\calb}} 
\\
& 
\int_{\widehat{\IZ}} \calb
&
}
\]
If we apply the non-connective $K$-theory spectrum to it, we obtain
a diagram of spectra which commutes up to preferred homotopy.
\[
\xymatrix{\bfK(\calb) \ar[rr]^{\bfK(\Phi)} \ar[rd]_(.4){\bfK(i_{\calb})}  
& &
\bfK(\calb) \ar[ld]^(.4){\bfK(i_{\calb})} 
\\
& 
\bfK\left(\int_{\widehat{\IZ}} \calb\right)
&
}
\]
It induces a map of spectra
\[
\bfa_{\calb} \colon T_{\bfK(\Phi)} \to \bfK\left(\int_{\widehat{\IZ}} \calb\right)
\]
where $T_{\bfK(\Phi)}$ is the mapping torus of the map of spectra 
$\bfK(\Phi) \colon \bfK(\calb) \to \bfK(\calb)$ which is defined as the pushout
\[
\xymatrix@!C=12em{
\bfK(\calb) \wedge \{0,1\}_+ = \bfK(\calb) \vee \bfK(\calb) \ar[r]^-{\bfK(\Phi) \vee  \id_{\bfK(\calb)}} \ar[d]
&
\bfK(\calb) \ar[d] 
\\
\bfK(\calb) \wedge [0,1]_+ \ar[r]
&
T_{\bfK(\Phi)}
}
\]
Denote by $\IZ[\sigma]$ the submonoid $\{\sigma^n \mid n \in \IZ, n \ge 0\}$ generated by $\sigma$. 
Let $j[\sigma] \colon \IZ[\sigma] \to \IZ$ be the inclusion.
Let $i_{\calb}[\sigma] \colon \calb \to \intgf{\widehat{\IZ]\sigma]}}{\calb}$ be the inclusion induced by $i_{\calb}$.
Define a functor of additive categories
\[
\ev_{\calb}[\sigma] \colon \int_{\widehat{\IZ[\sigma]}}{\calb} \to \calb
\]
extending the identity on $\calb$ by sending a morphism $T_{\sigma^n}$ to $0$ for $n>0$. 
(Of course $\sigma^0=\id$ must go to the identity.) We obtain the following diagram of spectra
\[
\xymatrix@!C= 10em{\bfK(\calb) \ar@/_{10mm}/[rr]_{\id} \ar[r]^{\bfK(i_{\calb}[\sigma])} 
&
\bfK\left(\intgf{\widehat{\IZ[\sigma]}}{\calb}\right)
\ar[r]^{\bfK(\ev_{\calb}[\sigma])} 
&
\bfK(\calb)
}\]
Define 
$\bfNK(\calb,\sigma)$ 
as the homotopy fiber of the  map 
$\bfK(\ev_{\calb}[\sigma]) \colon \bfK\left(\intgf{\widehat{\IZ[\sigma]}}{\calb }\right) \to \bfK(\calb) $.
Let  $\bfb_{\calb}[\sigma]$ denote the composite
\[
\bfb_{\calb}[\sigma] \colon \bfNK(\calb,\sigma) 
\to \bfK\left(\intgf{\widehat{\IZ[\sigma]}}{\calb}\right) \to \bfK\left(\int_{\widehat{\IZ}} \calb\right)
\]
of the canonical map with the inclusion. Let $\gen(\IZ)$ be the set of generators of the infinite cyclic group $\IZ$.
Put
\[
\bfNK(\calb) := \bigvee_{\sigma \in \gen(\IZ)} 
\bfNK(\calb,\sigma)
\]
and define
\[
\bfb_{\calb} := \bigvee_{\sigma \in \gen(\IZ)} \bfb_\calb[\sigma] 
\colon \bigvee_{\sigma \in \gen(\IZ)} 
\bfNK(\calb,\sigma)
\to 
\bfK\left(\intgf{\widehat{\IZ}}{\calb}\right).
\]

The proof of the following result can be found
in~\cite{Lueck-Steimle(2013twisted_BHS)}.  The case where the $\IZ$-action on
$\calb$ is trivial and one considers only $K$-groups in dimensions $n \le 1$ has
already been treated by Ranicki~\cite[Chapter~10 and~11]{Ranicki(1992a)}.  If
$R$ is a ring with an automorphism and one takes $\calb$ to be the category
$\FGF{R}$ of finitely generated free $R$-modules with the induced $\IZ$-action,
Theorem~\ref{the:Twisted_Bass-Heller-Swan_decomposition_for_additive_categories}
boils down for higher algebraic $K$-theory to the twisted
Bass-Heller-Swan-decomposition of 
Grayson~\cite[Theorem~2.1 and Theorem~2.3]{Grayson(1988)}).

\begin{theorem}[Twisted Bass-Heller-Swan decomposition for additive categories]
\label{the:Twisted_Bass-Heller-Swan_decomposition_for_additive_categories}
The map of spectra
\[
\bfa_{\calb} \vee \bfb_{\calb} \colon T_{\bfK(\Phi)}  \vee \bfNK(\calb)  \xrightarrow{\simeq} 
\bfK\left(\intgf{\widehat{\IZ}}{\calb}\right)
\]
is a weak equivalence of spectra.
\end{theorem}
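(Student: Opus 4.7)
My plan is to follow the strategy of Grayson's original proof of the twisted Bass--Heller--Swan theorem~\cite[Theorems~2.1 and~2.3]{Grayson(1988)} for rings, adapted to additive categories with $\IZ$-action using the non-connective $K$-theory machinery of~\cite{Lueck-Steimle(2014delooping)}. The argument splits into two essentially independent inputs: identifying the two Nil summands as split wedge summands, and establishing a Bass-type cofiber sequence of spectra that incorporates the twist.

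The first input is immediate from the definitions. Both evaluation functors $\ev_{\calb}[\sigma^{\pm 1}]$ admit the inclusions $i_{\calb}[\sigma^{\pm 1}]$ as strict sections of additive categories. After applying $\bfK$, the defining homotopy fiber sequence
\[
\bfNK(\calb,\sigma) \to \bfK\bigl(\intgf{\widehat{\IZ[\sigma]}}{\calb}\bigr) \xrightarrow{\bfK(\ev_{\calb}[\sigma])} \bfK(\calb)
\]
is split at the level of spectra and yields a canonical wedge decomposition $\bfK(\intgf{\widehat{\IZ[\sigma]}}{\calb}) \simeq \bfK(\calb) \vee \bfNK(\calb,\sigma)$, and similarly for $\sigma^{-1}$.

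The main technical step is to establish a Bass-type cofiber sequence of spectra relating $\bfK$ of the Laurent category to $\bfK$ of its two polynomial subcategories, with the twist $\Phi$ correctly built in. To prove its existence I would model each additive category by its Waldhausen category of bounded chain complexes, where the approximation and fibration theorems apply. The guiding geometric picture is a ``scissor'' decomposition: every morphism $\sum_n T_{\sigma^n}\circ \phi_n$ in $\intgf{\widehat{\IZ}}{\calb}$ splits uniquely into a non-negative-degree part lying in $\intgf{\widehat{\IZ[\sigma]}}{\calb}$ and a strictly negative-degree part lying in $\intgf{\widehat{\IZ[\sigma^{-1}]}}{\calb}$. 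The twist $\Phi$ is forced in through the structural natural isomorphism $T_\sigma \colon \Phi \circ i_\calb \cong i_\calb$, which witnesses that the two embeddings $\calb \to \intgf{\widehat{\IZ}}{\calb}$ through the positive and negative polynomial categories coincide only up to a canonical but nontrivial isomorphism.

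I expect this Bass-type cofiber sequence to be the main obstacle; in the ring case Grayson invokes his resolution theorem combined with Quillen's localization, but for additive categories one typically needs a Karoubi filtration or Eilenberg-swindle argument to annihilate a residual ``tame morphisms'' category and then to apply Waldhausen's fibration theorem, while carefully tracking the natural isomorphism $T_\sigma$. The detailed implementation is the subject of the companion paper~\cite{Lueck-Steimle(2013twisted_BHS)}. Once the cofiber sequence is in hand, the conclusion is a routine long-exact-sequence comparison: substituting the Nil decompositions from the first step peels off $\bfNK(\calb,\sigma) \vee \bfNK(\calb,\sigma^{-1})$ as a wedge summand untouched, while the residual cofiber involves only $\bfK(\calb)$ and the twist $\bfK(\Phi)$ and is identified, via the universal property of the mapping torus as a homotopy pushout of $\bfK(\calb) \xleftarrow{\bfK(\Phi)\vee\id}\bfK(\calb)\vee\bfK(\calb)\xrightarrow{\nabla}\bfK(\calb)$, with $T_{\bfK(\Phi)}$. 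Unravelling the construction shows that the resulting wedge equivalence coincides with the map $\bfa_{\calb} \vee \bfb_{\calb}$ of the statement.
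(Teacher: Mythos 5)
The paper does not give its own proof of this theorem: it refers the reader to the companion paper~\cite{Lueck-Steimle(2013twisted_BHS)} and only remarks that for $\calb = \FGF{R}$ with a twisted $\IZ$-action the statement specializes to Grayson's twisted Bass--Heller--Swan theorem. Your proposal sketches a plausible outline of how that proof should go --- splitting off the two Nil summands via the strict sections $i_{\calb}[\sigma^{\pm 1}]$ of the evaluations (correct: the fiber sequences split since $\bfK(\ev_{\calb}[\sigma^{\pm 1}])$ admits $\bfK(i_{\calb}[\sigma^{\pm 1}])$ as a section), establishing a Bass-type localization cofiber sequence relating the polynomial and Laurent homotopy colimits, and identifying the residual cofiber with $T_{\bfK(\Phi)}$ via its homotopy pushout presentation (which agrees with the paper's definition of the mapping torus after replacing $\bfK(\calb)\wedge[0,1]_+$ by $\bfK(\calb)$ along the fold). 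You also, correctly, defer the genuinely hard step --- the localization cofiber sequence for the non-connective $K$-theory of homotopy colimits of additive categories --- to the companion paper. Since the paper itself defers the entire proof to that reference, your treatment is consistent with the paper's handling of this statement; it is an accurate strategic summary rather than an independent proof, which is the honest state of affairs given the paper's own reference structure.
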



\section{Some additive categories associated to an additive $G$-category}
\label{sec:Some_additive_categories_associated_to_an_additive_G-category}

Let $G$ be a group. Let $\cala$ be an additive $G$-category, i.e., an additive
category with a right $G$-operation by isomorphisms of additive categories. We
can consider $\cala$ as a contravariant functor $\widehat{G} \to \addcat$. Fix a
homogeneous $G$-space $G/H$. Let 
$\pr_{G/H} \colon \calg^G(G/H) \to \calg^G(G/G) = \widehat{G}$ 
be the projection induced by the canonical $G$-map $G/H \to
G/G$. Then we obtain a covariant functor $\calg^G(G/H) \to \addcat$ by sending
$G/H$ to $\cala \circ \pr_G$.  Let $\intgf{\calg^G(G/H)}{\cala \circ \pr_{G/H}}$
be the additive category given by the homotopy colimit (defined in~\eqref{int_calg_F}) of this functor. 
A $G$-map $f \colon G/H \to G/K$ induces a functor $\calg^G(f) \colon \calg^G(G/H)
\to \calg^G(G/K)$ which is compatible with the projections to
$\widehat{G}$. Hence it induces a functor of additive categories, see~\eqref{W_ast}
\[
\calg^G(f)_* \colon \intgf{\calg^G(G/H)}{\cala \circ \pr_{G/H}} 
\to \intgf{\calg^G(G/K)}{\cala \circ \pr_{G/K}}.
\]
Thus we obtain a covariant functor
\begin{align}
& \Or(G) \to \addcat, \quad G/H \mapsto  \intgf{\calg^G(G/H)}{\cala \circ \pr_{G/H}}. &
\label{functor_or(G)_to_addcat}
\end{align}

\begin{remark}
  Applying 
  Lemma~\ref{lem:(F_1)_ast_and_int_calg_S_and_equivalences}~\ref{lem:(F_1)_ast_and_int_calg_S_and_equivalences:(F_1)_ast}
  to the equivalence of categories $e(G/H)\colon \widehat H\to \calg^G(G/H)$, we see that
  the functor \eqref{functor_or(G)_to_addcat}, at $G/H$, takes the value
  $\intgf{\widehat H}{\cala}$ where $\cala$ carries the restricted $H$-action. The more complicated
  description is however needed for the functoriality.
\end{remark}

\begin{notation}\label{notation:simplifying}
For the sake of brevity, we will just write $\cala$ for any composite $\cala\circ \pr_{G/H}$ 
if no confusion is possible. In this notation, \eqref{functor_or(G)_to_addcat} takes the form
\[G/H\mapsto \intgf{\calg^G(G/H)} \cala.\]
\end{notation}

Let $V \subseteq G$ be an infinite virtually cyclic subgroup of type~I. 
In the sequel we abbreviate $K = K_V$ and $Q = Q_V$. Let $\pr_K \colon \calg^G(G/V)_K \to \widehat{K}$ 
the functor which sends a morphism 
$g \colon g_1V \to g_2V$ to $g_2^{-1}gg_1 \in K$.

Fixing a generator $\sigma$ of the infinite cyclic group $Q$, 
the inclusions  $\calg^G(G/V)_K\subset \calg^G(G/V)[\sigma] \subset \calg^G(G/V)$ induce inclusions
\begin{equation}\label{j(G/V)[sigma]_ast}
\intgf{\calg^G(G/V)_K}{\cala}\subset \intgf{\calg^G(G/V)[\sigma]}{\cala }  
\subset  \intgf{\calg^G(G/V)}{\cala}. 
\end{equation}

Actually the category into the middle retracts onto the the smaller one. To see this, define a retraction
\begin{equation}
\ev(G/V)[\sigma]_K \colon 
\intgf{\calg^G(G/V)[\sigma]}{\cala } \to \intgf{\calg^G(G/V)_K}{\cala}
\label{ev(G/V)[sigma]_K}
\end{equation}
as follows. It is the identity on every copy of the additive category $\cala$ inside the
homotopy colimit. Let $T_g\colon (g_1 V, g^*A)\to (g_2 V, A)$ be a structural morphism in
the homotopy colimit, where $g \colon g_1V \to g_2V$ in $\calg^G(G/V)[\sigma]$ is a
morphism in $\calg^G(G/V)[\sigma]$ (that is, $g$ is an element of $G$ satisfying 
$g_2\inv g g_1\in V[\sigma]$). If
\[g_2\inv g g_1\in K\subset V[\sigma],\]
then $g$ is by definition a morphism in $\calg^G(G/V)_K\subset \calg^G(G/V)[\sigma]$ and we may let 
\[\ev(G/V)[\sigma]_K(T_g)=T_g.\]
Otherwise we send the morphism $T_g$ to 0. This is well-defined, since for two elements $h_1,h_2 \in
V[\sigma]$ we have $h_1h_2 \in K$ if and only if both $h_1 \in K$ and 
$h_2 \in K$ hold.

Similarly the inclusion $\intgf{\widehat{K}}{\cala} 
\subset \intgf{\widehat{V[\sigma]}}{\cala}$ is split by a retraction
\[
\ev_V[\sigma] \colon 
\intgf{\widehat{V[\sigma]}}{\cala}  
\to 
 \intgf{\widehat{K}}{\cala} 
\]
defined as follows: On the copy of $\cala$ inside $\intgf{\widehat{ V[\sigma]}}\cala$, the
functor is defined to be the identity. A structural morphism $T_g\colon g^*A\to A$ is sent
to itself if $g\in K$, and to zero otherwise. One easily checks that the following diagram
commutes (where the unlabelled arrows are inclusions) and has equivalences of additive
categories as vertical maps.
\begin{eqnarray}
&
\xymatrix@!C=10em{
\intgf{\calg^G(G/V)_K}{\cala} \ar[r]  \ar@/^{10mm}/[rr]^{\id}
&
\intgf{\calg^G(G/V)[\sigma]}{\cala} \ar[r]^{\ev(G/V)[\sigma]} 
&
\intgf{\calg^G(G/V)_K}{\cala}
\\
\intgf{\widehat{K}}{\cala} \ar[r] \ar[u]_{\simeq}^{(e(G/V)_K)_*} \ar@/_{10mm}/[rr]_{\id}
&
\intgf{\widehat{V[\sigma]}}{\cala}  \ar[r]^{\ev_V[\sigma]}  \ar[u]_{\simeq}^{e(G/V)[\sigma]_*}
&
\intgf{\widehat{K}}{\cala} \ar[u]_{\simeq}^{(e(G/V)_K)_*}
}
\label{diagram_reducing_to_groups_ev}
\end{eqnarray}
We obtain from~\eqref{transport_evaluated_at_e} and
Lemma~\ref{lem:(F_1)_ast_and_int_calg_S_and_equivalences}~%
\ref{lem:(F_1)_ast_and_int_calg_S_and_equivalences:(F_1)_ast}
the following commutative diagram of additive categories with equivalences
of additive categories as vertical maps
\begin{eqnarray}
&
\xymatrix@!C=10em{
\intgf{\calg^G(G/V)[\sigma]}{\cala} \ar[r] & 
\intgf{\calg^G(G/V)}{\cala} 
\\
\intgf{\widehat{V[\sigma]}}{\cala} \ar[r] \ar[u]^{e(G/V)[\sigma]_*}_{\simeq}
&
\intgf{\widehat{V}}{\cala} \ar[u]_{e(G/V)_*}^{\simeq}
}
&
\label{diagram_of_cat_i_i[sigma]}
\end{eqnarray}
(where again the unlabelled arrows are the inclusions).

Now we abbreviate $\calb = \intgf{\widehat K}{ \cala}$. Next we define a right
$Q$-action on $\calb$ which will depend on a choice of an element $\overline{\sigma} \in
V$ such that $p_V \colon V \to Q$ sends $\overline{\sigma} $ to $\sigma$. Such an element
induces a section of the projection $G\to Q$ by which any action of $G$ induces an action
of $Q$. In short, the action of $Q$ on $\calb$ is given by the action of $G$ onto
$\cala\subset \calb$, and by the conjugation action of $Q$ on the indexing category $\widehat
K$. In more detail, the action of $\sigma\in Q$ is specified by the automorphism
\[\Phi\colon \intgf{\widehat K} \cala\to \intgf{\widehat K}\cala\]
defined as follows: A morphism $\varphi\colon A\to B$ in $\cala$ is sent to
$\overline{\sigma}^*\varphi\colon \overline{\sigma}^*A \to \overline{\sigma}^*B$, and a
structural morphism $T_g\colon g^*A\to A$ is sent to the morphism
\[T_{\overline{\sigma}\inv g \overline{\sigma}}\colon \overline\sigma ^* g^* A 
= (\overline\sigma \inv g \overline\sigma )^* \overline\sigma ^* A \to \overline\sigma^*A.\]

With this notation we obtain an additive functor
\[\Psi\colon \intgf{\widehat{Q}}\calb\to \intgf{\widehat{V}}\cala\]
defined to extend the inclusion $\calb=\intgf{\widehat{K}}\cala\to
\intgf{\widehat{V}}\cala$ and such that a structural morphism $T_\sigma\colon \Phi(A)\to A$
is sent to $T_{\overline{\sigma}}\colon \Phi(A)=\overline{\sigma}^*A\to A$. 

In more detail, a morphism in $\intgf{\widehat Q}{\calb}$ can be uniquely written as a finite sum
\[\sum_{n\in\IZ} T_{\overline{\sigma}^n} \circ \biggl(\sum_{k\in K}  T_k \circ\phi_{k,n}\biggr) 
= \sum_{n, k} T_{\overline{\sigma}^n\cdot k}\circ  \phi_{k, n}.\]
Since any element
in $V$ is uniquely a product $\overline{\sigma}^n\cdot k$ with $k\in K$, the functor $\Psi$ is
fully faithful. As it is the identity on objects, $\Psi$ is an isomorphism of
categories. It also restricts to an isomorphism of categories
\[\Psi[\sigma]\colon \intgf{\widehat{Q[\sigma]}} \calb \to \intgf{\widehat{V[\sigma]}} \cala.\]

Define a functor
\[
\ev_{\calb}[\sigma] \colon \intgf{\widehat{Q[\sigma]}}{\calb}  \to \calb
\]
as follows. It is the identity functor on $\calb$, and a non-identity structural 
morphism $T_q\colon q^*B\to B$ is sent to 0. One easily
checks using~\eqref{diagram_reducing_to_groups_ev}
and~\eqref{diagram_of_cat_i_i[sigma]} that the following diagram of additive
categories commutes (with unlabelled arrows given by inclusions) and that all vertical arrows are equivalences of additive
categories:
\begin{eqnarray}
\label{passage_to_calb}
&
\xymatrix@!C= 9em{
\intgf{\calg^G(G/V)_K}{\cala} 
&
\intgf{\calg^G(G/V)[\sigma]}{\cala} \ar[l]_{\ev(G/V)[\sigma]_K} \ar[r]
&
\intgf{\calg^G(G/V)}{\cala} 
\\
\intgf{\widehat{K}}{\cala} \ar[u]^{(e(G/V)_K)_*}_{\simeq}
& 
\intgf{\widehat{V[\sigma]}}{\cala} \ar[r] \ar[l]_{\ev_V[\sigma]} \ar[u]^{e(G/V)[\sigma]_*}_{\simeq}
&
\intgf{\widehat{V}}{\cala} \ar[u]^{e(G/V)_*}_{\simeq}
\\
\calb \ar[u]^{\id}_{\cong}
& \int_{\widehat{Q[\sigma]}} \calb \ar[r] \ar[l]_{\ev_{\calb}[\sigma]} \ar[u]^{\Psi[\sigma]}_{\cong}
& \int_{\widehat{Q}} \calb \ar[u]^{\Psi}_{\cong}
}
&
\end{eqnarray}

Recall from section~\ref{sec:Some_categories_attached_to_homogeneous_spaces} 
that $q_V \colon G/K \to G/V$ is the projection and that $R_\sigma$ is the 
automorphism of $\intgf{\calg^G(G/K)}{\cala}$ induced by right multiplication with $\overline \sigma$.

We have the following (not necessarily commutative) diagram of additive categories
all of whose   vertical arrows are equivalences of additive categories and the unlabelled arrows are the inclusions.
\begin{eqnarray}
\label{mapping_torus}
\xymatrix@!C=7em{
\intgf{\calg^G(G/K)}{\cala}  \ar[rd]_{\calg^G(\pr_V)_*}\ar[rr]^(.7){R_{\sigma}} 
& 
& 
\intgf{\calg^G(G/K)}{\cala}  \ar[ld]^{\calg^G(\pr_V)_*}
\\
& 
\intgf{\calg^G(G/V)}{\cala}  
&
\\
\intgf{\widehat{K}}{\cala} \ar[rd]  \ar[uu]^{(e(G/V)_K)_*}_{\simeq} \ar[rr]^(.7){\Phi}|{\quad } 
&
&
\intgf{\widehat{K}}{\cala} \ar[ld] \ar[uu]_{(e(G/V)_K)_*}^{\simeq}
\\
&
\intgf{\widehat{V}}{\cala} \ar[uu]^(.7){e(G/V)_*}_(.7){\simeq}
&
\\
\calb \ar[rd]_{i_{\calb}} \ar[rr]^(.7){\Phi}|{\quad }  \ar[uu]_{\cong}^{\id}
&
&
\calb \ar[ld]^{i_{\calb}} \ar[uu]^{\cong}_{\id}
\\
&
\intgf{\widehat{Q}}{\calb} \ar[uu]^(.7){\Psi}_(.7){\simeq}
&
}
\end{eqnarray}

The lowest triangle commutes up to a preferred natural isomorphism $T \colon i_{\calb}
\circ \Phi \xrightarrow{\cong} i_{\calb}$ which is part of the structural data of the
homotopy colimit. We equip the middle triangle with the natural isomorphism $\Psi \circ
T$. Explicitly it is just given by the structural morphisms $T_{\overline{\sigma}}\colon \overline{\sigma}^*A\to A$.

The three squares ranging from the middle to the
lower level commute and the two natural equivalences above are compatible with
these squares.  The top triangle commutes. The back upper square
commutes up to a preferred natural isomorphism $S \colon (e(G/V)_K)_* \circ \Phi
\xrightarrow{\cong} R_{\sigma} \circ (e(G/V)_K)_*$.  It assigns to an object $A
\in \cala$, which is the same as an object in $\intgf{\widehat{K}}{\cala}$, the
structural isomorphism 
\[S(A):=T_{\overline{\sigma}}\colon (eK, \overline\sigma^*A) \to (\overline\sigma K, A).\]

The other two squares joining the upper to the middle level commute.  From the explicit
description of the natural isomorphisms it becomes apparent that the preferred natural
isomorphism for the middle triangle defined above and the preferred natural
isomorphism for the upper back square are compatible in the sense that $e(G/V)[\sigma]_*
\circ \Psi \circ T = \calg^G(\pr_V)_* \circ S$.


\section{Some $K$-theory-spectra over the orbit category}
\label{sec:Some_K-theory-spectra_over_the_orbit_category}

In this section we introduce various $K$-theory spectra. For a detailed
introduction to spaces, spectra and modules over a category and some
constructions of K-theory spectra, we refer
to~\cite{Davis-Lueck(1998)}.

Given an additive $G$-category $\cala$, we obtain a covariant $\Or(G)$-spectrum
\begin{eqnarray}
\bfK^G_{\cala} \colon \Or(G) \to \Spectra, 
\quad G/H \mapsto \bfK\left(\intgf{\calg^G(G/H)}{\cala \circ \pr_{G/H}}\right).
\label{K-theoretic_Or(G)-spectrum}
\end{eqnarray}
by the composite of the two 
functors~\eqref{K-functor_addcat_to_spectra} and~\eqref{functor_or(G)_to_addcat}. It is naturally 
equivalent to the covariant $\Or(G)$-spectrum which is
denoted in the same way and constructed
in~\cite[Definition~3.1]{Bartels-Reich(2007coeff)}.

We again adopt notation~\ref{notation:simplifying}, abbreviating an expression such as $\cala \circ \pr_{G/H}$ just by $\cala$.
Given a virtually cyclic
subgroup $V \subseteq G$, we obtain the following map of spectra induced by the functors 
$j(G/V)[\sigma]_*$ of~\eqref{j(G/V)[sigma]_ast} and 
$\ev(G/V)[\sigma]$ of~\eqref{ev(G/V)[sigma]_K}
\[
\bfK\left(\intgf{\calg^G(G/V)_K}{\cala}\right)
\xleftarrow{\bfK(\ev(G/V)[\sigma])} 
\bfK\left(\intgf{\calg^G(G/V)[\sigma]}{\cala}\right)
\xrightarrow{\bfK(j(G/V)[\sigma]_*)} 
\bfK\left(\intgf{\calg^G(G/V)}{\cala}\right). 
\]

\begin{notation} \label{not:NK_spectra} Let $\bfNK(G/V;\cala,\sigma)$ be
  the spectrum given by the homotopy fiber of
  $\bfK(\ev(G/V)[\sigma]_*) \colon \bfK\left(\intgf{\calg^G(G/V)[\sigma]}{\cala}\right) 
\to \bfK\left(\intgf{\calg^G(G/V)_K}{\cala}\right)$.
  
  Let $\bfl \colon \bfNK(G/V;\cala,\sigma) \to \bfK\left(\intgf{\calg^G(G/V)[\sigma]}{\cala}\right)$
  be the canonical map of spectra.   Define the map of spectra
  \[
  \bfj(G/V;\cala,\sigma) \colon \bfNK(G/V;\cala,\sigma) 
\to \bfK\left(\intgf{\calg^G(G/V)}{\cala}\right)
  \]
  to be the composite $\bfK(j(G/V)[\sigma]_*) \circ \bfl$.
\end{notation}

Consider a $G$-map $f \colon G/V \to G/W$, where $V$ and $W$ are virtually cyclic groups
of type~I.  It induces a functor $\calg^G(f) \colon \calg^G(G/V) \to \calg^G(G/W)$.

It induces also a bijection 
\begin{equation}\label{eq:definition_gen(f)}
\gen(f) \colon \gen(Q_V) \to \gen(Q_W)
\end{equation}
as
follows. Fix an element $g\in G$ such that $f(eV) = gW$. Then $g^{-1}Vg
\subseteq W$. The injective group homomorphism 
$c(g) \colon V \to W, \; v \mapsto g^{-1}vg$ induces an injective group 
homomorphism $Q_{c(g)} \colon Q_V \to Q_W$ by 
Lemma~\ref{lem:types_of_virtually_cyclic_groups}~\ref{lem:types_of_virtually_cyclic_groups:characteristic_exact_sequence}.  
For $\sigma \in \gen(Q_V)$ let $\gen(f)(\sigma) \in \gen(Q_W)$ be uniquely determined by the
property that $Q_{c(g)}(\sigma) = \gen(f)(\sigma)^n$ for some $n \ge1$. 
One easily checks  that this is independent of the choice of $g \in G$
with $f(eV) = gW$ since for $w \in W$ the conjugation homomorphism 
$c(w) \colon W \to W$ induces the identity on $Q_W$. Using
 Lemma~\ref{lem:types_of_virtually_cyclic_groups}~\ref{lem:types_of_virtually_cyclic_groups:characteristic_exact_sequence} it follows
that  $\calg^G(f) \colon \calg^G(G/V) \to \calg^G(G/W)$ induces functors
\begin{eqnarray*} 
\calg^G(f)[\sigma] \colon \calg^G(G/V)[\sigma] & \to & \calg^G(G/W)[\gen(f)(\sigma)];
\\
\calg^G(f)_K\colon \calg^G(G/V)_K & \to & \calg^G(G/W)_K.
\end{eqnarray*}
Hence we obtain a commutative diagram of maps of spectra
\[
\xymatrix@!C=14em{\bfK\left(\intgf{\calg^G(G/V)_K}{\cala}\right) 
\ar[r]^{\bfK((\calg^G(f)_K)_*)}
&
\bfK\left(\intgf{\calg^G(G/W)_K}{\cala}\right)
\\
\bfK\left(\intgf{\calg^G(G/V)[\sigma]}{\cala}\right) 
\ar[u]^{\bfK(\ev(G/V)[\sigma])} \ar[r]^{\bfK(\calg^G(f)[\sigma]_*)} \ar[d]_{\bfK(j(G/V)[\sigma]_*)}
&
\bfK\left(\intgf{\calg^G(G/W)[\gen(f)(\sigma)]}{\cala}\right) 
\ar[u]^{\bfK(\ev(G/W)[\gen(f)(\sigma)])} \ar[d]_{\bfK(j(G/W)[\gen(f)(\sigma)]_*)}
\\
\bfK\left(\intgf{\calg^G(G/V)}{\cala}\right) \ar[r]^{\bfK(\calg^G(f)_*)} 
&
\bfK\left(\intgf{\calg^G(G/W)}{\cala}\right) 
}
\]

Thus we obtain a map of spectra
\[
\bfNK(f;\cala,\sigma) \colon 
\bfNK\bigl(G/V;\cala,\sigma\bigr) 
\to
\bfNK\bigl(G/W;\cala,\gen(f)(\sigma)\bigr) 
\]
such that the following diagram of spectra commutes
\[
\xymatrix@!C=13em{\bfNK\bigl(G/V;\cala,\sigma\bigr) 
\ar[r]^-{\bfNK\bigl(f;\cala,\sigma\bigr)}
\ar[d]_{\bfj(G/V;\cala,\sigma)}
&
\bfNK\bigl(G/W;\cala,\gen(f)(\sigma)\bigr) 
\ar[d]^{\bfj(G/W;\cala,\gen(f)(\sigma))}
\\
\bfK\left(\intgf{\calg^G(G/V)}{\cala}\right)
\ar[r]_-{\bfK(\calg^G(f)_*)}
&
\bfK\left(\intgf{\calg^G(G/W)}{\cala}\right)
}
\]

Let $\VCyc_I$ be the family of subgroups of $G$ which consists of all
finite groups and all virtually cyclic subgroups of type I.  Let
$\OrGF{G}{\VCyc_I}$ be the full subcategory of the orbit category
$\OrG{G}$ consisting of objects $G/V$ for which $V$ belongs to
$\VCyc_I$. Define a functor
\[
\bfNK^G_{\cala} \colon \OrGF{G}{\VCyc_I} \to \Spectra
\]
as follows. It sends $G/H$ for a finite subgroup $H$ to the trivial spectrum and
$G/V$ for a virtually cyclic subgroup $V$ of type~I to 
$\bigvee_{\sigma \in   \gen(Q_V)} \bfNK\bigl(G/V;\cala,\sigma\bigr)$.  Consider a map 
$f \colon G/V \to G/W$.  If $V$ or $W$ is finite, it is sent to the trivial map.  Suppose that
both $V$ and $W$ are infinite virtually cyclic subgroups of type~I. Then it is
sent to the wedge of the two maps
\begin{eqnarray*}
\bfNK(f;\cala,\sigma_1) \colon 
\bfNK\bigl(G/V;\cala,\sigma_1\bigr)
& \to &
\bfNK\bigl(G/W;\cala,\gen(f)(\sigma_1)\bigr); 
\\
\bfNK(f;\cala,\sigma_2) \colon 
\bfNK\bigl(G/V;\cala,\sigma_2\bigr) 
& \to &
\bfNK\bigl(G/W;\cala,\gen(f)(\sigma_2)\bigr), 
\end{eqnarray*}
for $\gen(Q_V) = \{\sigma_1, \sigma_2\}$.

The restriction of the covariant $\Or(G)$-spectrum $\bfK_{\cala}^G \colon \Or(G) \to \Spectra$
to $\OrGF{G}{\VCyc_I}$ will be denoted by the same symbol
\[
\bfK_{\cala}^G \colon \OrGF{G}{\VCyc_I} \to \Spectra.
\]
The wedge of the maps
$\bfj(G/V;\cala,\sigma_1)$ and $\bfj(G/V;\cala,\sigma_2)$ for $V$ a virtually cyclic subgroup of $G$ 
of type~I yields a map of spectra $\bfNK^G_{\cala}(G/V) \to \bfK^G_{\cala}(G/V)$.
Thus we obtain a transformation of functors from
$\OrGF{G}{\VCyc_I}$ to $\Spectra$
\begin{eqnarray}
& \bfb_{\cala}^G \colon \bfNK^G_{\cala} \to \bfK^G_{\cala}.&
\label{bG_A}
\end{eqnarray}


\section{Splitting the relative assembly map and identifying the relative term}
\label{sec:Splitting_the_relative_assembly_map_and_identifying_the_relative_term}

Let $X$ be a $G$-space. It defines a contravariant $\Or(G)$-space $O^G(X)$, i.e.,
a contravariant functor from $\Or(G)$ to the category of spaces, by sending
$G/H$ to the $H$-fixed point set $\map_G(G/H,X) = X^H$. Let $O^G(X)_+$ be the
pointed $\Or(G)$-space, where $O^G(X)_+(G/H)$ is obtained from $O^G(X)(G/H)$ by
adding an extra base point. If $f \colon X \to Y$ is a $G$-map, we obtain a natural
transformation $O^G(f)_+ \colon O^G(X)_+ \to O^G(Y)_+$.

Let $\bfE$ be a covariant $\Or(G)$-spectrum, i.e., a covariant functor from
$\Or(G)$ to the category of spectra.  Fix a $G$-space $Z$. Define the covariant
$\Or(G)$-spectrum
\[
\bfE_Z \colon \Or(G)\to \Spectra
\]
as follows. It sends an object $G/H$ to the spectrum 
$O^G(G/H \times Z)_+ \wedge_{\Or(G)} \bfE$, where
$\wedge_{\Or(G)}$ is the wedge product of a pointed spaces and a spectrum
over a category (see~\cite[Section~1]{Davis-Lueck(1998)}, where 
$\wedge_{\Or(G)}$ is denoted by $\otimes_{\Or(G)}$). The obvious identification 
of $O^G(G/H)_+(??) \wedge_{\Or(G)} \bfE(??)$ with $\bfE(G/H)$ and the projection 
$G/H \times Z \to G/H$ yields a natural transformation of covariant 
functors $\Or(G) \to \Spectra$
\begin{eqnarray}
& \bfa \colon \bfE_Z \to \bfE. &
\label{def_of_bfa}
\end{eqnarray}

\begin{lemma} \label{lem:bfE_and_bfEunderbar}
Given a $G$-space $X$, there exists an isomorphism of spectra
\[
 \bfu^G(X) \colon  O^G(X \times Z)_+ \wedge_{\Or(G)} \bfE
\xrightarrow{\cong}
O^G(X)_+  \wedge_{\Or(G)} \bfE_Z,
\]
which is natural in $X$ and $Z$.
\end{lemma}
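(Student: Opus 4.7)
The plan is to exhibit both spectra as iterated coends over the orbit category and then collapse one of the coends via a Yoneda-style identification. Recall that by definition
\[
O^G(Y)_+ \wedge_{\Or(G)} \bfF
\;=\; \int^{G/H \in \Or(G)} Y^H_+ \wedge \bfF(G/H)
\]
for any $G$-space $Y$ and any covariant $\Or(G)$-spectrum $\bfF$. Applying this formula twice, the target of $\bfu^G(X)$ unfolds to the iterated coend
\[
O^G(X)_+ \wedge_{\Or(G)} \bfE_Z
\;=\; \int^{G/H} X^H_+ \wedge \Bigl( \int^{G/K} (G/H \times Z)^K_+ \wedge \bfE(G/K) \Bigr).
\]

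First I would commute the two coends (which is formal, since smashing on the left with $X^H_+$ preserves colimits in each variable) to rewrite this as
\[
\int^{G/K} \Bigl( \int^{G/H} X^H_+ \wedge (G/H \times Z)^K_+ \Bigr) \wedge \bfE(G/K).
\]
Next I would use the elementary identification $(G/H \times Z)^K = (G/H)^K \times Z^K$ and pull the factor $Z^K_+$ outside the inner coend to obtain
\[
\int^{G/K} Z^K_+ \wedge \Bigl( \int^{G/H} X^H_+ \wedge (G/H)^K_+ \Bigr) \wedge \bfE(G/K).
\]
The inner coend is now the coend pairing of the contravariant $\Or(G)$-space $X^{(-)}$ with the representable covariant $\Or(G)$-space $(G/H) \mapsto (G/H)^K = \map_G(G/K, G/H)$. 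By the co-Yoneda lemma this coend is canonically homeomorphic to $X^K$, and the resulting $(X \times Z)^K_+ = X^K_+ \wedge Z^K_+$ factor combines to give
\[
\int^{G/K} (X\times Z)^K_+ \wedge \bfE(G/K) \;=\; O^G(X \times Z)_+ \wedge_{\Or(G)} \bfE,
\]
which is the source of $\bfu^G(X)$. Reading the chain of canonical isomorphisms in reverse defines $\bfu^G(X)$.

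The remaining task is naturality in both $X$ and $Z$. Each of the three steps above (Fubini for coends, the product decomposition on $K$-fixed points, and the Yoneda identification) is itself natural in the inputs, and naturality of $\bfu^G(X)$ in $X$ and $Z$ follows by composing these naturalities. The main obstacle I expect is not any single step—each is either formal or a pointwise identity—but rather the careful bookkeeping needed to ensure that the Fubini isomorphism and the Yoneda collapse are compatibly natural; this is essentially a check that the coend indexing variables are independent, so that one may exchange the order of integration with no sign or coherence issue.
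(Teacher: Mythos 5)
Your argument is correct and, once unpacked, is exactly the paper's: the paper's invocation of associativity of $\wedge_{\Or(G)}$ is precisely your Fubini step for iterated coends, and the paper's explicitly described natural isomorphism $O^G(X\times Z)_+ \cong O^G(X)_+ \wedge_{\Or(G)} O^G(?\times Z)_+$ is precisely the combination of your product decomposition $(G/H\times Z)^K = (G/H)^K \times Z^K$ with the co-Yoneda collapse of $\int^{G/H} X^H_+\wedge(G/H)^K_+$ onto $X^K_+$. The only difference is presentational: the paper packages the Yoneda step by writing down the unit/counit maps on elements, whereas you cite co-Yoneda directly.
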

\begin{proof}
  The smash product $\wedge_{\Or(G)}$ is associative, i.e., there is a natural
  isomorphism of spectra
\begin{multline*}
\left(O^G(X)_+(?) \wedge_{\Or(G)} O^G(?? \times Z)_+(?) \right) \wedge_{\Or(G)} \bfE(??)
\\
\xrightarrow{\cong}
O^G(X)_+(?) \wedge_{\Or(G)} \left(O^G(?? \times Z)_+(?)  \wedge_{\Or(G)} \bfE(??)\right).
\end{multline*}
There is a natural isomorphism of covariant $\Or(G)$-spaces
\[O^G(X \times Z)_+
\xrightarrow{\cong} O^G(X)_+(?) \wedge_{\Or(G)} O^G(? \times Z)_+
\]
which evaluated at $G/H$ sends $\alpha \colon G/H \to X \times Z$ to
$\bigl(\pr_1 \circ \alpha\bigr) \wedge \bigl(\id_{G/H} \times (\pr_2 \circ
\alpha)\bigr)$ if $\pr_i$ is the projection onto the $i$-th factor of $X \times
Z$. The inverse evaluated at $G/H$ sends 
$\left(\beta_1 \colon G/K \to X\right) \wedge \left(\beta_2 \colon G/H 
\to G/K \times Z\right)$ to $(\beta_1 \times \id_{Z}) \circ \beta_2$.  
The composite of these two isomorphisms yield the
desired isomorphism $\bfu^G(X)$.
\end{proof}

If $\calf$ is a family of subgroups of the group $G$, e.g., $\VCyc_I$ or the
family $\Fin$ of finite subgroups, then we denote by $\EGF{G}{\calf}$ the
classifying space of $\calf$. (For a survey on these spaces we refer
for instance to~\cite{Lueck(2005s)}.) Let $\underline{E}G$ denote the classifying space
for proper $G$-actions, or in other words, a model for $\EGF{G}{\Fin}$.  If we
restrict a covariant $\Or(G)$ spectrum $\bfE$ to $\OrGF{G}{\VCyc_I}$, we will
denote it by the same symbol $\bfE$ and analogously for $O^G(X)$.

\begin{lemma}\label{lem:isotropy_in_calf_and_wedge}
Let $\calf$ be a family of subgroups. Let $X$ be a $G$-$CW$-complex whose
isotropy groups belong to $\calf$. Let $\bfE$ be a covariant $\Or(G)$-spectrum.
Then there is a natural homeomorphism of spectra
\[
O^G(X)_+ \wedge_{\OrGF{G}{\calf}} \bfE \xrightarrow{\cong} O^G(X)_+ \wedge_{\Or(G)} \bfE.
\]
\end{lemma}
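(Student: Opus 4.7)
The plan is to show that the natural map induced by the inclusion $\OrGF{G}{\calf} \hookrightarrow \Or(G)$ is a homeomorphism by analyzing the standard coend description of both smash products as wedges of spectra modulo relations coming from morphisms in the respective indexing category. Concretely,
\[
O^G(X)_+ \wedge_{\Or(G)} \bfE \;=\; \bigvee_{G/H \in \Or(G)} O^G(X)_+(G/H) \wedge \bfE(G/H) \;\bigg/ \sim
\]
where, for each $\phi\colon G/H\to G/K$ in $\Or(G)$, one identifies $O^G(X)_+(\phi)(\alpha)\wedge y$ with $\alpha\wedge\bfE(\phi)(y)$ for $\alpha\in O^G(X)_+(G/K)$, $y\in\bfE(G/H)$, and analogously for $\wedge_{\OrGF{G}{\calf}}$.

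The key observation is that, since every isotropy group of $X$ belongs to $\calf$ and $\calf$ is closed under passage to subgroups, the fixed-point set $X^H$ is empty whenever $H\notin\calf$. Hence $O^G(X)_+(G/H) = \{*\}$ for every such $H$, so the corresponding wedge summand in $O^G(X)_+ \wedge_{\Or(G)} \bfE$ contributes only the basepoint. Therefore, already before imposing any relations, the wedge over $\Or(G)$ is canonically identified with the wedge over $\OrGF{G}{\calf}$.

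It remains to verify that passing from $\OrGF{G}{\calf}$ to $\Or(G)$ introduces no new nontrivial relations. Given a morphism $\phi\colon G/H\to G/K$ in $\Or(G)$ with $H$ or $K$ not in $\calf$, two cases occur. If $K\notin\calf$, then $O^G(X)_+(G/K) = \{*\}$, so both sides of the relation collapse to the basepoint and the relation is trivial. If $H\notin\calf$ but $K\in\calf$, then the existence of a $G$-map $G/H\to G/K$ forces $H$ to be subconjugate to $K$, and since $\calf$ is closed under subconjugation this forces $H\in\calf$, a contradiction; no such $\phi$ exists. Consequently every nontrivial relation is already present in the smash product over $\OrGF{G}{\calf}$, and the comparison map is a homeomorphism. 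Naturality in $X$ is evident from the coend description. The only nonformal point is the second case above, which is handled by the defining property of a family of subgroups.
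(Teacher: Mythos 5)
Your proof is correct. The paper gives a one-line argument: it invokes the adjunction between the induction functor $I_*$ and the restriction functor $I^*$ for the inclusion $I \colon \OrGF{G}{\calf} \to \Or(G)$ (citing~\cite[Lemma~1.9]{Davis-Lueck(1998)}), together with the observation that the counit $I_*I^*O^G(X) \to O^G(X)$ is a homeomorphism when all isotropy groups of $X$ lie in $\calf$. You instead unwind the balanced smash product as a wedge-modulo-relations (a coend), observe that the summands indexed by $G/H$ with $H\notin\calf$ are trivial because $X^H=\emptyset$, and then verify directly that the generating relations indexed by morphisms leaving $\OrGF{G}{\calf}$ create no new identifications. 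This is essentially a hand-unpacking of the same fact the paper expresses categorically: the isotropy condition forces $O^G(X)$ to be concentrated on $\OrGF{G}{\calf}$, so coending over all of $\Or(G)$ changes nothing. One point you handle well and that is genuinely needed: if a $G$-map $G/H \to G/K$ existed with $H\notin\calf$ and $K\in\calf$, the relation $\phi^*\alpha\wedge y \sim \alpha\wedge\phi_*y$ would collapse potentially nontrivial points of the $G/K$-summand (since the left-hand side is automatically the basepoint), so ruling this out via subconjugacy closure of $\calf$ is not cosmetic. Your approach is more elementary and self-contained; the paper's is shorter but relies on the Davis--L\"uck adjunction as a black box.
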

\begin{proof}
  Let $I \colon \OrGF{G}{\calf} \to \Or(G)$ be the inclusion.  The claim follows
  from the adjunction of induction $I_*$ and restriction $I^*$,
  see~\cite[Lemma~1.9]{Davis-Lueck(1998)}, and the fact that for the
  $\Or(G)$-space $O^G(X)$ the canonical map $I_*I^* O^G(X) \to O^G(X)$ is a
  homeomorphism of $\Or(G)$-spaces.
\end{proof}

In the sequel we will abbreviate $\bfE_{\underline{E}G}$ by $\bfEunderbar$.

\begin{lemma} \label{lem:E_VCyc_underbarK_versus_Efin_K} Let $\bfE$ be a
  covariant $\Or(G)$-spectrum. Let $f \colon \underline{E}G \to
  \EGF{G}{\VCyc_I}$ be a $G$-map.  (It is unique up to $G$-homotopy.) Then there
  is an up to homotopy commutative diagram of spectra whose upper horizontal
  map is a weak equivalence
\[
\xymatrix@!C=8em{
O^G(\EGF{G}{\VCyc_I}) \wedge_{\OrGF{G}{\VCyc_I}} \bfEunderbar
\ar[dr]_-{\id \wedge_{{\OrGF{G}{\VCyc_I}} } \bfa\quad \quad } \ar[rr]^-{\simeq}
& &
O^G(\underline{E}G)) \wedge_{\OrGF{G}{\VCyc_I}} \bfE
\ar[dl]^-{\quad\quad O^G(f) \wedge_{\OrGF{G}{\VCyc_I}}\id} 
\\
&
O^G(\EGF{G}{\VCyc_I}) \wedge_{\OrGF{G}{\VCyc_I}} \bfE  
&
}
\]
\end{lemma}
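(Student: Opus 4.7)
The plan is to identify the top-left corner with $O^G(\EGF{G}{\VCyc_I}\times\underline{E}G)_+ \wedge_{\Or(G)} \bfE$ via Lemma~\ref{lem:bfE_and_bfEunderbar}, and then to realize the sought-after upper horizontal map as being induced by the projection $\pr_2\colon \EGF{G}{\VCyc_I}\times\underline{E}G\to\underline{E}G$, which is a $G$-homotopy equivalence.

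First I would apply Lemma~\ref{lem:isotropy_in_calf_and_wedge} to each of the three corners of the diagram: since the $G$-spaces $\EGF{G}{\VCyc_I}$ and $\underline{E}G$ have isotropy in $\VCyc_I$, we may replace $\wedge_{\OrGF{G}{\VCyc_I}}$ by $\wedge_{\Or(G)}$ throughout without changing the spectra up to canonical isomorphism. Then Lemma~\ref{lem:bfE_and_bfEunderbar}, applied with $X=\EGF{G}{\VCyc_I}$ and $Z=\underline{E}G$, gives a natural isomorphism
\[
O^G\bigl(\EGF{G}{\VCyc_I}\bigr)_+\wedge_{\Or(G)}\bfEunderbar \;\cong\; O^G\bigl(\EGF{G}{\VCyc_I}\times\underline{E}G\bigr)_+\wedge_{\Or(G)}\bfE.
\]

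Next I would observe that $\EGF{G}{\VCyc_I}\times\underline{E}G$ has isotropy in $\VCyc_I\cap\Fin=\Fin$ and contractible $H$-fixed sets for each finite $H\leq G$, so it is itself a model for $\underline{E}G$. In particular the projection $\pr_2$ is a $G$-homotopy equivalence and thus induces a weak equivalence on $O^G(-)_+\wedge_{\Or(G)}\bfE$. Composing this weak equivalence with the identification above produces the desired upper horizontal weak equivalence.

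For the homotopy commutativity, I would trace both composites from the top-left to the bottom vertex. Unwinding the explicit description of $\bfu^G(X)$ in the proof of Lemma~\ref{lem:bfE_and_bfEunderbar}, the left diagonal $\id\wedge\bfa$ corresponds, after the identification above, to the map induced by $\pr_1\colon\EGF{G}{\VCyc_I}\times\underline{E}G\to\EGF{G}{\VCyc_I}$; on the other hand, the composite of the upper horizontal with the right diagonal $O^G(f)\wedge\id$ is induced by $f\circ\pr_2$. Since $\EGF{G}{\VCyc_I}\times\underline{E}G$ has isotropy in $\VCyc_I$, any two $G$-maps from it into $\EGF{G}{\VCyc_I}$ are $G$-homotopic, so $\pr_1\simeq_G f\circ\pr_2$, yielding the required homotopy. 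The main (though minor) obstacle will be the bookkeeping of checking that $\bfa$ really corresponds to the map induced by $\pr_1$ under the chain of natural identifications; this amounts to chasing a point through the formulas for $\bfu^G(X)$ and the projection defining $\bfa$.
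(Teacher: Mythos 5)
Your proof is correct and follows essentially the same route as the paper: identify the top-left vertex with $O^G\bigl(\EGF{G}{\VCyc_I}\times\underline{E}G\bigr)_+\wedge\bfE$ via Lemma~\ref{lem:bfE_and_bfEunderbar}, use that $\pr_2$ is a $G$-homotopy equivalence to produce the top horizontal weak equivalence, and obtain commutativity from $\pr_1\simeq_G f\circ\pr_2$ together with the identification of $\id\wedge\bfa$ with $O^G(\pr_1)\wedge\id$. The only (harmless) difference is that you explicitly invoke Lemma~\ref{lem:isotropy_in_calf_and_wedge} to pass between $\wedge_{\OrGF{G}{\VCyc_I}}$ and $\wedge_{\Or(G)}$, which the paper leaves tacit when restricting Lemma~\ref{lem:bfE_and_bfEunderbar} to the subcategory.
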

\begin{proof}
From Lemma~\ref{lem:bfE_and_bfEunderbar} we obtain a commutative diagram
with an isomorphism as horizontal map
\[
\xymatrix@!C=9em{O^G(\EGF{G}{\VCyc_I}) \wedge_{\OrGF{G}{\VCyc_I}} \bfEunderbar
\ar[rd]_-{\id \wedge_{\OrGF{G}{\VCyc_I}} \bfa \quad \quad} \ar[rr]^{\cong}
& &
O^G(\EGF{G}{\VCyc_I} \times \underline{E}G) \wedge_{\OrGF{G}{\VCyc_I}} \bfE 
\ar[ld]^-{\quad \quad O^G(\pr_1) \wedge_{\OrGF{G}{\VCyc_I}} \id}
\\
& 
O^G(\EGF{G}{\VCyc_I}) \wedge_{\OrGF{G}{\VCyc_I}} \bfE 
&
}
\]
where $\pr_1 \colon \EGF{G}{\VCyc_I} \times \underline{E}G) \to
\EGF{G}{\VCyc_I}$ is the obvious projection. The projection $\pr_2 \colon
\EGF{G}{\VCyc_I} \times \underline{E}G \to \underline{E}G$ is a $G$-homotopy
equivalence and its composite with $f \colon \underline{E}G \to
\EGF{G}{\VCyc_I}$ is $G$-homotopic to $\pr_1$.  Hence the following diagram of
spectra commutes up to $G$-homotopy and has a weak equivalence as horizontal
map.
\[\xymatrix@!C=9em{O^G(\EGF{G}{\VCyc_I} \times \underline{E}G) \wedge_{\OrGF{G}{\VCyc_I}} \bfE 
\ar[rr]^-{O^G(\pr_2) \wedge_{\OrGF{G}{\VCyc_I}} \id}_-{\simeq} \ar[dr]_{O^G(\pr_1) \wedge_{\OrGF{G}{\VCyc_I}} \id\quad \quad}
& &
O^G(\underline{E}G)) \wedge_{\OrGF{G}{\VCyc_I}} \bfE \ar[ld]^{\quad \quad O^G(f) \wedge_{\OrGF{G}{\VCyc_I}} \id} 
\\
&
O^G(\EGF{G}{\VCyc_I}) \wedge_{\OrGF{G}{\VCyc_I}} \bfE
&
}
\]
Putting these two diagrams together, finishes the proof of 
Lemma~\ref{lem:E_VCyc_underbarK_versus_Efin_K}
\end{proof}

If $\bfE$ is the functor $\bfK^G_{\cala}$ defined in~\eqref{K-theoretic_Or(G)-spectrum} 
and $Z = \underline{E}G$, we will write 
$\bfKunderbar^G_\cala$ for $\underline{\bfE} = \bfE_{\underline{E}G}$.

\begin{lemma} \label{bfa_wedge_bfb_weak_equiv}
Let $H$ be a finite group or an infinite virtually cyclic group of type~I.
Then the map of spectra (see~\eqref{bG_A}  and~\eqref{def_of_bfa})
\[
\bfa(G/H) \vee \bfb(G/H) \colon \bfKunderbar^G_{\cala}(G/H) \vee \bfNK^G_{\cala}(G/H)  
\to \bfK^G_{\cala}(G/H)
\]
is a weak equivalence.
\end{lemma}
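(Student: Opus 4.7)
The plan is to handle the two cases separately.

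First, if $H$ is finite, then by definition $\bfNK^G_{\cala}(G/H)$ is the trivial spectrum, so the claim reduces to showing that $\bfa(G/H)$ is a weak equivalence. Since every subgroup of $H$ is finite, the fixed-point set $(\underline{E}G)^K$ is contractible for every subgroup $K$ of $H$, so $\underline{E}G|_H$ is $H$-equivariantly contractible. Hence the projection $G/H \times \underline{E}G \cong G \times_H \underline{E}G|_H \to G/H$ is a $G$-homotopy equivalence, and naturality of $\wedge_{\Or(G)}$ yields the claim.

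For $H = V$ of type~I, the strategy is to reduce the statement to the twisted Bass--Heller--Swan decomposition (Theorem~\ref{the:Twisted_Bass-Heller-Swan_decomposition_for_additive_categories}) applied to $\calb := \intgf{\widehat{K_V}}{\cala}$ with the $\IZ$-action $\Phi$ associated to a generator $\sigma \in \gen(Q_V)$. Diagram~\eqref{mapping_torus}, together with Lemma~\ref{lem:(F_1)_ast_and_int_calg_S_and_equivalences}, produces a natural zigzag of equivalences $\bfK^G_{\cala}(G/V) \simeq \bfK\bigl(\intgf{\widehat{Q_V}}{\calb}\bigr)$, and Theorem~\ref{the:Twisted_Bass-Heller-Swan_decomposition_for_additive_categories} splits the latter as $T_{\bfK(\Phi)} \vee \bfNK(\calb)$ via $\bfa_{\calb} \vee \bfb_{\calb}$. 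What remains is to match the two summands with $\bfKunderbar^G_{\cala}(G/V)$ and $\bfNK^G_{\cala}(G/V)$ respectively, compatibly with the maps.

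For the $\bfNK$-summand, applying $\bfK$ to the vertical equivalences in diagram~\eqref{passage_to_calb} and passing to homotopy fibres of the horizontal retractions yields natural equivalences $\bfNK(\calb, \sigma) \simeq \bfNK(G/V; \cala, \sigma)$ for each $\sigma$, and hence $\bfNK(\calb) \simeq \bfNK^G_{\cala}(G/V)$; by construction of the maps $\bfb_{\calb}$ and $\bfb(G/V)$, this equivalence carries the former to the latter. For the mapping-torus summand, Lemma~\ref{lem:isotropy_in_calf_and_wedge} rewrites $\bfKunderbar^G_{\cala}(G/V)$ as the $G$-homology of $G/V \times \underline{E}G$, which is $G$-homotopy equivalent to $G \times_V \underline{E}V$. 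Because $V$ is of type~I, a model for $\underline{E}V$ is $\IR$ with $V$-action through $p_V \colon V \to Q_V \cong \IZ$, equipped with a $V$-$CW$-structure having exactly one $V/K_V$-orbit of $0$-cells and one of $1$-cells and attaching map $\id \sqcup R_{\overline\sigma}$. The resulting homotopy pushout identifies $\bfKunderbar^G_{\cala}(G/V)$ with the mapping torus of the induced self-map of $\bfK^G_{\cala}(G/K_V) \simeq \bfK(\calb)$, which by diagram~\eqref{mapping_torus} corresponds to $\bfK(\Phi)$.

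The principal obstacle will be checking that under the identification $\bfKunderbar^G_{\cala}(G/V) \simeq T_{\bfK(\Phi)}$ just constructed, the natural transformation $\bfa(G/V)$ corresponds to $\bfa_{\calb}$. This amounts to tracing the two endpoint inclusions of $\bfK(\calb)$ into the mapping torus through diagrams~\eqref{transport_evaluated_at_e}, \eqref{Z-action_on_calg(G/K)} and~\eqref{mapping_torus}, and verifying that the projection $\underline{E}V \to \pt$ induces on both sides the canonical map out of the mapping torus determined by the structural morphisms $T_{\overline\sigma}$ in the homotopy colimit. Once this bookkeeping is in place, the BHS equivalence $\bfa_{\calb} \vee \bfb_{\calb}$ translates directly into the required weak equivalence $\bfa(G/V) \vee \bfb(G/V)$.
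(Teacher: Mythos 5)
Your proposal matches the paper's own proof essentially step for step: the finite case is handled by the $G$-homotopy equivalence $G/H\times\underline{E}G\to G/H$, and the type~I case is reduced to Theorem~\ref{the:Twisted_Bass-Heller-Swan_decomposition_for_additive_categories} applied to $\calb=\intgf{\widehat{K_V}}{\cala}$ by identifying $\bfKunderbar^G_{\cala}(G/V)$ with the mapping torus $T_{\bfK(\Phi)}$ (via the induction/restriction/coinduction adjunctions of~\cite[Lemma~1.9]{Davis-Lueck(1998)} and the $\IR$-model for $\underline{E}V$) and $\bfNK^G_{\cala}(G/V)$ with $\bfNK(\calb)$ via diagram~\eqref{passage_to_calb}. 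The bookkeeping you flag at the end --- checking that $\bfa(G/V)$ corresponds to $\bfa_{\calb}$ under these identifications --- is exactly what the paper's chain of commutative diagrams (especially the one built from~\eqref{mapping_torus}) is doing, so your outline is the right one.
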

\begin{proof}
Given an infinite cyclic subgroup $V \subseteq G$ of type~I, we  next 
construct the following up to homotopy commutative diagram of spectra
whose vertical arrows are all weak homotopy equivalences for $K = K_V$ and $Q = Q_V$.
Let $i_V \colon V \to G$ be the inclusion and $p_V \colon V \to Q_V :=V/K_V$ be the projection.
\[
\xymatrix@!C= 22em{
\bfKunderbar^G_{\cala}(G/V) \ar[d]^{\id}_{(1)} \ar@/^{20mm}/[rdddd]^{\bfa(G/V)}
&  
\\
O^G(G/V \times \underline{E}G)_+ \wedge_{\Or(G)} \bfK^G_{\cala}
\ar[d]^{\cong}_{(2)}
&
\\
O^G((i_V)_*(i_V)^*\underline{E}G)_+ \wedge_{\Or(G)} \bfK^G_{\cala}
\ar[d]^{\cong}_{(3)}
&
\\
O^V((i_V)^*\underline{E}G)_+ \wedge_{\Or(V)} (i_V)^*\bfK^G_{\cala}
\ar[d]^{\simeq}_{(4)}
&
\\
O^V((p_V)^*EQ_V)_+ \wedge_{\Or(V)} (i_V)^*\bfK^G_{\cala}
\ar[d]^{\cong}_{(5)}
& \bfK_{\cala}^G(G/V)
\\
O^{Q_V}(EQ_V)_+ \wedge_{\Or(Q_V)} (p_V)_! (i_V)^*\bfK^G_{\cala}
\ar[d]^{\cong}_{(6)}
&
\\
(EQ_V)_+ \wedge_{Q_V} (p_V)_! (i_V)^*\bfK^G_{\cala}(Q_V/1)
\ar[d]^{\cong}_{(7)}
&
\\
(EQ_V)_+ \wedge_{Q_V} \bfK^G_{\cala}(G/K)
\ar[d]^{\cong}_{(8)}
&
\\
T_{\bfK(R_{\sigma}) \colon \bfK^G_{\cala}(G/K) \to \bfK^G_{\cala}(G/K)}
\ar@/_{20mm}/[ruuuu]_{\bfa'(G/V)}
&
}
\]
We first explain the vertical arrow starting at the top.  The first one is the
identity by definition. The second one comes from the $G$-homeomorphism 
$G/V \times \underline{E}G \xrightarrow{\cong} (i_V)_*(i_V)^*\underline{E}G = G
\times_V \underline{E}G$ sending $(gV,x)$ to $(g,g^{-1}x)$. The third one comes
from the adjunction of induction $(i_V)_*$ and restriction $i_V^*$,
see~\cite[Lemma~1.9]{Davis-Lueck(1998)}. The fourth one comes from the fact that
$p_V^*EQ$ and $i_V^*\underline{E}G$ are both models for $\underline{E}V$ and
hence are $V$-homotopy equivalent.  The fifth one comes from the adjunction of
restriction $p_V^*$ with coinduction ${(p_V)_!}$,
see~\cite[Lemma~1.9]{Davis-Lueck(1998)}. The sixth one comes from the fact that
$EQ$ is a free $Q$-$CW$-complex and Lemma~\ref{lem:isotropy_in_calf_and_wedge}
applied to the family consisting of one subgroup, namely the trivial
subgroup. The seventh one comes from the identification $(p_V)_!
(i_V)^*\bfK^G_{\cala}(Q_V/1) = (i_V)^*\bfK^G_{\cala}(V/K) = \bfK^G_{\cala}(G/K)$.
The last one comes from the obvious homeomorphism if we use for $EQ_V$ the
standard model with $\IR$ as underlying $Q_V = \IZ$-space.  The arrow
$\bfa'(G/V)$ is induced by the upper triangle in~\eqref{mapping_torus}, which
commutes (strictly).  One easily checks that the diagram above commutes.

Here is a short explanation on the diagram above.  The map $\bfa(G/V)$ is basically given
by the projection $G/V\times \underbar EG \to G/V$. Following the equivalences (1) through
(5), this corresponds to projecting $EQ_V$ to a point. On the domain of equivalence (8),
this corresponds to projecting $EQ_V$ to a point and to take the inclusion-induced map
$\bfK^G_\cala(G/K)\to \bfK^G_\cala(G/V)$ on the other factor. But this is precisely the
definition of the map $\bfa'(G/V)$.

>From the diagram~\eqref{mapping_torus} (including the preferred equivalences and
the fact that a natural isomorphism of functors induces a preferred homotopy  after applying the
$K$-theory spectrum) we obtain the following diagram of
spectra which commutes up to homotopy and has weak homotopy equivalences as
vertical arrows.
\[
\xymatrix@!C=12em{T_{\bfK(R_{\sigma}) \colon \bfK^G_{\cala}(G/K) \to \bfK^G_{\cala}(G/K)} 
\ar[r]^-{\bfa'(G/V)}
&  
\bfK_{\cala}^G(G/V)
\\
T_{\bfK(\Phi) \colon \bfK\left(\intgf{\widehat{K}}{\cala}\right) \to \bfK\left(\intgf{\widehat{K}}{\cala}\right)} \ar[u]^{\simeq}
\ar[r]^-{\bfa''(G/V)}
&
\bfK\left(\intgf{\widehat{V}}{\cala}\right) \ar[u]^{\simeq}
\\
T_{\bfK(\phi) \colon \bfK(\calb) \to \bfK(\calb)} \ar[u]^{\id}
\ar[r]^-{\bfa_{\calb}}
&
\bfK\left(\intgf{\widehat{Q}}{\calb}\right) \ar[u]^{\cong}
}
\]
We obtain from the diagram~\eqref{passage_to_calb} the following commutative
diagram of spectra with weak homotopy equivalences as vertical arrows.
\[
\xymatrix@!C=10em{\bfNK^G_{\cala}(G/V)  
\ar[r]^{\bfb(G/V)}
& 
\bfK^G_{\cala}(G/V)
\\
\bfNK(\calb) \ar[r]^{\bfb_{\calb}} \ar[u]^{\simeq}
&
\bfK(\intgf{\widehat{Q_V}}{\calb}) \ar[u]^{\simeq}
}
\]
We conclude from the three diagrams of spectra above that
\[
\bfa(G/V) \vee \bfb(G/V) \colon \bfKunderbar^G_{\cala}(G/V) \vee \bfNK^G_{\cala}(G/V)  
\to \bfK^G_{\cala}(G/V)
\]
is a weak homotopy of spectra if and only if 
\[
\bfa_{\calb} \vee \bfb_{\calb} \colon T_{\bfK(\phi) \colon \bfK(\calb) \to \bfK(\calb)} \vee \bfNK(\calb) 
\to \bfK\left(\intgf{\widehat{Q_V}}{\calb}\right) 
\]
is a weak homotopy equivalence. Since this is just the assertion of
Theorem~\ref{the:Twisted_Bass-Heller-Swan_decomposition_for_additive_categories},
the claim of Lemma~\ref{bfa_wedge_bfb_weak_equiv} follows in the case, where $H$
is an infinite virtually cyclic group of type~I.

It remains to consider the case, where $H$ is finite. Then  $\bfNK^G_{\cala}(G/V)$ is by definition
the trivial spectrum. Hence it remains to show for a finite subgroup $H$ of $G$ that
$\bfa(G/H) \colon \bfKunderbar^G_{\cala}(G/H) 
\to \bfK^G_{\cala}(G/H)$ is a weak homotopy equivalence. This follows from the fact
that the projection $G/H \times \underline{E}G \to G/H$ is a $G$-homotopy equivalence for finite $H$.
\end{proof}

Recall that any covariant $\Or(G)$-spectrum $\bfE$ determines a $G$-homology theory $H_*^G(-;\bfE)$
satisfying $H_n^G(G/H;\bfE) = \pi_n(\bfE(G/H))$, namely put (see~\cite{Davis-Lueck(1998)})
\begin{eqnarray}
H_*^G(X;\bfE) & := & \pi_*\bigl(O^G(X) \wedge_{\Or(G)} \bfE\bigr).
\label{H_G(X;bfE)}
\end{eqnarray}

In the sequel we often follow the convention in the literature to abbreviate
$\underline{\underline{E}}G := \EGF{G}{\VCyc}$ for the family $\VCyc$ of
virtually cyclic subgroups.  Recall that for two families of subgroups $\calf_1$
and $\calf_2$ with $\calf_1 \subseteq \calf_2$ there is up to $G$-homotopy one $G$-map 
$f \colon \EGF{G}{\calf_1} \to \EGF{G}{\calf_2}$. We will define 
$H_n(\EGF{G}{\calf_1} \to \EGF{G}{\calf_2};\bfK^G_R):= H_n(\cyl(f),\EGF{G}{\calf_1};\bfK^G_R)$, 
where $(\cyl(f),\EGF{G}{\calf_1})$ is the $G$-pair coming from the mapping cylinder of $f$.

Notice that $\bfNK^G_{\cala}$ is defined only over $\OrGF{G}{VCyc_I}$. It
can be extended to a spectrum over $\Or(G)$ by applying the coinduction functor
(see~\cite [Definition~1.8]{Davis-Lueck(1998)})
associated to the inclusion $\OrGF{G}{\VCyc_I} \to \Or(G)$ so that the $G$-homology theory
$H_n^G(-;\bfNK^G_{\cala})$ makes sense for all pairs $(X,A)$ of $G$-$CW$-complexes.
Moreover, $H_n^G(X;\bfNK^G_{\cala})$ can be identified with 
$\pi_n\bigl(O^G(X) \wedge_{\OrGF{G}{\VCyc_I}} \bfNK^G_{\cala}\bigr)$ 
for all  $G$-$CW$-complexes $X$.

The remainder of this section is devoted to the proof of 
Theorem~\ref{the:Splitting_the_K-theoretic_assembly_map_from_Fin_to_VCyc}.
Its proof will need the following result  taken 
from~\cite[Remark~1.6]{Davis-Quinn-Reich(2011)}.

\begin{theorem}[Passage from $\VCyc_I$ to $\VCyc$ in $K$-theory]
\label{the:Passage_from_Vcyc_I_to_Vcyc_K-theory}
The relative assembly map
\[
H_n^G\bigl(\EGF{G}{\VCyc_I};\bfK_{\cala}^G\bigr) 
\xrightarrow{\cong} H_n^G\bigl(\underline{\underline{E}}G;\bfK_{\cala}^G\bigr)
\]
is bijective for all $n \in \IZ$.
\end{theorem}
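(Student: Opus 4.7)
The plan is to apply the Transitivity Principle for equivariant homology theories (see, e.g.,~\cite[Theorem~65]{Lueck-Reich(2005)}) to the pair of families $\VCyc_I \subseteq \VCyc$. This reduces the claim to showing that for every subgroup $V\subseteq G$ which is infinite virtually cyclic of type~II, the local relative assembly
\[
H_n^V\bigl(\EGF{V}{\VCyc_I \cap V};\bfK^V_{\cala}\bigr)
\to K_n\left(\intgf{\widehat V}{\cala}\right)
\]
is an isomorphism for all $n\in\IZ$. Here $\VCyc_I \cap V$ denotes the family of subgroups of $V$ that lie in $\VCyc_I$; by Lemma~\ref{lem:types_of_virtually_cyclic_groups} this family consists of all finite subgroups of $V$ together with all subgroups of the unique index-$2$ subgroup $V'\subseteq V$ of type~I.

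To analyze this local assembly, I would use that a choice of lift of the two generating involutions of $D_\infty$ exhibits $V$ as an amalgamated product $V = V_1 *_{K_V} V_2$ with $V_1$, $V_2$ finite, each containing $K_V$ with index~$2$. The Bass--Serre tree $\IR$ of this decomposition is a one-dimensional $V$-$CW$ model for $\EGF{V}{\Fin}$, with two orbits of vertices of isotropy $V_1$, $V_2$ and one orbit of edges of isotropy $K_V$. A model for $\EGF{V}{\VCyc_I\cap V}$ can then be built from $\EGF{V}{\Fin}$ by universally making the $V'$-fixed set contractible; cellularly, this amounts to a single equivariant pushout attaching cells of orbit type $V/V'$. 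Applying $K$-theory to the resulting pushout produces a Mayer--Vietoris long exact sequence that expresses $H_n^V\bigl(\EGF{V}{\VCyc_I\cap V};\bfK^V_\cala\bigr)$ in terms of the $K$-theory spectra of $\intgf{\widehat{V_i}}{\cala}$, $\intgf{\widehat{K_V}}{\cala}$, and $\intgf{\widehat{V'}}{\cala}$.

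On the other hand, the Waldhausen-style Bass--Heller--Swan decomposition for amalgamated products, suitably extended to additive $G$-categories in the same spirit as Theorem~\ref{the:Twisted_Bass-Heller-Swan_decomposition_for_additive_categories}, yields a long exact sequence computing $K_*\bigl(\intgf{\widehat V}{\cala}\bigr)$ from the same finite and type-I constituents together with a Waldhausen Nil-term. Comparing the two long exact sequences via the Five Lemma then reduces the theorem to identifying this Waldhausen Nil-term with the ``excess'' $V'$-contribution $\bfNK\bigl(\intgf{\widehat{V'}}{\cala}\bigr)$ that appears in the Mayer--Vietoris sequence beyond the $\EGF{V}{\Fin}$-part. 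The main obstacle is precisely this identification in the additive $G$-category setting; in the ring-coefficient case it is essentially the content of~\cite[Remark~1.6]{Davis-Quinn-Reich(2011)}, on which the proof may of course fall back directly.
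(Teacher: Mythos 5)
The paper does not actually prove this theorem: it treats it as an external input, stating just before the theorem that ``Its proof will need the following result taken from \cite[Remark~1.6]{Davis-Quinn-Reich(2011)}.''\ So there is no internal argument in the paper to compare yours against; Theorem~\ref{the:Passage_from_Vcyc_I_to_Vcyc_K-theory} is simply imported as a black box from Davis--Quinn--Reich, and the remainder of the paper only handles the passage from $\Fin$ to $\VCyc_I$.

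Your sketch (Transitivity Principle, Bass--Serre tree of the amalgamated product $V = V_1 *_{K_V} V_2$, comparison of a Mayer--Vietoris sequence with a Waldhausen-style splitting for amalgamated products) is a reasonable roadmap toward a proof from first principles, and it is close in spirit to the structure that underlies the Davis--Quinn--Reich argument. But you also acknowledge at the end that the crux -- identifying the Waldhausen Nil-term with the residual $V'$-contribution, in the additive-$G$-category setting -- is precisely the content of \cite[Remark~1.6]{Davis-Quinn-Reich(2011)}, and that you would fall back on that. So in the end your proposal and the paper rely on exactly the same external result; the paper just invokes it directly without the preliminary reduction. Two cautionary remarks on the details you do supply: (a) the description of a model for $\EGF{V}{\VCyc_I\cap V}$ as ``a single equivariant pushout attaching cells of orbit type $V/V'$'' to $\underline{E}V$ needs more justification. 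Since $V'$ is the unique maximal element of $\VCyc_I\cap V$ that is not finite and $N_V V' = V$, the L\"uck--Weiermann pushout of \cite[Corollary~2.8]{Lueck-Weiermann(2012)} degenerates (the normalizer is all of $V$), so it does not directly produce the cell structure you describe; one would have to build a model by hand and verify the isotropy conditions. (b) The Waldhausen splitting for amalgamated products with coefficients in an additive $G$-category is itself something that must be established, since Theorem~\ref{the:Twisted_Bass-Heller-Swan_decomposition_for_additive_categories} only covers the HNN/twisted-Laurent case. Both of these gaps are however filled by appealing to \cite{Davis-Quinn-Reich(2011)}, which is what the paper does.
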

Hence we only have to deal in the proof of Theorem~\ref{the:Splitting_the_K-theoretic_assembly_map_from_Fin_to_VCyc}
with the passage from $\Fin$ to $\VCyc_I$.

\begin{proof}[Proof of Theorem~\ref{the:Splitting_the_K-theoretic_assembly_map_from_Fin_to_VCyc}]
From  Lemma~\ref{bfa_wedge_bfb_weak_equiv} and~\cite[Lemma~4.6]{Davis-Lueck(1998)},
we obtain a weak equivalence of spectra
\begin{multline*}
\id \wedge_{\OrGF{G}{\VCyc_I}} (\bfa\vee \bfb) \colon 
O^G(\EGF{G}{\VCyc_I}) \wedge_{\OrGF{G}{\VCyc_I}}  (\bfKunderbar^G_{\cala} \vee \bfNK^G_{\cala}) 
\\
\to   O^G(\EGF{G}{\VCyc_I}) \wedge_{\OrGF{G}{\VCyc_I}}  \bfK^G_{\cala}\bigr).
\end{multline*}
Hence we obtain a weak equivalence of spectra
\begin{multline*}
\bigl(\id \wedge_{\OrGF{G}{\VCyc_I}} \bfa\bigr) \vee \bigl(\id \wedge_{\OrGF{G}{\VCyc_I}} \bfb\bigr) \colon
\\
\bigl( O^G(\EGF{G}{\VCyc_I}) \wedge_{\OrGF{G}{\VCyc_I}}  \bfKunderbar^G_{\cala}\bigr) \vee
\bigl( O^G(\EGF{G}{\VCyc_I}) \wedge_{\OrGF{G}{\VCyc_I}}  \bfNK^G_{\cala}\bigr)
\\
\to O^G(\EGF{G}{\VCyc_I}) \wedge_{\OrGF{G}{\VCyc_I}}  \bfK^G_{\cala}.
\end{multline*}
If we combine this with Lemma~\ref{lem:E_VCyc_underbarK_versus_Efin_K} we obtain
a weak equivalence of spectra
\begin{multline*}
\bigl(f \wedge_{\OrGF{G}{\VCyc_I}} \id\bigr) \vee \bigl(\id \wedge_{\OrGF{G}{\VCyc_I}} \bfb\bigr) \colon
\\
\bigl( O^G(\underline{E}G) \wedge_{\OrGF{G}{\VCyc_I}}  \bfK^G_{\cala}\bigr) \vee
\bigl( O^G(\EGF{G}{\VCyc_I}) \wedge_{\OrGF{G}{\VCyc_I}}  \bfNK^G_{\cala}\bigr)
\\
\to O^G(\EGF{G}{\VCyc_I}) \wedge_{\OrGF{G}{\VCyc_I}}  \bfK^G_{\cala}.
\end{multline*}
Using Lemma~\ref{lem:isotropy_in_calf_and_wedge} this yields a natural weak equivalence of spectra
\begin{multline*}
\bigl(f \wedge_{\Or(G)} \id\bigr) \vee \bfb' \colon
\\
\bigl( O^G(\underline{E}{G}) \wedge_{\Or(G)}  \bfK^G_{\cala}\bigr) \vee
\bigl( O^G(\EGF{G}{\VCyc_I}) \wedge_{\OrGF{G}{\VCyc_I}}  \bfNK^G_{\cala}\bigr)
\\
\to O^G(\EGF{G}{\VCyc_I}) \wedge_{\Or(G)}  \bfK^G_{\cala},
\end{multline*}
where $\bfb'$ comes from $\id \wedge_{\OrGF{G}{\VCyc_I}} \bfb$. If we take
homotopy groups, we obtain for every $n \in \IZ$ an isomorphism
\begin{multline*}
H_n^G(f;\bfK^G_{\cala}) \oplus \pi_n(\bfb')  \colon H_n^G(\underline{E}G;\bfK^G_{\cala}) 
\oplus \pi_n\bigl( O^G(\EGF{G}{\VCyc_I}) \wedge_{\OrGF{G}{\VCyc_I}}  \bfNK^G_{\cala}\bigr) 
\\
\xrightarrow{\cong} H_n\bigl(\EGF{G}{\VCyc_I};\bfK^G_{\cala}\bigr).
\end{multline*}
We have already explained above that
$H_n^G(\EGF{G}{\VCyc_I};\bfNK^G_{\cala})$ can be identified with $\pi_n\bigl(
O^G(\EGF{G}{\VCyc_I}\wedge_{\OrGF{G}{\VCyc_I}} \bfNK^G_{\cala}\bigr)$.  Since
by construction $\bfNK^G_{\cala}(G/H)$ is the trivial spectrum for finite $H$
and all isotropy groups of $\underline{E}G$ are finite, we conclude
$H_n^G(\underline{E}G;\bfNK^G_{\cala}) = 0$ for all $n \in \IZ$ from Lemma~\ref{lem:isotropy_in_calf_and_wedge}. 
We derive from the long exact sequence of $f \colon \underline{E}(G) \to \EGF{G}{\VCyc_I}$
that the canonical map
\[
H_n^G\bigl(\EGF{G}{\VCyc_I};\bfNK^G_{\cala}\bigr)
\xrightarrow{\cong} H_n^G\bigl(\underline{E}G \to \EGF{G}{\VCyc_I};\bfNK^G_{\cala}\bigr)
\]
is bijective for all $n \in \IZ$.
Hence we obtain for all $n \in \IZ$ a natural isomorphism
\begin{multline*}
H_n^G(f;\bfK^G_{\cala}) \oplus b_n
\colon H_n^G\bigl(\underline{E}G;\bfK^G_{\cala}\bigr) 
\oplus H_n^G\bigl(\underline{E}G \to \EGF{G}{\VCyc_I};\bfNK^G_{\cala}\bigr)
\\
\xrightarrow{\cong} H_n\bigl(\EGF{G}{\VCyc_I};\bfK^G_{\cala}\bigr).
\end{multline*}
We conclude from the long exact homology sequence associated 
to $f \colon \underline{E}G \to \EGF{G}{\VCyc_I}$
that the map 
\[
H_n^G(f;\bfK_{\cala}^G)  \colon H_n^G(\underline{E}G;\bfK_{\cala}^G) \to  H_n^G(\EGF{G}{\VCyc_I};\bfK_{\cala}^G)
\]
is split injective, there is a natural splitting
\[
H_n^G\bigl(\EGF{G}{\VCyc_I};\bfK_{\cala}^G\bigr)  
\xrightarrow{\cong}  
H_n^G\bigl(\underline{E}G;\bfK_{\cala}^G\bigr) \oplus H_n\bigl(\underline{E}G \to \EGF{G}{\VCyc_I};\bfK^G_{\cala}\bigr),
\]
and there exists a natural isomorphism
which is induced by the natural transformation $\bfb \colon \bfNK^G_{\cala} \to \bfK_{\cala}^G$
of spectra over $\OrGF{G}{\VCyc_I}$
\[H_n^G\bigl(\underline{E}G \to \EGF{G}{\VCyc_I};\bfNK^G_{\cala}\bigr) 
\xrightarrow{\cong} 
H_n^G\bigl(\underline{E}G \to \EGF{G}{\VCyc_I};\bfK_{\cala}^G\bigr).
\]
Now Theorem~\ref{the:Splitting_the_K-theoretic_assembly_map_from_Fin_to_VCyc} 
follows from Theorem~\ref{the:Passage_from_Vcyc_I_to_Vcyc_K-theory}.
\end{proof}


\section{Involutions and vanishing of Tate cohomology}
\label{sec:Involutions_and_vanishing_of_Tate_cohomology}

\subsection{Involutions on $K$-theory spectra}
\label{subsec:Involutions_on_K-theory_spectra}

Let $\cala = (\cala, I)$ be \emph{an additive $G$-category with involution},
i.e., an additive $G$-category $\cala$ together with a contravariant functor 
$I \colon \cala \to \cala$ satisfying $I \circ I = \id_{\cala}$ and 
$I \circ R_g = R_g \circ I$ for all $g \in G$. Examples coming from twisted group rings,
or more generally crossed product rings equipped with involutions twisted by orientation homomorphisms 
are discussed in~\cite[Section~8]{Bartels-Lueck(2009coeff)}.

In the sequel we denote for a category $\calc$ its \emph{opposite} category by
$\calc^{\op}$.  It has the same objects as $\calc$. A morphism in $\calc^{\op}$
from $x$ to $y$ is a morphism $y \to x$ in $\calc$. Obviously we can and will
identify $(\calc^{\op})^{\op} = \calc$.

Next we define a covariant functor
\begin{equation}
I(G/H) \colon \intgf{\calg^G(G/H)}{\cala} \to \left(\intgf{\calg^G(G/H)}{\cala}\right)^{\op}.
\label{def_of_I(G/H)}
\end{equation}
It is defined to extend the involution
\[\coprod_{x\in \calg^G(G/H)} I \colon \coprod_{x\in \calg^G(G/H)} \cala \to \biggl(\coprod_{x\in \calg^G(G/H)}\cala \biggr)^{\op}\]
and to send a structural morphism 
$T_g\colon (g_1H, A\cdot g) \to (g_2H, A)$ to $T_{g\inv}\colon (g_2H, I(A))\to (g_1H, I(A)\cdot g)$. 
One easily checks that $I(G/H) \circ I(G/H) = \id$.

Notice that there is a canonical identification $\bfK(\calb^{\op}) =
\bfK(\calb)$ for every additive category $\calb$.  Hence $I(G/H)$ induces a map
of spectra
\[
\bfi(G/H) = \bfK(I(G/H)) \colon \bfK\left(\intgf{\calg^G(G/H)}{\cala}\right) 
\to \bfK\left(\intgf{\calg^G(G/H)}{\cala}\right)
\]
such that $\bfi(G/H) \circ \bfi(G/H) = \id$. 
Let $\IZ/2\text{-}\Spectra$
be the category of spectra with a (strict) $\IZ/2$-operation. Thus the functor
$\bfK^G_R$ becomes a functor
\begin{eqnarray}
\bfK^G_R \colon \Or(G) \to \IZ/2\text{-}\Spectra.
\label{bfk_with_involution}
\end{eqnarray}

Consider an infinite virtually cyclic subgroup $V \subseteq G$ and a fixed
generator $\sigma \in Q_V$. The functor $I(G/V)$ of~\eqref{def_of_I(G/H)} induces
functors
\begin{eqnarray*}
I(G/H)[\sigma] \colon \intgf{\calg^G(G/H)[\sigma]}{\cala} 
& \to & 
\left(\intgf{\calg^G(G/H)[\sigma^{-1}]}{\cala}\right)^{\op};
\\
I(G/H)_K\colon \intgf{\calg^G(G/H)_K}{\cala} 
& \to & 
\left(\intgf{\calg^G(G/H)_K}{\cala}\right)^{\op}.
\end{eqnarray*}
Since $\ev(G/V)[\sigma^{-1}]_* \circ I(G/V)[\sigma]  = I(G/V)_K  \circ \ev(G/V)[\sigma]$
and $j(G/V)[\sigma^{-1}]_* \circ  I(G/V)[\sigma]  = I(G/V) \circ j(G/V)[\sigma]_*$ holds,
we obtain a commutative diagram of spectra
\[
\xymatrix@!C= 13em{
\bfK\left(\intgf{\calg^G(G/V)_K}{\cala}\right)
\ar[d]^{\bfK(I(G/V)_K)}
&
\bfK\left(\intgf{\calg^G(G/V)[\sigma]}{\cala}\right)
\ar[l]_{\bfK(\ev(G/V)[\sigma]_*)} 
\ar[r]^{\bfK(j(G/V)[\sigma]_*)} 
\ar[d]^{\bfK(I(G/V)[\sigma]}
&
\bfK\left(\intgf{\calg^G(G/V)}{\cala}\right)
\ar[d]^{\bfK(I(G/V))}
\\
\bfK\left(\intgf{\calg^G(G/V)_K}{\cala}\right)
&
\bfK\left(\intgf{\calg^G(G/V)[\sigma^{-1}]}{\cala}\right)
\ar[l]_{\bfK(\ev(G/V)[\sigma^{-1}]_*)} 
\ar[r]^{\bfK(j(G/V)[\sigma^{-1}]_*)} 
&
\bfK\left(\intgf{\calg^G(G/V)}{\cala}\right)
}
\]
Since $I(G/H)[\sigma^{-1}]  \circ I(G/H)[\sigma]  = \id$ and $I(G/H)_K \circ I(G/H)_K = \id$
holds, we obtain a $\IZ/2$-operation on  $\bfNK^G_{\cala}$ and hence a functor 
\begin{eqnarray}
\bfNK^G_{\cala} \colon \Or(G) \to \IZ/2\text{-}\Spectra,
\label{NK_as_Z/2_spectrum}
\end{eqnarray}
and we conclude:

\begin{lemma} \label{lem:bfb_compatible_with_involutions}
The transformation $\bfb \colon \bfNK^G_{\cala} \to \bfK^G_{\cala} $ of $\OrGF{G}{\VCyc_I}$-spectra
is compatible with  the $\IZ/2$-actions.
\end{lemma}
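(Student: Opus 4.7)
The plan is to reduce to a pointwise check, showing that for each object $G/H$ in $\OrGF{G}{\VCyc_I}$ the map $\bfb(G/H)$ strictly intertwines the $\IZ/2$-actions on source and target. For $H$ finite, $\bfNK^G_{\cala}(G/H)$ is by construction the trivial spectrum, so there is nothing to check. Hence the substance lies in the case that $V$ is an infinite virtually cyclic subgroup of type~I, and I would read off everything directly from the commutative diagram of spectra displayed immediately before the lemma statement.

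The first step is to identify the $\IZ/2$-action on the source. Since $\bfNK^G_{\cala}(G/V) = \bigvee_{\sigma \in \gen(Q_V)} \bfNK(G/V;\cala,\sigma)$ with each summand defined as the homotopy fiber of $\bfK(\ev(G/V)[\sigma]_*)$, the left square of the displayed diagram exhibits $\bfK(I(G/V)[\sigma])$ as a morphism of homotopy fibers from the $\sigma$-summand to the $\sigma^{-1}$-summand. Wedging over $\gen(Q_V) = \{\sigma, \sigma^{-1}\}$ and using the relations $I(G/V)[\sigma^{-1}] \circ I(G/V)[\sigma] = \id$ and $I(G/V)_K \circ I(G/V)_K = \id$, these maps assemble into the strict involution on $\bfNK^G_{\cala}(G/V)$ featured in~\eqref{NK_as_Z/2_spectrum}. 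The second step is the equivariance of $\bfb(G/V)$ itself; recalling that $\bfj(G/V;\cala,\sigma) = \bfK(j(G/V)[\sigma]_*) \circ \bfl$, the right square of the displayed diagram together with the naturality of the homotopy-fiber construction immediately yields
\[
\bfK(I(G/V)) \circ \bfj(G/V;\cala,\sigma) = \bfj(G/V;\cala,\sigma^{-1}) \circ \bfK(I(G/V)[\sigma]),
\]
where the right-most morphism is understood as the induced map on homotopy fibers. Taking the wedge over $\sigma \in \gen(Q_V)$ then yields the equivariance of $\bfb(G/V)$ on the nose.

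The only potential issue I anticipate is a modeling question: in order to obtain a strict $\IZ/2$-action on the wedge of homotopy fibers, one has to choose the homotopy fiber of a map of spectra as a strictly functorial construction in strictly commutative squares. With any standard point-set model this is automatic, so the lemma follows formally from the displayed diagram and the strict involutivity of the categorical involutions $I(G/V)$, $I(G/V)[\sigma]$, and $I(G/V)_K$.
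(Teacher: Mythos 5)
Your argument coincides with the paper's: the lemma is read off directly from the commutative diagram of spectra preceding it, together with the relations $I(G/V)[\sigma^{-1}]\circ I(G/V)[\sigma] = \id$ and $I(G/V)_K \circ I(G/V)_K = \id$, which is exactly how the paper ``concludes'' the lemma. Your explicit unwinding of the homotopy-fiber functoriality and the finite case are reasonable bookkeeping that the paper leaves implicit.
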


\subsection{Orientable virtually cyclic subgroups of type~I}
\label{subsec:Orientable_virtually_cyclic_subgroups_of_type_I}

\begin{definition}[Orientable virtually cyclic subgroups of type~I]
  \label{def:orientable_cyclic_subgroups}
  Given a group $G$, we say that~\emph{the infinite virtually cyclic subgroups of type~I 
of
    $G$ are orientable} if there is for every virtually cyclic subgroup $V$ of type~I a
  choice $\sigma_V$ of a generator of  the infinite cyclic group $Q_V$ with the following property: 
  Whenever $V$ and $V'$ are infinite virtually cyclic subgroups of type~I, and  $f\colon V\to V'$ 
  is an inclusion or a conjugation by some element of
  $G$, then the map $Q_f \colon Q_V \to Q_W$ sends $\sigma_V$ to a positive multiple of
  $\sigma_W$. Such a choice of elements $\{\sigma_V \mid V \in \VCyc_I\}$ 
  is called an \emph{orientation}.
\end{definition}

\begin{lemma} \label{lem:Non-orientability} Suppose that the virtually cyclic
  subgroups of type~I of $G$ are orientable. Then all infinite virtually cyclic
  subgroups of $G$ are of type~I, and the fundamental group $\IZ \rtimes \IZ$ of
  the Klein bottle is not a subgroup of $G$.
\end{lemma}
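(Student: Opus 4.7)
My plan is to observe that orientability as defined precludes having any element $g \in G$ that conjugates an infinite cyclic subgroup $C = \langle a \rangle \subseteq G$ to itself via inversion, i.e.\ satisfying $gag^{-1} = a^{-1}$. Once I establish this key obstruction, both assertions of the lemma follow, because a type~II infinite virtually cyclic subgroup of $G$ and a Klein-bottle subgroup of $G$ both contain such a configuration.

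First I will record the key obstruction: Suppose we are given an orientation $\{\sigma_V\}_{V\in\VCyc_I}$, and suppose there exists an infinite cyclic subgroup $V \subseteq G$ and an element $g \in G$ with $gVg^{-1} = V$ such that conjugation by $g$ induces inversion on $V$ (and hence on $Q_V$, since $K_V$ is trivial for cyclic $V$). Taking $f = c(g) \colon V \to V$ in the sense of Definition~\ref{def:orientable_cyclic_subgroups}, we would have $Q_f(\sigma_V) = -\sigma_V$, contradicting the requirement that $Q_f$ sends $\sigma_V$ to a positive multiple of $\sigma_V$.

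Next I will rule out infinite virtually cyclic subgroups of type~II. Suppose $V \subseteq G$ is of type~II. By Lemma~\ref{lem:types_of_virtually_cyclic_groups}~\ref{lem:types_of_virtually_cyclic_groups:exact_sequence} we have an epimorphism $p_V \colon V \twoheadrightarrow Q_V = D_\infty$ with finite kernel $K_V$. Let $V' := p_V^{-1}(\IZ)$ where $\IZ \subseteq D_\infty$ is the index-two subgroup; then $V'$ is an infinite virtually cyclic subgroup of type~I, and $K_{V'} = K_V$ (both are the maximal finite normal subgroup of $V'$, using that $K_{V'}$ is characteristic in $V'$ and $V'$ is normal in $V$). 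Choose $t \in V \setminus V'$; conjugation $c(t)$ preserves $V'$ and $K_V$ and induces on $Q_{V'} = \IZ$ the action of the nontrivial element of $D_\infty/\IZ$, which is inversion. Applying the orientability condition to $c(t) \colon V' \to V'$ gives the desired contradiction.

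Finally I will show $G$ cannot contain the Klein bottle group $K = \langle a, b \mid bab^{-1} = a^{-1} \rangle$. If it did, then $V := \langle a \rangle \cong \IZ$ is an infinite virtually cyclic subgroup of type~I, and the element $b \in G$ conjugates $V$ to itself inducing inversion on $Q_V \cong \IZ$. This is precisely the obstruction identified above, so again we reach a contradiction. I don't anticipate any serious obstacle here; the whole argument is essentially a direct unpacking of Definition~\ref{def:orientable_cyclic_subgroups} combined with the structural Lemma~\ref{lem:types_of_virtually_cyclic_groups}.
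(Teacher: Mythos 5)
Your proof is correct and follows essentially the same strategy as the paper: inside a type~II subgroup $V$, exhibit a type~I subgroup on which conjugation by a suitable element of $V$ inverts the quotient $Q$, contradicting orientability; the paper takes $p_V^{-1}([Q_V,Q_V])$ (index $4$) where you take $p_V^{-1}(\IZ)$ (the cyclic index-$2$ subgroup of $D_\infty$), a cosmetic difference. Your treatment of the Klein bottle is the same observation the paper dismisses as obvious.
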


\begin{proof}
  Suppose that $G$ contains an infinite virtually cyclic subgroup $V$ of type~{II}.
  Then $Q_V$ is the infinite dihedral group.  Its commutator $[Q_V,Q_V]$ is
  infinite cyclic.  Let $W$ be the preimage of the commutator $[Q_V,Q_V]$ under
  the canonical projection $p_V \colon V \to Q_V$.  There exists an element 
  $y  \in Q_V$ such that conjugation with $y$ induces $-\id$ on $[Q_V,Q_V]$.  Obviously $W$
  is an infinite virtually cyclic group of type~I,  and the restriction of $p_V$ to $W$ is
  the canonical map $p_W \colon W \to Q_W = [Q_V,Q_V]$. Choose an element
  $x \in V$ with $p_V(x) = y$. Conjugation
  with $x$ induces an automorphism of $W$ which induces $-\id$ on $Q_W$. Hence
  the virtually cyclic subgroups of type~I of $G$ are not orientable.

The statement about the Klein bottle is obvious.
\end{proof}

For the notions of a CAT(0)-group and of a hyperbolic group we refer for
instance to~\cite{Bridson-Haefliger(1999),Ghys-Harpe(1990),Gromov(1987)}.  The
fundamental group of a closed Riemannian manifold is hyperbolic if the sectional
curvature is strictly negative, and is a CAT(0)-group if the sectional curvature
is non-positive.

\begin{lemma} \label{lem:hyperbolic_groups_orientability} Let $G$ be a
  hyperbolic group.  Then the infinite virtually cyclic subgroups of type~I of
  $G$ are orientable, if and only if all infinite virtually cyclic subgroups of
  $G$ are of type~I.
\end{lemma}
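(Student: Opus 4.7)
The ``only if'' direction is immediate from Lemma~\ref{lem:Non-orientability}. For the converse, assume that $G$ is hyperbolic and that every infinite virtually cyclic subgroup of $G$ is of type~I; in particular every maximal infinite virtually cyclic subgroup $M$ is of type~I, so $Q_M$ is infinite cyclic and the notion of a generator $\sigma_M$ of $Q_M$ makes sense.

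The plan relies on two well-known structural facts about hyperbolic groups: every infinite virtually cyclic subgroup $V\subseteq G$ is contained in a unique maximal infinite virtually cyclic subgroup $M(V)\subseteq G$, and $M(V)$ can be described concretely as the setwise stabilizer in $G$ of the pair of endpoints $\{v^+,v^-\}\subset \partial G$ of any infinite-order element $v\in V$ on the Gromov boundary. Moreover, conjugation by $g\in G$ carries $M(V)$ onto $M(gVg^{-1}) = g M(V) g^{-1}$. The key technical input is the claim that $N_G(M)=M$ for every maximal infinite virtually cyclic subgroup $M$: writing $M = \mathrm{Stab}_G\{v^+,v^-\}$ for any infinite-order $v\in M$, any $g\in N_G(M)$ satisfies $gvg^{-1}\in M$, so $\{gv^+,gv^-\} = \{(gvg^{-1})^+,(gvg^{-1})^-\}\subseteq\{v^+,v^-\}$; hence $g$ preserves $\{v^+,v^-\}$ and lies in $M$. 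This step is the main obstacle and is where hyperbolicity enters in an essential way.

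Granting $N_G(M)=M$, I construct the orientation in two steps. First, on the set $\calm$ of maximal infinite virtually cyclic subgroups of $G$, pick one representative $M$ of each $G$-conjugacy class together with an arbitrary generator $\sigma_M$ of $Q_M$; then for $g\in G$ set $\sigma_{gMg^{-1}} := Q_{c_g}(\sigma_M)$. This is well defined because if $gMg^{-1} = g' M g'^{-1}$, then $(g')^{-1}g\in N_G(M)=M$, so $c_g$ and $c_{g'}$ differ by conjugation with an element of $M$, and such an inner automorphism acts trivially on the abelian quotient $Q_M$. Second, for arbitrary $V\in \VCyc_I$, let $\sigma_V$ be the unique generator of $Q_V$ whose image in $Q_{M(V)}$ under the injection of Lemma~\ref{lem:types_of_virtually_cyclic_groups}~\ref{lem:types_of_virtually_cyclic_groups:characteristic_exact_sequence} is a positive multiple of $\sigma_{M(V)}$. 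Compatibility with an inclusion $V\subseteq V'$ is then automatic from $M(V)=M(V')$, and compatibility with a conjugation $V \to gVg^{-1}$ follows from the conjugation-equivariant choice of orientations on the maximals. Thus everything beyond the identification $N_G(M)=M$ is a straightforward transport of structure.
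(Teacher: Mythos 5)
Your proof is correct and follows essentially the same route as the paper: both rely on the fact that a hyperbolic group satisfies the $(N\!M_{\Fin\subseteq\VCyc_I})$-condition (unique maximal infinite virtually cyclic overgroup, self-normalizing), fix orientations on a set of conjugacy-class representatives of maximal virtually cyclic subgroups, extend them conjugation-equivariantly using $N_GM=M$, and then pull back along the injections $Q_V\hookrightarrow Q_{M(V)}$ to orient all $V$. The only difference is that you sketch a boundary-dynamics argument for $N_GM=M$ and the unique-maximal property, whereas the paper simply cites \cite[Example~3.6]{Lueck-Weiermann(2012)} for the $(N\!M_{\Fin\subseteq\VCyc_I})$-condition.
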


\begin{proof} The ``only if''-statement follows from Lemma~\ref{lem:Non-orientability}. 
To prove the ``if''-statement, assume that all infinite virtually
  cyclic subgroups of $G$ are of type~I.
  
  By~\cite[Example~3.6]{Lueck-Weiermann(2012)}, every  hyperbolic group
  satisfies the condition $(N\!M_{\Fin \subseteq \VCyc_I})$, i.e., every infinite
  virtually cyclic subgroup $V$ is contained in unique maximal one $V_{\max}$
  and the normalizer of $V_{\max}$ satisfies $N\!V_{\max} = V_{\max}$. Let
  $\calm$ be a complete system of representatives of the conjugacy classes of
  maximal infinite virtually cyclic subgroups.  Since by assumption $V \in \calm$ is of type~I,
 we can fix a generator $\sigma_V \in Q_V$ for each $V \in \calm$. 

  Consider any  infinite virtually cyclic
  subgroup $W$  of $G$ type $I$. Choose $g \in G$ and $V \in \calm$ 
  such that $gWg^{-1} \subseteq V$. 
  Then conjugation with $g$ induces an injection $Q_{c(g)} \colon Q_W \to Q_V$ by
  Lemma~\ref{lem:types_of_virtually_cyclic_groups}~%
\ref{lem:types_of_virtually_cyclic_groups:characteristic_exact_sequence}.
  We equip $W$ with the generator $\sigma_W \in Q_W$ 
  for which there exists an integer $n \ge 1$ with
  $Q_{c(g)}(\sigma_W) = (\sigma_V)^n$. 
  This is independent of the choice of $g$ and $V$: For every $g \in G$ and $V \in \calm$ with 
  $|gVg^{-1} \cap V| = \infty$, the condition $(N\!M_{\Fin \subseteq \VCyc_I})$
 implies that  $g$ belongs to $V$ and conjugation with an element $g \in V$ induces
  the identity on $Q_V$.
  \end{proof}

  \begin{lemma} \label{lem:CAT(0)-groups_orientability} Let $G$ be a
    CAT(0)-group.  Then the infinite virtually cyclic subgroups of type~I of $G$
    are orientable if and only if all infinite virtually cyclic subgroups of
    $G$ are of type~I and the fundamental group $\IZ \rtimes \IZ$ of the
    Klein bottle is not a subgroup of $G$.
  \end{lemma}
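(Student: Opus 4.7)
The ``only if''~direction is an immediate consequence of Lemma~\ref{lem:Non-orientability}: an orientation forces all infinite virtually cyclic subgroups to be of type~I, and the Klein bottle group $\IZ\rtimes\IZ$ cannot embed in $G$ because its subgroup $\langle a\rangle\cong\IZ$ is of type~I while conjugation by $b$ acts as $-1$ on $Q_{\langle a\rangle}$, violating the orientation condition for the inclusion $\langle a\rangle\to\langle a\rangle$ induced by $c(b)$.

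For the ``if''~direction, the plan is to mimic the hyperbolic case of Lemma~\ref{lem:hyperbolic_groups_orientability} verbatim; the only non-trivial input there was the condition $(N\!M_{\Fin \subseteq \VCyc_I})$, namely that each infinite virtually cyclic $V\subseteq G$ is contained in a unique maximal one $V_{\max}$ and that $N_G V_{\max}=V_{\max}$. So assume the hypotheses of the ``if''~direction and establish $(N\!M_{\Fin \subseteq \VCyc_I})$ for $G$. With this in hand, one chooses a generator $\sigma_V\in Q_V$ for each representative $V$ of a conjugacy class of maximal infinite virtually cyclic subgroups (all of type~I by hypothesis), transports these choices along conjugations $c(g)$, and checks independence exactly as in the hyperbolic proof, using that $g\in V_{\max}$ acts trivially on $Q_{V_{\max}}$.

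The heart of the matter is therefore the purely group-theoretic claim: \emph{if all infinite virtually cyclic subgroups of $G$ are of type~I and $\IZ\rtimes\IZ\not\hookrightarrow G$, then no element of $G$ can invert any $Q_V$ from outside $V$, and condition $(N\!M_{\Fin \subseteq \VCyc_I})$ holds.} To prove the ``no inversion'' statement, suppose $g\in G$ normalizes some infinite virtually cyclic subgroup $V$ of type~I and acts as $-1$ on $Q_V$. Then $g$ normalizes $K_V$ (since $gK_Vg^{-1}$ is again a finite normal subgroup of $V$, hence contained in $K_V$), so $K_V$ is normal in $\langle V,g\rangle$. Pick a lift $t\in V$ of a generator $\sigma_V\in Q_V$; then in the quotient $\overline{\langle V,g\rangle}:=\langle V,g\rangle/K_V$ one has $\bar g\,\bar t\,\bar g^{-1}=\bar t^{-1}$ with $\bar t$ of infinite order. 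A direct case analysis on the image of $\bar g$ shows that $\overline{\langle V,g\rangle}$ is either $D_\infty$, a group of the form $\IZ\rtimes\IZ/2k$ (both infinite virtually cyclic of type~II), or isomorphic to the Klein bottle group $\IZ\rtimes\IZ$. In the first two cases $\langle V,g\rangle$ is itself infinite virtually cyclic of type~II (type~II is preserved under finite-by- extensions), contradicting the type-I hypothesis; the third case contradicts the Klein bottle hypothesis.

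Having excluded inversions, $(N\!M_{\Fin \subseteq \VCyc_I})$ can now be derived for CAT(0)-groups along the lines of~\cite[Example~3.6]{Lueck-Weiermann(2012)}: for CAT(0)-groups the obstruction to the unique-maximal and self-normalizing property for $\VCyc_I$ consists precisely of dihedral (type~II) behavior and of Klein-bottle-type flats, both of which our hypotheses forbid. The expected main obstacle is this last step, i.e.~the clean CAT(0)-geometric derivation of $(N\!M_{\Fin \subseteq \VCyc_I})$ from the absence of inversions; one either invokes the relevant statement from~\cite{Lueck-Weiermann(2012)} or argues via axes of semisimple isometries in a CAT(0) model for $G$, using that the stabilizer of an axis (setwise) fails to preserve its orientation exactly when an inversion is present. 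Once $(N\!M_{\Fin \subseteq \VCyc_I})$ is secured, the orientation is constructed as in the proof of Lemma~\ref{lem:hyperbolic_groups_orientability}.
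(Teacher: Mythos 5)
There is a genuine gap in your plan: you propose to establish $(N\!M_{\Fin\subseteq\VCyc_I})$ for CAT(0)-groups under the given hypotheses and then run the hyperbolic-group argument verbatim, but $(N\!M_{\Fin\subseteq\VCyc_I})$ is simply \emph{false} for CAT(0)-groups even under those hypotheses. The group $G = \IZ^2$ is CAT(0), every infinite virtually cyclic subgroup of $\IZ^2$ is infinite cyclic (hence of type~I), the Klein bottle group does not embed, and yet for any maximal infinite cyclic $V \subseteq \IZ^2$ one has $N_G V = \IZ^2 \ne V$, so the normalizer condition fails. The obstruction to the self-normalizing part of $(N\!M_{\Fin\subseteq\VCyc_I})$ is not only dihedral or Klein-bottle behaviour; large abelian (or, more generally, large commensurating) pieces already destroy it. Your last paragraph, which hopes to extract $(N\!M_{\Fin\subseteq\VCyc_I})$ from the absence of inversions, is therefore aiming at a statement that is not true, and the argument cannot be completed along these lines. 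The paper's actual proof avoids this: it does not construct $\calm$ from maximal representatives with self-normalizing condition, but instead introduces the commensurability relation $V_1 \sim V_2 \iff |gV_1g^{-1}\cap V_2| = \infty$ on infinite virtually cyclic subgroups of type~I, picks a transversal $\cals$, and transports orientations across $\sim$. The well-definedness then reduces (via~\cite[Lemma~4.2]{Lueck(2009catevcyc)} and the condition~(C) argument there, a CAT(0)-specific geometric input you would need to cite or reprove) to the case of an infinite cyclic subgroup $C \subseteq gVg^{-1}\cap V$ with $g \in N_GC$, and only then does it apply a dichotomy: either $\langle g, C\rangle / C$ is finite, whence $\langle g, C\rangle$ is infinite virtually cyclic and hence of type~I and the action on $C$ is trivial, or $\langle g, C\rangle / C$ is infinite, in which case (since $C$ is genuinely infinite cyclic and the quotient $\IZ$ is free) the extension splits and a nontrivial action would produce an honest $\IZ\rtimes\IZ$ subgroup, contradicting the hypothesis.

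There is a secondary issue in your ``no inversion'' argument even as stated: in the Klein-bottle case you obtain $\langle V,g\rangle/K_V \cong \IZ\rtimes\IZ$, but that is a quotient of a subgroup of $G$, not a subgroup, so you cannot immediately contradict the hypothesis that $\IZ\rtimes\IZ$ does not embed in $G$. The paper sidesteps this by working with an infinite cyclic $C$ (not a virtually cyclic $V$ modulo $K_V$) and an honest free quotient $\langle g, C\rangle/C \cong \IZ$, so that the split extension really produces a subgroup isomorphic to $\IZ\rtimes\IZ$.
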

  \begin{proof} Because of Lemma~\ref{lem:Non-orientability} it suffices to
    construct for a CAT(0)-group an orientation for its infinite virtually
    cyclic subgroups  of type~I, provided that all infinite virtually
    cyclic subgroups of $G$ are of type~I and the fundamental group $\IZ \rtimes
    \IZ$ of the Klein bottle is not a subgroup of $G$. 

    Consider on the set of
    infinite virtually cyclic subgroups of type I of $G$ the relation $\sim$,
    where we put $V_1 \sim V_2$ if  and only if there exists an element
    $g \in G$ with $|gV_1g^{-1} \cap V_2| =
    \infty$. This is an equivalence relation since for any infinite virtually
    cyclic group $V$ and elements $v_1, v_2 \in V$ of infinite order we can find
    integers $n_1, n_2$ with $v_1^{n_1} = v_2^{n_2}$, $n_1 \not= 0$ and $n_2
    \not= 0$. Choose a complete system of representatives $\cals$ for the
    classes under $\sim$. For each element $V \in \cals$ we choose an
    orientation $\sigma_V \in Q_V$.  

    Given any infinite virtually cyclic subgroup $W \subseteq G$ of type I, we define a
    preferred generator $\sigma_W \in Q_W$ as follows. Choose $g \in G$ and 
    $V  \in \cals$ with $|gWg^{-1} \cap V| = \infty$.  Let $i_1 \colon gWg^{-1} \cap
    V \to W$ be the injection sending $v$ to $g^{-1}vg$ and $i_2 \colon gWg^{-1}
    \cap V \to V$ be the inclusion.  By
    Lemma~\ref{lem:types_of_virtually_cyclic_groups}~\ref{lem:types_of_virtually_cyclic_groups:characteristic_exact_sequence} 
    we obtain injections of infinite cyclic groups 
      $Q_{i_1} \colon Q_{gWg^{-1} \cap  V} \to Q_W$ and $Q_{i_2} \colon Q_{gWg^{-1} \cap V} \to Q_V$.  
     Equip $Q_W$ with the generator $\sigma_W$ for which there exists integers $n_1, n_2 \ge 1$
      and $\sigma \in Q_{gWg^{-1} \cap V}$ with $Q_{i_1}(\sigma) =
      (\sigma_W)^{n_1}$ and $Q_{i_2}(\sigma) = (\sigma_V)^{n_2}$.
    
      We have to show that this is well-defined. Obviously it is independent of
      the choice of $\sigma$, $n_1$ and $n_2$.  It remains to show that the
      choice of $g$ does not matter. For this purpose we have to consider the
      special case $W = V$ and have to show that the new generator $\sigma_W$
      agrees with the given one $\sigma_V$. We conclude
      from~\cite[Lemma~4.2]{Lueck(2009catevcyc)} and the argument about the
      validity of condition (C) appearing in the proof
      in~\cite[Theorem~1.1~(ii)]{Lueck(2009catevcyc)} that there exists an
      infinite cyclic subgroup $C \subseteq gVg^{-1} \cap V$ such that $g$
      belongs to the normalizer $N_GC$. It suffices to show that conjugation with
      $g$ induces the identity on $C$. Let $H \subseteq G$ be the subgroup
      generated by $g$ and $C$. We obtain a short exact sequence 
      $1 \to C \to H  \xrightarrow{\pr} H/C \to 1$, 
      where $H/C$ is a cyclic subgroup generated by
      $\pr(g)$. Suppose that $H/C$ is finite. Then $H$ is an infinite virtually
      cyclic subgroup of $G$ which must be by assumption of type~I. Since the
      center of $H$ must be infinite by
     Lemma~\ref{lem:types_of_virtually_cyclic_groups}~\ref{lem:types_of_virtually_cyclic_groups:H_1_center_type_I} 
     and hence the intersection of the center of
      $H$ with $C$ is infinite cyclic, the conjugation action of $g$ on $C$ must be
      trivial. Suppose that $H/C$ is infinite. Then $H$ is the fundamental group
      of the Klein bottle if the conjugation action of $g$ on $C$ is
      non-trivial. Since the fundamental group of the Klein bottle
      is not a subgroup of $G$ by assumption, the conjugation action of $g$ on $C$ is trivial
      also in this case.
\end{proof}

\subsection{Proof of Theorem~\ref{the:induced_Nil_term_on_vcyc_I}}
\label{subsec:Proof_of_Theorem_ref(the:induced_Nil_term_on_vcyc_I)}

Let $\Or_{\VCyc_I \setminus \Fin}(G)$ be the full subcategory of the orbit
category $\Or(G)$ consisting of those objects $G/V$ for which $V$ is an infinite  virtually
cyclic subgroup of type $I$. We obtain a functor
\[
\gen(Q_?) \colon \Or_{\VCyc_I \setminus \Fin}(G) \to \IZ/2 \text{-} \Sets
\]
sending $G/V$ to $\gen(Q_V)$, and a $G$-map $f\colon G/V\to G/W$ to $\gen(f)$ as defined in \eqref{eq:definition_gen(f)}. The $\IZ/2$-action on $\gen(Q_V)$ is given by
taking the inverse of a generator.  The condition that the virtually cyclic
subgroups of type~I of $G$ are orientable (see Definition~\ref{def:orientable_cyclic_subgroups})
is equivalent to the condition that the
functor $\gen(Q_?)$ is isomorphic to the constant functor sending $G/V$ to
$\IZ/2$. A choice of an orientation corresponds to a choice of such an isomorphism.

\begin{proof}[Proof of Theorem~\ref{the:induced_Nil_term_on_vcyc_I}]
  Because of  Theorem~\ref{the:Splitting_the_K-theoretic_assembly_map_from_Fin_to_VCyc} 
  and Lemma~\ref{lem:bfb_compatible_with_involutions} it
  suffices to show that the $\IZ[\IZ/2]$-module $H_n^G\bigl(\underline{E}G \to
  \EGF{G}{\VCyc_I};\bfNK^G_{\cala}\bigr)$ is isomorphic to 
  $\IZ[\IZ/2] \otimes_{\IZ} A$ for some $\IZ$-module $A$.

Fix an orientation $\{\sigma_V \mid V \in \VCyc_I\}$ in the sense of
Definition~\ref{def:orientable_cyclic_subgroups}.  We have the
$\OrGF{G}{\VCyc_I}$-spectrum
\[
\bfNK^{G'}_R \colon \OrGF{G}{\VCyc_I} \to \Spectra,
\]
which sends $G/V$ to the trivial spectrum if $V$ is finite and to
$\bfNK\bigl(G/V;\cala,\sigma_V\bigr)$ if $V$ is infinite virtually cyclic of type~I.  This is
well-defined by the orientability assumption. Now there is 
an obvious  natural isomorphism of functors
from $\OrGF{G}{\VCyc_I}$ to the category of $\IZ/2$-spectra 
\[
\bfNK^{G'}_{\cala} \wedge (\IZ/2)_+ \xrightarrow{\cong} \bfNK^G_{\cala}
\]
which is a weak equivalence of $\OrGF{G}{\VCyc_I}$-spectra. It induces an
$\IZ[\IZ/2]$-isomorphism
\[H_n^G\bigl(\underline{E}G \to \EGF{G}{\VCyc_I};\bfNK^{G'}_{\cala}\bigr) \otimes_{\IZ} \IZ[\IZ/2]
\xrightarrow{\cong}
H_n^G\bigl(\underline{E}G \to \EGF{G}{\VCyc_I};\bfNK^G_{\cala}\bigr).
\]
This finishes the proof of Theorem~\ref{the:induced_Nil_term_on_vcyc_I}.
\end{proof}


\section{Rational vanishing of the relative term}
\label{sec:Rational_Vanishing_of_the_relative_term}

This section is devoted to the proof of Theorem~\ref{the:relativ_assembly_Fin_to_VCyc_rational_bijective_K-theory}.

Consider the following diagram of groups, where the vertical maps are
inclusions of subgroups of finite index and the horizontal arrows are
automorphisms
\[
\xymatrix{H \ar[r]^{\phi} \ar[d]^i
&
H \ar[d]^i
\\
K \ar[r]^{\psi} 
&
K
}
\]
We obtain a commutative diagram
\begin{eqnarray}
\xymatrix{
K_n(RH_{\phi}[t]) \ar[r]^{i[t]_*} \ar[d]^{(\ev_H)_*}
&
K_n(RK_{\psi}[t]) \ar[r]^{i[t]^*} \ar[d]^{(\ev_K)_*}
&
K_n(RH_{\phi}[t]) \ar[d]^{(\ev_H)_*}
\\
K_n(RH) \ar[r]^{i_*} 
&
K_n(RK) \ar[r]^{i^*} 
&
K_n(RH)
}
\label{diagram_for_i_ast_and_iast}
\end{eqnarray}
as follows: $i_*$ and $i^*$ are the maps induced by induction and restriction with the ring
homomorphism $Ri \colon RH \to RK$; $i[t]_*$ and $i[t]^*$ are
the maps induced by induction and restriction with the ring
homomorphism $Ri[t] \colon RH_{\phi}[t] \to RK_{\psi}[t]$; $\ev_H \colon
RH_{\phi}[t] \to RH$ and $\ev_K \colon RK_{\psi}[t] \to RK$ are the ring
homomorphisms given by putting $t = 0$.

The left square is obviously well-defined and commutative. The right square is
well-defined since the restriction of $RK$ to $RH$ by $Ri$ is a finitely
generated free $RH$-module and the restriction of $RK_{\psi}[t]$ to
$RH_{\phi}[t]$ by $Ri[t]$ is a finitely generated free $RH_{\phi}$-module by the
following argument.

Put $l := [K:H]$.  Choose a subset $\{k_1, k_2, \ldots k_l\}$ of $K$ such that $K/H$
can be written  as $\{k_1H, k_2H, \ldots, k_lH\}$. The  map
\[
\alpha \colon \bigoplus_{i =1}^l RH \xrightarrow{\cong} i^*RK,
\quad   (x_1, x_2, \ldots, x_l) \mapsto \sum_{i=1}^l x_i \cdot k_i
\]
is an homomorphism of $RH$-modules and the map
\[
\beta  \colon \bigoplus_{i =1}^l RH_{\phi}[t] \xrightarrow{\cong} i[t]^* RK_{\psi}[t],
\quad   (y_1, y_2, \ldots, y_l) \mapsto \sum_{i=1}^l y_i \cdot k_i
\]
is a homomorphism of $RH_{\phi}[t]$-modules.  Obviously $\alpha$ is bijective. 
The map $\beta$ is bijective since for any integer $m$ we get
$K/H = \{\psi^m(k_1)H, \psi^m(k_2)H, \ldots \psi^m(k_i)H\}$.

To show that the right square  commutes we have to define for every finitely generated projective
$RK_{\psi}[t]$-module $P$ a natural $RH$-isomorphism
\[
T(P) \colon (\ev_H)_* i[t]^* P \xrightarrow{\cong} i^* (\ev_K)_* P.
\]
First we define $T(P)$. By the adjunction of induction and restriction it
suffices to construct a natural map $T'(P) \colon i_* (\ev_H)_* i[t]^* P \to (\ev_K)_* P$. 
Since $i \circ \ev_H = \ev_K \circ i[t]$ we have to construct a
natural map $T''(P) \colon i[t]_*i[t]^* P \to P$, since then we define $T'(P)$ to
be $(\ev_K)_*(T''(P))$.  Now define $T''(P)$ to be the adjoint of the identity
$\id \colon i[t]^* P \to i[t]^* P$. Explicitly $T(P)$ sends an element $h \otimes x$ in
$(\ev_H)_* i[t]^* P = RH \otimes_{\ev_H} i[t]^* P$ to the element $i(h) \otimes x$
in $i^* (\ev_K)_* P = RK \otimes_{\ev_K} P$.

Obviously $T(P)$ is natural in $P$ and compatible with direct sums. Hence in
order to show that $T(P)$ is bijective for all finitely generated projective
$RK_{\psi}[t]$-modules $P$, it suffices to do that for $P = RK_{\psi}[t]$. Now
the claim follows since the following diagram of $RH$-modules commutes
\[
\xymatrix@!C=12em{RH \otimes_{\ev_H} i[t]^* RK_{\psi}[t] \ar[r]^-{T(RK_{\psi}[t])} 
& i^* \bigl(RK \otimes_{\ev_K} RK_{\psi}[t]\bigr) \ar[d]^{\cong}
\\
RH \otimes_{\ev_H} \left( \bigoplus_{i =1}^l RH_{\phi}[t]  \right) \ar[u]^{\id \otimes_{\ev_H}\beta}_{\cong}
& i^* RK
\\
\bigoplus_{i =1}^l  RH \otimes_{\ev_H} RH_{\phi}[t] \ar[u]^-{\cong} \ar[r]^{\cong}
&
\bigoplus_{i =1}^l  RH \ar[u]^{\alpha}
}
\]
where the isomorphisms $\alpha$ and $\beta$ have been defined above and all
other arrows marked with $\cong$ are the obvious isomorphisms.
Recall that $\NK_n(RH,R\phi)$ is by definition  the kernel of
$(\ev_H)_* \colon  K_n(RH_{\phi}[t]) \to K_n(RH)$ and the analogous statement holds for $\NK_n(RK,R\psi)$. 

The diagram~\eqref{diagram_for_i_ast_and_iast} induces homomorphisms
\begin{eqnarray*}
i_* \colon \NK_n(RH,R\phi) & \to & \NK_n(RK,R\psi)
\\
i^* \colon \NK_n(RK,R\psi) & \to & \NK_n(RH,R\phi)
\end{eqnarray*}
Since both composites
\begin{eqnarray*}
K_n(RH_{\phi}[t]) & \xrightarrow{i[t]^* \circ i[t]_*} & K_n(RH_{\phi}[t]);
\\
K_n(RH) & \xrightarrow{i^* \circ i_*} & K_n(RH),
\end{eqnarray*}
are multiplication with $l$, we conclude

\begin{lemma} \label{lem:induction_and_restriction_on_widetilde(NK)} 
The composite 
$i^* \circ i_* \colon \NK_n(RH,R\phi) \to   \NK_n(RH,R\phi)$ 
is multiplication with $l$ for all $n \in \IZ$.
\end{lemma}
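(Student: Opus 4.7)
The plan is to deduce the lemma directly from the commutative diagram~\eqref{diagram_for_i_ast_and_iast} together with the statement, already established in the paragraph preceding the lemma, that the composite $i[t]^\ast \circ i[t]_\ast \colon K_n(RH_\phi[t]) \to K_n(RH_\phi[t])$ is multiplication by $l = [K:H]$.

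First I would check that $i_\ast$ and $i^\ast$ really do restrict to maps between the $\NK$-groups. This is immediate from the commutativity of the left and right squares of~\eqref{diagram_for_i_ast_and_iast}: if $x \in K_n(RH_\phi[t])$ satisfies $(\ev_H)_\ast(x) = 0$, then $(\ev_K)_\ast(i[t]_\ast x) = i_\ast (\ev_H)_\ast(x) = 0$, so $i[t]_\ast$ maps $\NK_n(RH, R\phi)$ into $\NK_n(RK, R\psi)$; the argument for $i[t]^\ast$ is symmetric.

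Next, since $\NK_n(RH, R\phi) \subseteq K_n(RH_\phi[t])$ by definition, the composite $i^\ast \circ i_\ast$ on $\NK_n(RH, R\phi)$ is the restriction of $i[t]^\ast \circ i[t]_\ast$ on $K_n(RH_\phi[t])$. By the standard formula for restriction composed with induction along a finite-index inclusion (which applies here because $i[t]^\ast RK_\psi[t]$ is a free $RH_\phi[t]$-module of rank $l$ via the isomorphism $\beta$ above), this composite is multiplication by $l$, and hence so is its restriction.

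The only subtlety worth watching is that the finite-index restriction-induction formula is applied to the twisted polynomial rings $RH_\phi[t] \subseteq RK_\psi[t]$ rather than to $RH \subseteq RK$; but the isomorphism $\beta$ constructed earlier shows that the free rank is again $l$, so the same argument as in the classical (untwisted) case applies. This is the one place where the twisted setting could have caused an obstruction, but the author has already dispatched it by producing the explicit basis $\{k_1, \ldots, k_l\}$ that simultaneously trivialises both $i^\ast RK$ and $i[t]^\ast RK_\psi[t]$.
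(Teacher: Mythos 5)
Your argument has the same shape as the paper's proof: commutativity of diagram~\eqref{diagram_for_i_ast_and_iast} shows that $i_*$ and $i^*$ restrict to the $\NK$-kernels, and the lemma then follows from the assertion that $i[t]^* \circ i[t]_*$ is multiplication by $l$ on $K_n(RH_\phi[t])$. Where you go further than the paper is in trying to \emph{justify} that assertion from the fact that $i[t]^*RK_\psi[t]$ is free of rank $l$ over $RH_\phi[t]$, and that justification does not hold up. The composite $i^*\circ i_*$ is tensoring with $RK_\psi[t]$ regarded as an $RH_\phi[t]$-\emph{bimodule}, and the left $RH_\phi[t]$-action on the free right module $\bigoplus_j k_j\,RH_\phi[t]$ permutes and conjugates the $l$ summands instead of leaving them fixed. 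Already for untwisted group rings this fails: for the index-two inclusion $\IZ/3 \subset S_3$, $\res \circ \ind$ on $K_0(\IC[\IZ/3]) \cong \IZ^3$ sends a nontrivial character $\chi$ to $\chi + \overline{\chi}$, not to $2\chi$. So the ``only subtlety'' you flag (the twist) is not the real one, and producing the explicit basis $\{k_1,\dots,k_l\}$ does not dispatch it.

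To be fair, the paper also states the ``multiplication by $l$'' identity without proof. The reason it is correct in the setting where the lemma is actually applied (Theorem~\ref{the:vanishing_of_widetilde(NK)(RK,Rphi)_for_finite_K_rationally}) is that there $H$ is normal in $K$ with $K/H$ cyclic generated by the image of the automorphism, so the double-coset decomposition gives $i^* \circ i_* = \sum_{j=0}^{l-1}(\phi^j)^*$, and each $(\phi^j)^*$ acts trivially on $\NK_n(RH,R\phi)$ (for instance because $\phi$ extends to an inner automorphism of the Laurent ring $RH_\phi[t,t^{-1}]$, which contains $\NK_n(RH,R\phi)$ as a direct summand). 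Supplying an argument of that kind is what is needed to complete the step; the rank count alone will not do it.
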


\begin{lemma} \label{lem_NIL_for_inner_automorphism} Let $\phi \colon K \to K$
  be an inner automorphisms of the group $K$.  Then there is for all $n \in \IZ$
  an isomorphism
\[
\NK_n(RK, R\phi) \xrightarrow{\cong} \NK_n(RK).
\]
\end{lemma}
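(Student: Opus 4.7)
The plan is to construct an explicit ring isomorphism $RK[s]\xrightarrow{\cong} RK_\phi[t]$ intertwining the evaluation-at-zero augmentations; once this is done, taking kernels in $K$-theory will yield the result immediately.

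First I will fix $g\in K$ with $\phi(k) = g k g\inv$ for all $k\in K$, which exists since $\phi$ is inner. The key observation is that $g\inv t\in RK_\phi[t]$ commutes with every $k\in K$:
\[(g\inv t)\cdot k \;=\; g\inv \phi(k) t \;=\; g\inv \cdot gkg\inv \cdot t \;=\; k\cdot (g\inv t).\]
Consequently the assignment $s\mapsto g\inv t$, together with the identity on $RK$, extends uniquely to a ring homomorphism $\Psi\colon RK[s]\to RK_\phi[t]$. A short induction using $tg\inv = \phi(g\inv)t = g\inv t$ gives $\Psi(s^i) = g^{-i} t^i$; since $\{g^{-i}t^i\}_{i\ge 0}$ differs from the standard left $RK$-basis $\{t^i\}_{i\ge 0}$ of $RK_\phi[t]$ only by multiplication with units of $RK$, it is again a basis, so $\Psi$ is a ring isomorphism (with inverse given by $t\mapsto gs$).

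By construction $\Psi$ intertwines the augmentations: $\ev_K\circ\Psi$ sends $s\mapsto 0$ and is the identity on $RK$, so it agrees with the augmentation $\ev\colon RK[s]\to RK$. Applying non-connective $K$-theory will then produce a commutative diagram
\[\xymatrix@!C=8em{K_n(RK[s]) \ar[r]^-{\Psi_*}_-{\cong} \ar[d]_{\ev_*} & K_n(RK_\phi[t]) \ar[d]^{(\ev_K)_*} \\
K_n(RK) \ar@{=}[r] & K_n(RK)}\]
whose vertical arrows are split surjective (via the inclusions of constants), and taking kernels vertically will yield the desired isomorphism $\NK_n(RK)\xrightarrow{\cong} \NK_n(RK,R\phi)$. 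There is no real obstacle here: the entire content reduces to the classical fact that an inner twist can be absorbed by a linear change of variable. The only care required is with left/right conventions in the twisted polynomial ring and with checking that the augmentations correspond, both of which are immediate from the defining relation $tk=\phi(k)t$.
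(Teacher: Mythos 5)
Your proof is correct and is essentially the paper's argument seen from the other side: the paper writes down the ring isomorphism $\eta\colon RK_{R\phi}[t]\to RK[t]$, $\sum_i\lambda_i t^i\mapsto\sum_i\lambda_i k^i t^i$ (with $\phi$ conjugation by $k$), whereas you construct its inverse $RK[s]\to RK_\phi[t]$, $s\mapsto g^{-1}t$, by noticing that $g^{-1}t$ is central; both intertwine the evaluation-at-zero maps, and hence induce the isomorphism on $\NK_n$ by taking kernels.
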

\begin{proof}
Let $k$ be an element such that $\phi$ is given by conjugation with $k$.
We obtain a ring isomorphism
\[
\eta \colon RK_{R\phi}[t] \xrightarrow{\cong} RK[t], 
\quad \sum_i \lambda_i t^i \mapsto \lambda_i k^i t^i.
\]
Let $\ev_{RK,\phi} \colon RK_{\phi}[t] \to RK$ and $\ev_{RK} \colon RK[t] \to RK$ 
be the ring homomorphisms given by putting $t = 0$. Then we obtain a
commutative diagram with isomorphisms as vertical arrows
\[
\xymatrix{K_n( RK_{R\phi}[t]) \ar[r]^{\eta}_{\cong} \ar[d]^{\ev_{RK,\phi}}
&
K_n(RK[t]) \ar[d]^{\ev_{RK}}
\\
K_n(RK) \ar[r]^{\cong}_{\id}
&
K_n(RK)
}
\]
It induces the desired isomorphism 
$\NK_n(RK, R\phi) \xrightarrow{\cong} \NK_n(RK)$.
\end{proof}

\begin{remark*}
As the referee has pointed out, this results holds more generally (with identical proof) for the twisted Bass' group $NF(S,\phi)$ of any functor $F$ from rings to abelian groups and any inner ring automorphism $\phi\colon S\to S$. 
\end{remark*}

\begin{theorem}\label{the:vanishing_of_widetilde(NK)(RK,Rphi)_for_finite_K_rationally}
  Let $R$ be a regular ring. Let $K$ be a finite group of order $r$ and let
  $\phi \colon K \xrightarrow{\cong} K$ be an automorphism of order $s$.

  Then $\NK_n(RK,R\phi)[1/rs] = 0$ for all $n \in \IZ$. In particular $\NK_n(RK,R\phi) \otimes_{\IZ}
  \IQ = 0$ for all $n \in \IZ$.  
\end{theorem}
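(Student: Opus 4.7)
The plan is to reduce the twisted $\NK$-group $\NK_n(RK,R\phi)$ first to the untwisted $\NK$-group of a larger but still finite group, and then to apply the fundamental theorem of algebraic $K$-theory after inverting the group order. The two key tools are already in place: Lemma~\ref{lem:induction_and_restriction_on_widetilde(NK)} lets me compare $\NK$-groups along a finite-index inclusion of group pairs, and Lemma~\ref{lem_NIL_for_inner_automorphism} collapses inner twists to the ordinary $\NK$-term.

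Concretely, I would set $L := K \rtimes_{\phi} \langle t\rangle$, where $t$ is a formal generator of order $s$ acting on $K$ via $\phi$. Then $L$ is a finite group of order $rs$, the inclusion $i\colon K\hookrightarrow L$ has index $s$, and the automorphism $\widetilde\phi$ of $L$ given by conjugation with $t$ restricts to $\phi$ on $K$ and is, by construction, inner on $L$. The pair $(\phi,\widetilde\phi)$ satisfies the hypothesis of Lemma~\ref{lem:induction_and_restriction_on_widetilde(NK)} with $l=s$, so the composite
\[
\NK_n(RK,R\phi) \xrightarrow{i_*} \NK_n(RL,R\widetilde\phi) \xrightarrow{i^*} \NK_n(RK,R\phi)
\]
is multiplication by $s$. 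Hence after inverting $s$, $\NK_n(RK,R\phi)[1/s]$ becomes a direct summand of $\NK_n(RL,R\widetilde\phi)[1/s]$, and by Lemma~\ref{lem_NIL_for_inner_automorphism} the latter is canonically isomorphic to the untwisted group $\NK_n(RL)[1/s]$.

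It then suffices to show $\NK_n(RL)[1/rs]=0$. Passing to the localization $R' := R[1/rs]$, which is still regular, the order $|L|=rs$ is a unit in $R'$, so $R'L$ is a separable extension of $R'$ and regularity is inherited; hence $R'L$ is regular. By the fundamental theorem of algebraic $K$-theory in its non-connective form, $\NK_n(S)=0$ for every regular ring $S$ and every $n\in\IZ$, so $\NK_n(R'L)=0$. Since $K$-theory commutes with filtered colimits and $R'L\cong (RL)[1/rs]$ is the colimit of multiplication by $rs$ on $RL$, one has $\NK_n(RL)[1/rs]\cong \NK_n(R'L)=0$, which combined with the previous paragraph yields $\NK_n(RK,R\phi)[1/rs]=0$. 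The rational statement then follows because $rs$ is a nonzero integer.

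The argument assembles cleanly from the two preparatory lemmas, and the only real content to verify is standard: that the non-connective Bass-Heller-Swan decomposition gives vanishing of $\NK_n$ for regular rings in all degrees $n\in\IZ$ (including negative ones), that $R'L$ is regular whenever $R'$ is regular and $|L|$ is a unit in $R'$, and that localization at a central integer is compatible with $\NK$. None of these is a genuine obstacle in the generality we need; the only subtle point is the extension to negative $K$-theory, which is handled by the same non-connective $K$-theory machinery invoked earlier in Section~\ref{sec:The_twisted_Bass-Heller-Swan_Theorem_for_additive_categories}.
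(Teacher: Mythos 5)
Your setup is exactly the one the paper uses: you form $L = K\rtimes_\phi \IZ/s$, note the inclusion $i\colon K\hookrightarrow L$ has index $s$ and that conjugation by $t$ restricts to $\phi$ on $K$ while being inner on $L$, and then apply Lemma~\ref{lem:induction_and_restriction_on_widetilde(NK)} and Lemma~\ref{lem_NIL_for_inner_automorphism} to realize $\NK_n(RK,R\phi)[1/s]$ as a direct summand of $\NK_n(RL)[1/s]$. Up to this point you have reproduced the paper's argument essentially verbatim, and the remaining task, namely showing $\NK_n(RL)[1/rs]=0$, is exactly where the paper turns to~\cite[Theorem~A]{Hambleton-Lueck(2012)}.

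Your attempt to replace that citation by a direct argument has a genuine gap. You pass to $R'=R[1/rs]$, observe (correctly) that $R'L$ is regular since $|L|$ is invertible and $R'L$ is separable over the regular ring $R'$, and conclude $\NK_n(R'L)=0$. That much is fine. But you then assert $\NK_n(RL)[1/rs]\cong \NK_n(R'L)$ ``since $K$-theory commutes with filtered colimits and $R'L\cong (RL)[1/rs]$ is the colimit of multiplication by $rs$ on $RL$.'' This does not work: multiplication by $rs$ on $RL$ is not a ring homomorphism (it does not preserve the unit), so $(RL)[1/rs]$ is \emph{not} a filtered colimit of copies of $RL$ in the category of rings, and continuity of $K$-theory does not apply. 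What you would need is a comparison $\NK_n(A)\otimes_\IZ\IZ[1/m]\xrightarrow{\ \cong\ }\NK_n(A[1/m])$ for a central integer $m$; this is not a formal consequence of anything you have quoted, and it is precisely the kind of statement whose proof requires real input (Witt-vector module structures on $\NK_*$ \`a la Weibel, a localization sequence analysis, or Frobenius/induction machinery). This is the content that~\cite[Theorem~A]{Hambleton-Lueck(2012)} supplies and that the paper leans on; their proof proceeds by hyperelementary induction for the Mackey/Frobenius functor $\NK_*(R[-])$, not by base-changing the coefficient ring. So the overall structure of your proof is right, but the step from ``$\NK_n(R'L)=0$'' to ``$\NK_n(RL)[1/rs]=0$'' is unjustified as written and cannot be fixed by the colimit reasoning you propose.
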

\begin{proof} Let $t$ be a generator of the cyclic group $\IZ/s$ of order $s$.
  Consider the semi-direct product $K \rtimes_{\phi} \IZ/s$. Let $i \colon K \to
  K \rtimes_{\phi} \IZ/s$ be the canonical inclusion. Let $\psi$ be the inner
  automorphism of $K \rtimes_{\phi} \IZ/s$ given by conjugation with $t$. Then
  $[K \rtimes_{\phi} \IZ/s:K] = s$ and the following diagram commutes
\[
\xymatrix{K \ar[r]^{\phi} \ar[d]_i
& K \ar[d]^i
\\
K \rtimes_{\phi} \IZ/s \ar[r]_{\psi} 
&
K \rtimes_{\phi} \IZ/s
}
\]
Lemma~\ref{lem:induction_and_restriction_on_widetilde(NK)} 
and Lemma~\ref{lem_NIL_for_inner_automorphism} yield maps
$i_* \colon \NK_n(RK,\phi) \to NK_n(R[K \rtimes_{\phi} \IZ/s])$
and
$i^* \colon NK_n(R[K \rtimes_{\phi} \IZ/s]) \to \NK_n(RK,\phi)$
such that $i^* \circ i_* = s \cdot \id$. This implies that 
$\NK_n(RK,\phi)[1/s]$ is a direct summand in 
$NK_n(R[K \rtimes_{\phi} \IZ/s])[1/s]$.
Since $R$ is regular by assumption and hence $\NK_n(R)$ vanishes for all $n \in \IZ$,
we conclude from~\cite[Theorem~A]{Hambleton-Lueck(2012)}
\[
\NK_n(R[K \rtimes_{\phi}\IZ/s])[1/rs] = 0.
\]
(For $R = \IZ$ and some related rings, 
this has already been proved by Weibel~\cite[(6.5), p.~490]{Weibel(1981)}.)
This implies
$\NK_n(RK,\phi)[1/rs] = 0$.
\end{proof}

Theorem~\ref{the:vanishing_of_widetilde(NK)(RK,Rphi)_for_finite_K_rationally}
has already been proved for $R = \IZ$
in~\cite[Theorem~5.11]{Grunewald(2008Nil)}.

Now we are ready to give the proof of Theorem~\ref{the:relativ_assembly_Fin_to_VCyc_rational_bijective_K-theory}.
\begin{proof}[Proof of Theorem~\ref{the:relativ_assembly_Fin_to_VCyc_rational_bijective_K-theory}]
Because of Theorem~\ref{the:Splitting_the_K-theoretic_assembly_map_from_Fin_to_VCyc} it suffices
to prove for all $n \in \IZ$
\[
H_n^G\bigl(\underline{E}G \to \EGF{G}{\VCyc_I};\bfNK^G_R\bigr) \otimes_{\IZ} \IQ
\xrightarrow{\cong} \{0\}.
\]
There is a spectral sequence converging to 
$H_{p+q}^G\bigl(\underline{E}G \to \EGF{G}{\VCyc_I};\bfNK^G_R\bigr)$
whose $E^2$-term is the Bredon homology 
\[
E^2_{p,q} = H_p^{\IZ\OrGF{G}{\VCyc_I}}\bigl(\underline{E}G \to \EGF{G}{\VCyc_I};\pi_q(\bfNK^G_R)\bigr)
\]
with coefficients in the covariant functor from $\OrGF{G}{\VCyc_I}$ to the category of
$\IZ$-modules coming from composing $\bfNK^G_R \colon \OrGF{G}{\VCyc_I}
\to \Spectra$ with the functor taking the $q$-homotopy group
(see~\cite[Theorem~4.7 and Theorem~7.4]{Davis-Lueck(1998)}).  Since $\IQ$ is flat over $\IZ$, it
suffices to show for all $V \in \VCyc_I$
\[\pi_q(\bfNK^G_R(G/V))\otimes_{\IZ} \IQ = 0
\]
If $V$ is finite, $\bfNK^G_R(G/V)$ is by construction the trivial
spectrum and the claim is obviously true.  If $V$ is a virtually cyclic group of
type~I, then we conclude from the diagram~\eqref{diagram_reducing_to_groups_ev}
\[
\pi_n(\bfNK^G_R(G/V)) \cong \NK_n(RK_V,R\phi) \oplus \NK_n(RK_V,R\phi^{-1}).
\]
Now the claim follows from Theorem~\ref{the:vanishing_of_widetilde(NK)(RK,Rphi)_for_finite_K_rationally}.
\end{proof}


\section{On the computation of the relative term}
\label{sec:On_the_computation_of_the_relative_term}

In this section we give some further information about the computation of the relative term
$H_n^G\bigl(\underline{E}G \to \underline{\underline{E}}G;\bfK_R^G\bigr) 
\cong H_n^G\bigl(\underline{E}G \to \EGF{G}{\VCyc};\bfNK_R^G\bigr)$.

In~\cite{Lueck-Weiermann(2012)} one can find a systematic analysis how the space
$\EGF{G}{\VCyc_I}$ is obtained from $\underline{E}G$. We say that $G$ satisfies
the condition $(M_{\Fin \subseteq \VCyc_I})$ if any virtually cyclic subgroup of
type~I is contained in a unique maximal infinite cyclic subgroup of type~I. 
We say that $G$ satisfies the condition $(N\!M_{\Fin
  \subseteq \VCyc_I})$ if it satisfies $(M_{\Fin \subseteq \VCyc_I})$ and for
any maximal virtually cyclic subgroup $V$ of type $I$ its normalizer $N_GV$
agrees with $V$. Every word hyperbolic group satisfies $(N\!M_{\Fin  \subseteq \VCyc_I})$,
see~\cite[Example~3.6]{Lueck-Weiermann(2012)}.

Suppose that $G$ satisfies $(M_{\Fin \subseteq \VCyc_I})$. Let $\calm$ be a
complete system of representatives $V$ of the conjugacy classes of maximal
virtually cyclic subgroups of type~I.  Then we conclude
from~\cite[Corollary~2.8]{Lueck-Weiermann(2012)} that there exists a $G$-pushout
of $G$-$CW$-complexes with inclusions as horizontal maps
\[
\xymatrix{\coprod_{V\in\calm} G\times_{N_GV}\underline{E}N_GV
\ar[d]^{\coprod_{V\in\calm} \id_G\times f_V} \ar[r]^-i & \underline{E}G\ar[d]^f \\
\coprod_{V\in\calm} G\times_{N_GV}\EGF{N_GV}{\VCyc_I} \ar[r] & \EGF{G}{\VCyc_I}}
\]
This yields for all $n \in \IZ$ an isomorphism using the induction structure in the sense 
of~\cite[Section~1]{Lueck(2002b)}
\[
\bigoplus_{V \in \calm}
H_n^{N_GV}\bigl(\underline{E}N_GV \to  \EGF{N_GV}{\VCyc_I};\bfK^{N_GV}_R\bigr)
\xrightarrow{\cong}
H_n^G\bigl(\underline{E}G \to \EGF{G}{\VCyc_I};\bfK_R^G\bigr).
\]
Combining this with Theorem~\ref{the:Splitting_the_K-theoretic_assembly_map_from_Fin_to_VCyc}
yields the isomorphism
\[
\bigoplus_{V \in \calm}
H_n^{N_GV}\bigl(\underline{E}N_GV \to  \EGF{N_GV}{\VCyc_I};\bfNK^{N_GV}_R\bigr)
\xrightarrow{\cong}
H_n^G\bigl(\underline{E}G \to \EGF{G}{\VCyc_I};\bfK_R^G\bigr).
\]

Suppose now that $G$ satisfies $(N\!M_{\Fin \subseteq \VCyc_I})$ and recall that, by definition, $NK^G_R(V/H)=0$ for finite $H$. Then the
isomorphism above reduces to the isomorphism
\[
\bigoplus_{V \in \calm} \pi_n\bigl(\bfNK^{V}_R(V/V)\bigr)
\xrightarrow{\cong}
H_n^G\bigl(\underline{E}G \to \EGF{G}{\VCyc_I};\bfK_R^G\bigr),
\]
and $\pi_n\bigl(\bfNK^V_R(V/V)\bigr)$ is the Nil-term
$\NK_n(RK_V,R\phi) \oplus \NK_n(RK_V;R\phi^{-1})$ appearing in the twisted
version of the Bass-Heller-Swan-decomposition of $RV$ 
(see~\cite[Theorem~2.1 and Theorem~2.3]{Grayson(1988)}) if we write 
$V \cong K_V \rtimes_{\phi} \IZ$.


\section{Fibered version}
\label{sec:fibered_version}

We illustrate in this section by an example which will be crucial in~\cite{Farrell-Lueck-Steimle(2015)}
that we do get information from our setting also in a fibered situation.

Let $p \colon X \to B$ be a map of path connected spaces. We will assume 
that it is $\pi_1$-surjective, i.e., induces an epimorphism on fundamental groups.
Suppose that $B$ admits a universal covering $q \colon \widetilde{B} \to B$.

Choose base points $x_0 \in X$, $b_0 \in B$ and $\widetilde{b_0} \in \widetilde{B}$
satisfying $p(x_0) = b_0 = q(\widetilde{b_0})$. We will abbreviate $\Gamma = \pi_1(X,x_0)$ and
$G = \pi_1(B,b_0)$. Recall that we have a free right proper $G$-action on $\widetilde{B}$
and $q$ induces a homeomorphism $\widetilde{B}/G \xrightarrow{\cong} B$. For 
a subgroup $H \subseteq G$ denote by $q(G/H) \colon \widetilde{B} \times_{G} G/H = \widetilde{B}/H \to B$
the obvious covering induced by $q$. The pullback construction yields a commutative square of spaces
\[
\xymatrix@!C= 7em{
X(G/H) \ar[r]^-{\overline{q}(G/H)} \ar[d]_{\overline{p}(G/H)}
&
X \ar[d]^{p}
\\
\widetilde{B} \times_{G} G/H \ar[r]_-{q(G/H)}
&
B
}
\]
where $\overline q(G/H)$ is again a covering. 
This yields covariant functors from the orbit category of $G$ to the category of topological spaces
\begin{eqnarray*}
\underline{B}  \colon \Or(G) & \to & \Spaces, \quad G/H \to \widetilde{B} \times_{G} G/H;
\\
\underline{X} \colon \Or(G)  & \to & \Spaces, \quad G/H \to X(G/H).
\end{eqnarray*}
The assumption that $p$ is $\pi_1$-surjective ensures that $X(G/H)$ is path connected
for all $H \subseteq G$. 

By composition with the fundamental groupoid functor we obtain a functor
\[
\underline{\Pi(X)} \colon \Or(G) \to \Groupoids, \quad G/H \mapsto \Pi(X(G/H))
\]
Let $\FGF{R}$ be the additive category whose set of objects is $\{R^n\mid n =
0,1,2,\ldots\}$ and whose morphisms are $R$-linear maps. In the sequel it will
always be equipped with the trivial $G$ or $\Gamma$-action or considered as
constant functor $\calg \to \addcat$. Consider the functor
\[
\xi \colon \Groupoids \to \Spectra, \quad \calg \mapsto \bfK\left(\intgf{\calg}{\FGF{R}}\right).
\]
The composite of the last two functors yields a functor
\[
\bfK(p):= \xi  \circ \underline{\Pi(X)}  \colon \Or(G) \to \Spectra.
\]

Associated to this functor there is a $G$-homology theory 
$H_*^G(-;\bfK(p)) := \pi_n\bigl(O^G(-) \wedge_{\Or(G)} \bfK(p)\bigr)$ 
(see~\cite{Davis-Lueck(1998)}). We will be interested in the associated assembly map 
induced by the projection 
$\underline{\underline{E}}G \to G/G$,
\begin{eqnarray}
& H_n^G\bigl(\underline{\underline{E}}G;\bfK(p)\bigr) 
\to H_n^G\bigl(G/G;\bfK(p)\bigr) \cong K_n(R\Gamma).&
\label{fibered_assembly_map}
\end{eqnarray}

The goal of this section is to identify this assembly map with the assembly map
\[H_n^G\bigl(\underline{\underline{E}}G;\bfK_\cala\bigr) 
\to H_n^G\bigl(G/G;\bfK_\cala\bigr) = K_n(R\Gamma)\]
for a suitable additive category with $G$-action $\cala$. Thus the results of 
this paper apply also in the fibered setup.

Consider the following functor
\[\underline{\calg^{\Gamma}}  \colon \Or(G)  \to  \Groupoids, \quad G/H \to \calg^{\Gamma}(G/H),\]
where we consider $G/H$ as a $\Gamma$-set by restriction along the group homomorphism $\Gamma\to G$ induced by $p$.

\begin{lemma} \label{lem:natural_equivalence_T_calg_1_to_calg_2}
There is a natural equivalence
\[
T \colon \underline{\calg^{\Gamma}}  \to \underline{\Pi(X)}
\]
of covariant functors $\Or(G)   \to  \Groupoids$.
\end{lemma}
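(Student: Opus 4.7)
The plan is to define $T(G/H)$ via the monodromy of the covering $\overline{q}(G/H) \colon X(G/H) \to X$, and to deduce the equivalence from the standard fact that for a pointed path-connected space $Y$ with fundamental group $\pi$, the fundamental groupoid of a $\pi$-covering is equivalent to the transport groupoid of the corresponding $\pi$-set.

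First I would identify the fibers. By construction of the pullback, the fiber of $\overline{q}(G/H)$ over $x_0$ consists of pairs $(x_0, \widetilde{b}H)$ with $q(\widetilde{b}) = b_0$. Since every such $\widetilde{b}$ is of the form $\widetilde{b_0} \cdot g$ for a unique $g \in G$, this fiber is canonically identified with $G/H$ via $gH \leftrightarrow (x_0, \widetilde{b_0} g H)$; write $\widetilde{x}_0(gH)$ for the point corresponding to $gH$. Now define $T(G/H) \colon \calg^{\Gamma}(G/H) \to \Pi(X(G/H))$ on objects by $gH \mapsto \widetilde{x}_0(gH)$, and on morphisms by sending $\gamma \colon g_1H \to g_2H$ to the path-homotopy class of the unique lift of a representing loop for $\gamma$ at $x_0$ starting at $\widetilde{x}_0(g_1 H)$. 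Standard covering space theory (compatibility of the monodromy action on the fiber with the $\Gamma$-action on $G/H$ through $p_\ast$) guarantees that this lift ends at $\widetilde{x}_0(g_2H)$, that it is independent of the choice of representing loop, and that $T(G/H)$ is a functor.

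Next, I would check that $T(G/H)$ is an equivalence of groupoids. Essential surjectivity follows since $X$ is path-connected and the covering has the path lifting property, so every point of $X(G/H)$ is connected by a path to some point of the fiber. For full faithfulness: the automorphism group of $\widetilde{x}_0(gH)$ in $\Pi(X(G/H))$ is, by covering theory, the subgroup of $\Gamma$ consisting of those $\gamma$ whose lift starting at $\widetilde{x}_0(gH)$ is a loop, i.e., those $\gamma$ whose image under $p_\ast$ stabilizes $gH$, which coincides with the endomorphism set of $gH$ in $\calg^{\Gamma}(G/H)$; a similar argument identifies general Hom-sets.

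Finally, naturality in $G/H$ needs to be verified. For a $G$-map $f \colon G/H \to G/K$, the pullback construction provides a map $X(f) \colon X(G/H) \to X(G/K)$ over $X$ which fits into a commutative square with the fiber inclusions, and which sends $\widetilde{x}_0(gH)$ to $\widetilde{x}_0(f(gH))$. Since path lifting is natural with respect to maps of coverings, the induced diagram
\[
\xymatrix@!C=8em{
\calg^{\Gamma}(G/H) \ar[r]^-{T(G/H)} \ar[d]_{\calg^{\Gamma}(f)} & \Pi(X(G/H)) \ar[d]^{\Pi(X(f))} \\
\calg^{\Gamma}(G/K) \ar[r]_-{T(G/K)} & \Pi(X(G/K))
}
\]
commutes, proving naturality. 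The main technical obstacle is bookkeeping the conventions for left versus right actions of $G$ on $\widetilde{B}$ and the induced $\Gamma$-action on $G/H$ through $p_\ast$ so that the chosen identification of the fiber with $G/H$ is genuinely $\Gamma$-equivariant and $G$-natural in $G/H$; once this is pinned down, the remaining arguments are standard.
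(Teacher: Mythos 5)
Your proposal is correct and takes essentially the same approach as the paper: define $T(G/H)$ on objects via the basepoint $(\widetilde{b_0}, wH, x_0)$ in the fiber of $X(G/H) \to X$ over $x_0$, and on morphisms by lifting representing loops through this covering; then deduce the equivalence from the standard covering-space identification and check naturality of lifting. The paper merely spells out the lift explicitly using the pullback description of $X(G/H)$ (lifting $p \circ u_X$ to $\widetilde{B}\times_G G/H$ and pairing with $u_X$ itself), while you invoke the abstract path-lifting property of the covering $\overline{q}(G/H)$ — this is the same construction phrased differently.
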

\begin{proof}
  Given an object $G/H$ in $\Or(G)$, we have to specify an equivalence  of groupoids
  $T(G/H) \colon   \calg^{\Gamma}(G/H) \to \Pi(X(G/H))$.  
   For an object in $\calg^{\Gamma}( G/H)$
  which is given by an element $wH \in G/H$,  define
  $T(wH)$ to be the point in $X(G/H)$ which is determined by $(\widetilde{b_0},wH) \in
  \widetilde{B}\times_{G} G/H$ and $x_0 \in X$. This makes sense since
  $q(G/H)\bigl((\widetilde{b_0},wH)\bigr) = b_0 = q(x_0)$.  

   Let $\gamma \colon w_0H \to  w_1H$ be a morphism in $\calg^{\Gamma}(G/H)$.  
   Choose a loop $u_X$ in $X$ at $x_0 \in X$ which represents $\gamma$. 
   Let $u_B$ be the loop $p \circ u_X$ in $B$ at $b_0 \in B$. 
    There is precisely one path $u_{\widetilde{B}}$ in $\widetilde{B}$ which starts at
    $\widetilde{b_0}$ and satisfies $q \circ u_{\widetilde{B}} = u_B$.  Let $[u_B] \in G$ be the class of
    $u_B$, or, equivalently, the image of $\gamma$ under $\pi_1(p,x_0) \colon \Gamma \to G$.
    By definition of the right $G$-action on $\widetilde{B}$ we have
    $\widetilde{b_0} \cdot [u_B] = u_B(1)$.  Define a path $u_{\widetilde{B}/H}$ in
    $\widetilde{B}\times_{G} G/H$ from $(\widetilde{b_0},w_0H)$ to $(\widetilde{b_0},w_1H)$ by 
    $t \mapsto (u_B(t),w_0H)$. This is indeed a path ending at $(\widetilde{b_0},w_1H)$ since
    $(\widetilde{b_0} \cdot [u_B], w_0H) = (\widetilde{b_0}, [u_B] \cdot w_0H) =
    (\widetilde{b_0},w_1H)$ holds in $\widetilde{B}\times_{G} G/H$. 
    Obviously the composite of $u_{\widetilde{B}/H}$ with
    $q(G/H) \colon \widetilde{B} \times_{G} G/H \to B$ is $u_B$. Hence
    $u_{\widetilde{B}/H}$ and $u_X$ determine a path in $X(G/H)$ from $T(w_0H) \to
    T(w_1H)$ and hence  a morphism $T(w_0H) \to T(w_1H)$  in $\Pi(X(G/H))$.
    One easily checks that the homotopy class (relative to the endpoints) of $u$ depends
    only on $\gamma$.  Thus we obtain the desired functor $T(G/H) \colon
    \calg^{\Gamma}(G/H) \to \Pi(X(G/H))$.  One easily checks that they fit together
    so that we obtain a natural transformation 
    $T \colon \underline{\calg^{\Gamma}} \to \underline{\Pi(X)}$.

    At a homogeneous space $G/H$, the value of $\underline{\calg^\Gamma}$ is a
groupoid equivalent to the group $\pi_1(p,x_0)\inv(H)$ while the value of
$\underline{\Pi(X)}$ is a groupoid equivalent to the fundamental group of $X(G/H)$.
Up to this equivalence, the functor $T$, at $G/H$, is the standard identification of these two groupoids. 
Hence $T$ is a natural equivalence.
  \end{proof}

We obtain a covariant functor
\begin{eqnarray*}
\bfK(p)' \colon \Or(G) & \to & \Spectra, \quad G/H \mapsto \bfK\left(\intgf{\calg^\Gamma(G/H)}{\FGF{R}}\right).
\end{eqnarray*}
Lemma~\ref{lem:natural_equivalence_T_calg_1_to_calg_2} implies that the
following diagram commutes where the vertical arrow is the isomorphism induced by $T$.
\[
\xymatrix@!C= 20em{
H_n^G\bigl(\underline{\underline{E}}G;\bfK(p)\bigr) \ar[rd]^-{H_n^G(\pr;\bfK(p))}
&
\\
& H_n^G\bigl(G/G;\bfK(p)\bigr) = K_n(R\Gamma)
\\
H_n^G\bigl(\underline{\underline{E}}G;\bfK(p)'\bigr) \ar[ru]_-{H_n^G(\pr;\bfK(p)')}
\ar[uu]_{\cong}^{T_*} &
}
\]
Now the functor $\bfK(p)'$ is, up to natural equivalence, of the form $\bfK^G_{\cala}$ 
for some additive $G$-category, namely for
$\cala=\ind_{q\colon \Gamma \to G} \FGF{R}$, see~\cite[(11.5) and Lemma~11.6]{Bartels-Lueck(2009coeff)}. 
We conclude

\begin{lemma} \label{lem:fibered_versu_additive_category}
The assembly map~\eqref{fibered_assembly_map} 
is an isomorphism for all $n \in \IZ$, if the $K$-theoretic Farrell-Jones Conjecture for
additive categories holds for $G$. 
\end{lemma}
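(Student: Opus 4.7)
The plan is to reduce the claim, via two successive identifications of $\Or(G)$-spectra, to the $K$-theoretic Farrell-Jones Conjecture for the pair $(G,\cala)$ with $\cala = \ind_{q\colon\Gamma\to G}\FGF{R}$, which is exactly the hypothesis of the lemma.

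First I would promote Lemma~\ref{lem:natural_equivalence_T_calg_1_to_calg_2} from an equivalence $T\colon \underline{\calg^{\Gamma}}\to \underline{\Pi(X)}$ of $\Or(G)$-diagrams of groupoids to a natural weak equivalence $\xi(T)\colon \bfK(p)' \xrightarrow{\simeq} \bfK(p)$ of $\Or(G)$-spectra, by post-composing with the functor $\xi$. Here the key technical input is Lemma~\ref{lem:(F_1)_ast_and_int_calg_S_and_equivalences}\ref{lem:(F_1)_ast_and_int_calg_S_and_equivalences:(F_1)_ast}, which ensures that an equivalence of groupoids induces an equivalence of the corresponding homotopy-colimit categories, so after passing to non-connective $K$-theory spectra we still have a weak equivalence. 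A natural weak equivalence of $\Or(G)$-spectra induces an isomorphism of the associated $G$-homology theories, and the commutative triangle displayed immediately before the lemma shows that the two assembly maps correspond under this isomorphism. Thus it suffices to prove that the assembly map for $\bfK(p)'$ is an isomorphism.

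Second, I would invoke the identification $\bfK(p)'\simeq \bfK^G_{\cala}$ for $\cala = \ind_q \FGF{R}$, recorded in the sentence immediately preceding the lemma and proved in~\cite[(11.5) and Lemma~11.6]{Bartels-Lueck(2009coeff)}. Under this identification the assembly map for $\bfK(p)'$ becomes precisely
\[
H_n^G\bigl(\underline{\underline{E}}G;\bfK^G_{\cala}\bigr) \to H_n^G\bigl(G/G;\bfK^G_{\cala}\bigr) \cong K_n(R\Gamma),
\]
which is the assembly map predicted by the Farrell-Jones Conjecture for $G$ with coefficients in the additive $G$-category $\cala$. By assumption this is an isomorphism, and concatenating the two identifications yields the claim.

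The real conceptual obstacle has been cleared beforehand: constructing the natural equivalence $T$ in Lemma~\ref{lem:natural_equivalence_T_calg_1_to_calg_2}, which requires a careful treatment of basepoints, the right $G$-action on $\widetilde{B}$ given by path lifting, and verification that $T$ is natural in the orbit category variable; and the identification $\bfK(p)'\simeq \bfK^G_\cala$ for induced coefficients carried out in~\cite{Bartels-Lueck(2009coeff)}. With those two inputs, the argument for the lemma is purely formal bookkeeping of $G$-equivariant homology theories and assembly maps.
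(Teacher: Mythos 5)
Your proposal matches the paper's argument: the paper likewise uses the natural equivalence $T$ from Lemma~\ref{lem:natural_equivalence_T_calg_1_to_calg_2} (via the functor $\xi$ and, implicitly, Lemma~\ref{lem:(F_1)_ast_and_int_calg_S_and_equivalences}~\ref{lem:(F_1)_ast_and_int_calg_S_and_equivalences:(F_1)_ast}) to produce the commuting triangle identifying the two assembly maps, and then invokes the identification $\bfK(p)'\simeq\bfK^G_{\cala}$ for $\cala=\ind_{q\colon\Gamma\to G}\FGF{R}$ from~\cite[(11.5) and Lemma~11.6]{Bartels-Lueck(2009coeff)} to reduce to the hypothesis. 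The only difference is that you spell out the appeal to Lemma~\ref{lem:(F_1)_ast_and_int_calg_S_and_equivalences}, which the paper leaves implicit; the substance is the same.
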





\end{document}